\numberwithin{equation}{section}
\newtheorem{Theorem}{Theorem}[section]
\newtheorem{Lemma}[Theorem]{Lemma}
\newtheorem{Proposition}[Theorem]{Proposition}
\newtheorem{Conjecture}[Theorem]{Conjecture}
\newtheorem{Question}[Theorem]{Question}
\newtheorem{Claim}[Theorem]{Claim}
 { \theoremstyle{definition}
\newtheorem{Example}[Theorem]{Example}
\newtheorem{Remark}[Theorem]{Remark} }
 \DeclareMathOperator{\diam}{diam}
 \newcommand{\Rm}{\operatorname{\bf Rm}}
 \newcommand{\Rc}{\operatorname{\bf Rc}}
 \newcommand{\Sc}{\mathbf{R}}
\newcommand{\rank}{\operatorname{rank}}
\begin{document}
\allowdisplaybreaks

\newcommand{\arXivNumber}{2008.12419}

\renewcommand{\thefootnote}{}

\renewcommand{\PaperNumber}{123}

\FirstPageHeading

\ShortArticleName{Collapsing Geometry with Ricci Curvature Bounded Below and Ricci Flow Smoothing}

\ArticleName{Collapsing Geometry with Ricci Curvature\\ Bounded Below and Ricci Flow Smoothing\footnote{This paper is a~contribution to the Special Issue on Scalar and Ricci Curvature in honor of Misha Gromov on his 75th Birthday. The full collection is available at \href{https://www.emis.de/journals/SIGMA/Gromov.html}{https://www.emis.de/journals/SIGMA/Gromov.html}}}

\Author{Shaosai HUANG~$^\dag$, Xiaochun RONG~$^\ddag$ and Bing WANG~$^\S$}

\AuthorNameForHeading{S.~Huang, X.~Rong and B.~Wang}

\Address{$^\dag$~Department of Mathematics, University of Wisconsin-Madison, Madison, WI 53706, USA}
\EmailD{\href{mailto:sshuang@math.wisc.edu}{sshuang@math.wisc.edu}}
\URLaddressD{\url{https://sites.google.com/a/wisc.edu/shaosai-huang}}

\Address{$^\ddag$~Department of Mathematics, Rutgers University, New Brunswick, NJ 08854, USA}
\EmailD{\href{mailto:rong@math.rutgers.edu}{rong@math.rutgers.edu}}

\Address{$^\S$~Institute of Geometry and Physics, and School of Mathematical Sciences,\\
\hphantom{$^\S$}~University of Science and Technology of China, Hefei, Anhui Province, 230026, China}
\EmailD{\href{mailto:topspin@ustc.edu.cn}{topspin@ustc.edu.cn}}
\URLaddressD{\url{http://staff.ustc.edu.cn/~topspin/index.html}}

\ArticleDates{Received August 30, 2020, in final form November 23, 2020; Published online November 30, 2020}

\Abstract{We survey some recent developments in the study of collapsing Riemannian manifolds with Ricci curvature bounded below, especially the locally bounded Ricci covering geometry and the Ricci flow smoothing techniques. We then prove that if a Calabi--Yau manifold is sufficiently volume collapsed with bounded diameter and sectional curvature, then it admits a Ricci-flat K\"ahler metric
together with a compatible pure nilpotent Killing structure: this is related to an open question of Cheeger, Fukaya and Gromov.}

\Keywords{almost flat manifold; collapsing geometry; locally bounded Ricci covering geo\-met\-ry; nilpotent Killing structure; Ricci flow}

\Classification{53C21; 53C23; 53E20}

\renewcommand{\thefootnote}{\arabic{footnote}}
\setcounter{footnote}{0}

\section{Introduction}
In the seminal work \cite{Gromov78b}, Gromov discovered a gap phenomenon for the
sectional curvature (denoted by $\mathbf{K}_g$ for a smooth Riemannian metric~$g$) to detect the infranil manifold structure (see also \cite{BK81, Ruh82}):

\begin{Theorem}[Gromov's almost flat manifold theorem, 1978]\label{thm:
Gromov78}
There is a dimensional constant $\varepsilon(m)\in (0,1)$
such that if a closed $m$-dimensional Riemannian manifold $(M,g)$ satisfies
\begin{gather}\label{eqn: AF_Rm}
\diam(M,g)^2\max_{\wedge^2TM} |\mathbf{K}_{g} | \le \varepsilon^2 ,
\end{gather}
then $M$ is diffeomorphic to an infranil manifold.
\end{Theorem}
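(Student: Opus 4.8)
The plan is to rescale the metric so that $|\mathbf{K}_g| \le 1$ and $\diam(M,g) \le \varepsilon$, with $\varepsilon = \varepsilon(m)$ to be fixed small at the end, and then to analyze the deck transformation group $\Gamma = \pi_1(M)$ as a discrete group of isometries of the Riemannian universal cover $\big(\tilde M, \tilde g\big)$. Fixing a basepoint $\tilde p \in \tilde M$, the bound $|\mathbf{K}_g| \le 1$ gives, via Rauch comparison, that $\exp_{\tilde p}$ has no conjugate points within radius $1$ and distorts the metric on $B(\tilde p, 1)$ by a bounded factor; Jacobi field estimates further control how an isometry $\gamma$ close to the identity near $\tilde p$ is encoded by the pair $(v_\gamma, A_\gamma)$, where $v_\gamma := \exp_{\tilde p}^{-1}(\gamma \tilde p)$ and $A_\gamma \in \mathrm{O}(m)$ is the rotational part of $\gamma$ at $\tilde p$, i.e.\ its differential there transported back along the short geodesic from $\gamma \tilde p$ to $\tilde p$.

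The technical core is a \emph{displacement estimate}: there is $C = C(m)$ so that if isometries $\alpha, \beta$ each displace every point of $B(\tilde p, 1)$ by at most $\delta \le \delta_0(m)$, then the commutator $[\alpha, \beta]$ displaces every point of $B\big(\tilde p, \tfrac12\big)$ by at most $C \delta^2$. This follows by comparing the composition of isometries on $\tilde M$ with the corresponding composition in the Euclidean motion group $\mathbb{R}^m \rtimes \mathrm{O}(m)$ and bounding the discrepancy by a constant times curvature times the square of the displacement. Since $\diam(M,g) \le \varepsilon$, a short basis of $\Gamma$ consists of elements of displacement $\le 2\varepsilon$, and the subgroup $\Gamma_\varepsilon$ it generates has index at most a constant $w(m)$ in $\Gamma$; passing to a further controlled finite-index torsion-free subgroup — here the iterated estimate is used to tame both torsion and the absence of small subgroups in the compact rotational directions $\mathrm{O}(m)$ — one may assume $\Gamma = \Gamma_\varepsilon$ is torsion-free. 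Substituting the displacement estimate into the lower central series of $\Gamma$, each successive commutator subgroup is generated by elements whose displacement has dropped by a fixed power of $\varepsilon$; discreteness of $\Gamma$ then forces this chain to reach the identity after at most $m$ steps, so $\Gamma$ is nilpotent, and hence $\pi_1(M)$ is virtually nilpotent.

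It remains to promote ``virtually nilpotent fundamental group'' to ``infranilmanifold''. By Malcev's theorem the finitely generated torsion-free nilpotent group $N := \Gamma_\varepsilon$ embeds as a cocompact lattice in a unique simply connected nilpotent Lie group $\mathcal N$, with $\dim \mathcal N$ equal to the Hirsch length of $N$. One checks this length is exactly $m$: the finite cover $\hat M := \tilde M / N$ is aspherical in the normalized regime (the small-diameter, bounded-curvature geometry forces $\tilde M$ to be diffeomorphic to $\mathbb{R}^m$), hence a $K(N,1)$ of dimension $m$, matching $\dim \mathcal N$. The short basis furnishes exponential coordinates on $\tilde M$ that are $C^0$-close to Malcev coordinates on $\mathcal N$, so the $N$-action on $\tilde M$ is uniformly close to the left-translation action of $N$ on $\mathcal N$; a rigidity argument conjugates one to the other, yielding a diffeomorphism $\hat M \cong \mathcal N / N$. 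Finally $\Gamma / N$ acts on $\hat M$ and, after conjugation into $\mathrm{Aut}(\mathcal N) \ltimes \mathcal N$, exhibits $M \cong \mathcal N / \Gamma$ with $\Gamma \subset \mathcal N \rtimes \mathrm{Aut}(\mathcal N)$ discrete and cocompact — that is, $M$ is an infranilmanifold.

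I expect the main obstacle to be the displacement estimate together with the bookkeeping that converts it into genuine nilpotency of a finite-index torsion-free subgroup: one must simultaneously control torsion and the compact rotational directions, playing the ``no small subgroups'' property of $\mathrm{O}(m)$ against the curvature-driven quadratic contraction of commutators. The subsequent identification of the precise infranil diffeomorphism type is comparatively soft; alternatively, following Ruh, one can replace part of that step by smoothing the Levi-Civita connection to a flat affine connection with parallel torsion and then applying the developing map — an approach that also fits naturally within the Cheeger--Fukaya--Gromov perspective emphasized in this survey.
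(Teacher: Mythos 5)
The paper itself offers no proof of Theorem~\ref{thm: Gromov78}: it is stated as a classical result with references to Gromov, Buser--Karcher and Ruh, and your outline follows precisely that classical route (commutator estimates for isometries of the universal cover, a Margulis-type passage to a bounded-index subgroup, Malcev rigidity or Ruh's flat connection). So the comparison must be with those cited proofs, and against them your sketch has one genuine gap and one circularity.

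The gap is in the passage from the short basis to the hypotheses of your displacement estimate. From $\diam(M,g)\le\varepsilon$ you only get generators $\gamma$ with $d(\gamma\tilde p,\tilde p)\le 2\varepsilon$, i.e.\ small displacement \emph{at the basepoint} (and, as an aside, such elements already generate all of $\pi_1(M)$, so your $\Gamma_\varepsilon$ is $\Gamma$ itself). Your commutator estimate, however, requires isometries that displace every point of $B(\tilde p,1)$ by at most $\delta$, and that hypothesis is equivalent to smallness of \emph{both} the translational part $v_\gamma$ and the rotational part $A_\gamma$. A short loop can perfectly well have holonomy $A_\gamma$ far from the identity; its displacement at distance $1$ from $\tilde p$ is then of order $\|A_\gamma-\mathrm{Id}\|$, not of order $\varepsilon$, and the quadratic contraction $\delta\mapsto C\delta^2$ never gets started. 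The heart of Gromov's proof, and the bulk of Buser--Karcher, is exactly this point: one must show that the elements whose rotational part is smaller than a fixed $\theta(m)$ generate a subgroup of index at most $w(m)$, and that within this subgroup commutation contracts both the translational and the rotational data compatibly; this uses the no-small-subgroups property of $\mathrm{O}(m)$, a packing/pigeonhole count, and a delicate induction. You correctly identify this as ``the main obstacle,'' but the sketch does not supply the argument, so the nilpotency of a bounded-index subgroup --- the crux of the theorem --- is not established as written.

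The circularity is in the identification of the Hirsch length: you argue $\dim\mathcal N=m$ by asserting that the small-diameter, bounded-curvature regime forces $\tilde M$ to be diffeomorphic to $\mathbb{R}^m$, hence $\hat M$ aspherical. No such statement is available a priori (there is no injectivity radius lower bound, and curvature may be positive); asphericity of $M$ is a \emph{consequence} of the theorem, not an input. In the classical treatments the diffeomorphism $\hat M\cong\mathcal N/N$ is built directly from the short basis by a fibration-and-averaging argument, or, as in Ruh's proof, by flattening the connection to one with parallel torsion and using its developing map. Your closing remark that one may substitute Ruh's argument is indeed the correct repair for this final step, but then that construction, not a prior claim that $\tilde M\cong\mathbb{R}^m$, must carry the weight.
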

Here we say that $M$ is an \emph{infranil manifold} if on the universal covering
$\tilde{M}$ of~$M$ there is a flat connection with parallel torsion, defining a~simply connected nilpotent Lie group structure~$N$ on~$\tilde{M}$ such that
$\pi_1(M)$ is a sub-group of $N\rtimes \operatorname{Aut}(N)$ with
$ [\pi_1(M)\colon \pi_1(M)\cap N ]<\infty$ and $\rank \pi_1(M)=m$~-- in the case of Gromov's
almost flat manifold theorem, it is also shown that such index has a uniform
dimensional upper bound $C(m)$.

Ever since its birth, Gromov's almost flat manifold theorem has inspired the
research of Riemannian geometers by two themes of generalizations. One theme
is to find parametrized versions of Theorem~\ref{thm: Gromov78}, as indicated by
Fukaya's fiber bundle theorem~\cite{Fukaya87ld}: if a Riemannian manifold with
bounded diameter and sectional curvature is sufficiently Gromov--Hausdorff close
to a lower dimensional one with bounded geometry, then it is diffeomorphic to
the total space of a smooth family of almost flat manifolds parametrized over
the lower dimensional one (see Theorem~\ref{thm: Fukaya1}). The generalized
version of this theorem (see Theorem~\ref{thm: singular_fibration}), combined
with the more intrinsic approach of Cheeger and Gromov~\cite{CGI, CGII} on
$F$-structures of positive rank, whose existence is equivalent to the existence
of a one-parameter family of Riemannian metrics collapsing with bounded
sectional curvature, nurtured the rich and splendent theory of the collapsing
geometry with bounded sectional curvature, notably the construction (in~\cite{CFG92}) and various applications of the \emph{nilpotent Killing
structure} (see Theorem~\ref{thm: CFG}). This theory,
as we will briefly recall in Section~\ref{section2.1}, is mainly developed through the works of
Cheeger and Gromov \cite{CGI, CGII}, Fukaya \cite{Fukaya87mGH, Fukaya87ld,
Fukaya88, Fukaya89}, and Cheeger, Fukaya and Gromov \cite{CFG92}.

The other theme of generalization focuses on weakening the curvature assumption~(\ref{eqn: AF_Rm}) to lower (Bakry--\'Emery) Ricci curvature bounds, as
examplified by the Colding--Gromov gap theorem: if a \emph{Ricci} almost
non-negatively curved manifold of unit diameter has its first Betti number
equal to its dimension, then the manifold is diffeomorphic to a flat torus
(see Theorem~\ref{thm: CG97}). Obviously, the weaker curvature assumption alone
is insufficient to conclude the infranil manifold structure, and certain extra
assumptions are necessary~-- just as the case of the Colding--Gromov gap
theorem; see also \cite{DWY96, HKRX18, Kapovitch19, PWY99} and Section~\ref{section2.2} for a~brief overview.

The two streams of research become confluent in the study of collapsing
Riemannian manifolds with Ricci curvature bounded below. The exploration in
this field is driven not just by its intrinsic merit of finding weaker
curvature assumptions, but also externally by the study of K\"ahler
geometry and mathematical physics: a major open problem in string theory is the
SYZ conjecture~\cite{SYZ}, which could be formulated as understanding the
collapsing geometry of Ricci-flat K\"ahler metrics: see, e.g., \cite{KS01, Li19,
Tossatti20}; see also \cite{ GTZ13, GTZ16,GW00, HSVZ18} for some examples
concerning such phenomena.

The weaker curvature assumption causes tremendous
difficulties for our understanding, and it is natural to start with some
extra assumptions. For instance, one could impose some extra conditions on
topology such as the first Betti numbers: in \cite{HW20a}, it is shown
by the first- and third-named authors that if a Riemannian manifold with Ricci
curvature bounded below is sufficiently Gromov--Hausdorff close to a lower
dimensional one with bounded geometry, and the difference of their first Betti
numbers is equal to their dimensional difference, then the higher dimensional
manifold is diffeomorphic to a torus bundle over the lower dimensional one
(see Theorem~\ref{thm: HW20a}). This theorem generalizes Fukaya's fiber bundle
theorem and the Colding--Gromov gap theorem simultaneously, in setting of
collapsing Riemannian manifolds with Ricci curvature bounded below -- related
results will be surveyed in Section~\ref{section2.3}.

Since the collapsing phenomenon with bounded sectional curvature is essentially
due to the abundant local symmetry encoded in the local fundamental group, a
very natural condition is to assume that the local universal covering space
around every point is non-collapsing: this is the program initiated by the
second-named author and his collaborators in~\cite{HKRX18} to systematically
investigate those manifolds with locally bounded Ricci covering geometry, which
we will discuss in Section~\ref{section3}.
In fact, the major effort in proving Theorem~\ref{thm: HW20a} is devoted to
decoding the topological information associated with the first Betti numbers
and show that the higher dimensional manifold has locally bounded Ricci
covering geometry.

A key issue in the study of collapsing Riemannian manifolds with Ricci curvature
bounded below is the low regularity of the metric due to the (weaker) Ricci
curvature assumption. This confines our understanding on the finer structures
of the collapsing geometry, and suitable smoothing of the given metric is
usually inevitable -- in Section~\ref{section4}, we will survey the relevant Ricci flow
smoothing techniques for locally collapsing manifolds with Ricci curvature
bounded below.

In fact, the Ricci flow smoothing technique also enhances our understanding on
the classical theory of collapsing with bounded sectional curvature. Cheeger,
Fukaya and Gromov asked in~\cite{CFG92} the following question which remains
open today.
\begin{Question}\label{qst: question}
It is known that given a complete Riemannian manifold $(M,g)$ with
sectional curvature uniformly bounded between $\pm 1$, for any small
$\varepsilon>0$ there is a regular $(\rho, k)$-round metric $g_{\varepsilon}$
and nilpotent Killing structure $\mathfrak{N}$ compatible with
$g_{\varepsilon}$, such that $\left\|g-g_{\varepsilon}\right\|_{C^1}<
\varepsilon$. Now if the initial metric $g$ is assumed to be K\"ahler or
Einstein, can we find $g_{\varepsilon}$ in the same category?
\end{Question}
In the last section of this note, we will prove, based on Ricci flow techniques,
that if a Ricci-flat K\"ahler metric is very collapsed with bounded diameter and
sectional curvature, then the approximating metric compatible with a nilpotent
Killing structure may indeed be found as a~nearby Ricci-flat K\"ahler metric.
Notice that the approximating metric~$g_{\varepsilon}$ obtained from
\cite[Theorem~1.7]{CFG92} is not necessarily K\"ahler or Ricci-flat, but we
manage to evolve it along the Ricci flow to find a desirable one. Since the
Ricci flow respects local isometries, the evolved metrics are compatible with the
original nilpotent Killing structure~$\mathfrak{N}$. We wish our result could
cast some light on the general case of Cheeger, Fukaya and Gromov's open
question.

\section{Collapsing geometry with sectional or Ricci curvature bounds}\label{section2}
In this section we give a short survey of the two directions generalizing
 Theorem~\ref{thm: Gromov78} -- the collapsing geometry with bounded
sectional curvature and the almost flatness characterized by weaker curvature
conditions -- as well as the study of collapsing geometry with only Ricci
curvature bounded below. While there have been comprehensive surveys on
the theory of collapsing geometry \cite{Fukaya06, Rong07}, we still
briefly go through some classical theorems so as to put the study of collapsing
geometry with Ricci curvature bounded below in the historical context.

\subsection{Collapsing with bounded sectional curvature}\label{section2.1}
Gromov's almost flat manifold theorem, when embedded in the framework of the
coarse geometry on the space of all Riemannian manifolds (see \cite{GLP}),
opened a new chapter in the study of Riemannian geometry: the collapsing
geometry of Riemannian manifolds with bounded curvature. In this note, we
consider the following collections of Riemannian manifolds:
\begin{enumerate}\itemsep=0pt
\item[1)] $\mathcal{M}_{\rm Rm}(m,D)$ denoting the collection of $m$-dimensional
Riemannian manifolds with sectional curvature bounded between $\pm 1$,
and diameter bounded from above by $D\ge 1$;
\item[2)] $\mathcal{M}_{\rm Rm}(m,D,v)$ denoting the sub-collection of
$\mathcal{M}_{\rm Rm}(m,D)$ with volume bounded below by $v>0$.
 \end{enumerate}
 Equipped with the Gromov--Hausdorff
 topology, the moduli space $\mathcal{M}_{\rm Rm}(m,D)$ is pre-com\-pact~\cite{GLP}.
 Based on the work of Cheeger (see~\cite{Cheeger70, GreenWu}), the
 sub-collection $\mathcal{M}_{\rm Rm}(m,D,v)$ is not just compact in the
 Gromov--Hausdorff topology, but also has only finitely many diffeomorphism
 classes. On the contrary, if we consider a sequence $\{(M_i,g_i)\}$
 in $\mathcal{M}_{\rm Rm}(m,D)$, then under the Gromov--Hausdorff topology, it is
 possible that $M_i\xrightarrow{\rm GH} N$, for some lower dimensional manifold
 $(N,h)$ in $\mathcal{M}_{\rm Rm}(k,D,v)$ with $k<m$. In this case, we say that
 $\left\{(M_i,g_i)\right\}$ \emph{collapses to}~$N$ \emph{with bounded curvature
 and diameter}. In \cite{Fukaya87ld} it is shown that such situation
 can only occur when $M_i$ are infranil fiber bundles over~$N$.
 \begin{Theorem}[Fukaya's fiber bundle theorem, 1987]\label{thm: Fukaya1}
 Given $D\ge 1$ and $v>0$, there is a~uniform constant $\varepsilon(m,v)\in
 (0,1)$ such that if $(M,g)\in \mathcal{M}_{\rm Rm}(m,D)$ and $(N,h)\in
 \mathcal{M}_{\rm Rm}(k,D,v)$ with $k\le m$ satisfy $d_{\rm GH}(M,N)<\delta$ for some
 $\delta<\varepsilon(m,v)$, then there is a $C^1$ submersion $f\colon M\to N$ such
 that
 \begin{enumerate}\itemsep=0pt
 \item[$1)$] $f$ is an almost Riemannian submersion, i.e., ${\rm e}^{-\Psi_F(\delta|m,v)}|\xi|_g \le
|f_{\ast}\xi|_h\le {\rm e}^{\Psi_F(\delta|m,v)}|\xi|_g$ for any $\xi\perp
\ker f_{\ast}$, with $\Psi_F(\delta|m,v)\in (0,1)$ satisfying $\lim\limits_{\delta
\to \infty} \Psi_F(\delta|m,v)=0$; and
\item[$2)$] the fiber of $f$ is diffeomorphic to an infranil manifold.
 \end{enumerate}
 \end{Theorem}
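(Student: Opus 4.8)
The plan is to construct the submersion $f$ of the statement by a smoothing-plus-averaging procedure, and then to recognize its fibers as almost flat manifolds via Theorem~\ref{thm: Gromov78}. First I reduce to a situation with bounded geometry. Since $|\K_h|\le 1$, $\diam(N,h)\le D$ and $\operatorname{vol}(N,h)\ge v$, Bishop--Gromov gives a lower bound, in terms of $m,D,v$, for the volume of unit balls in $N$, so Cheeger's lemma yields $\operatorname{inj}(N,h)\ge i_0(m,D,v)>0$; after a fixed smoothing of $h$ we may assume $(N,h)$ has $C^\infty$-bounded geometry. Running a short-time Ricci flow on $(M,g)$ (cf.\ Section~\ref{section4}), or applying Abresch's averaging, produces a metric $\tilde g$ with ${\rm e}^{-\Psi(\delta)}g\le\tilde g\le{\rm e}^{\Psi(\delta)}g$, $\|\tilde g-g\|_{C^1}\le\Psi(\delta)$, $|\K_{\tilde g}|\le 2$ and $|\nabla^j\Rm_{\tilde g}|\le C_j(m)$ for all $j$, and this changes the Gromov--Hausdorff distance to $N$ by at most $\Psi(\delta)$. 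Since $C^1$-closeness of metrics preserves both the submersion property and the almost-isometry estimate (with a larger $\Psi$), and $M$ is compact, it suffices to prove the theorem for $(M,\tilde g)$ and $(N,h)$ and then transfer the conclusion back.

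To construct the map, fix a maximal $r$-separated net $\{q_\alpha\}_{\alpha=1}^L$ in $N$ with $r=r(m,D,v)\in(0,i_0)$ small; bounded geometry bounds $L$ by $m,D,v$. Given a Gromov--Hausdorff $\delta$-approximation $\phi\colon M\to N$, choose $p_\alpha\in M$ with $d_M(\cdot,p_\alpha)$ close to $d_N(\phi(\cdot),q_\alpha)$, and mollify at scale $r$ to obtain smooth Euclidean-space-valued maps $\Phi$ on $M$ and $\bar\Phi$ on $N$ whose $\alpha$-th components are the mollified distance functions to $p_\alpha$, resp.\ to $q_\alpha$. By bounded geometry, $\bar\Phi$ is an almost-isometric smooth embedding onto a submanifold $\Sigma$ with controlled reach and second fundamental form, and $\bar\Phi^{-1}$ is Lipschitz with controlled constant; since the relevant distance functions are $\Psi(\delta)$-close, $\Phi(M)$ lies in a thin tube around $\Sigma$, so we may set $f:=\bar\Phi^{-1}\circ\pi_\Sigma\circ\Phi\colon M\to N$ with $\pi_\Sigma$ the nearest-point projection. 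Then $f$ is smooth and $C^0$-close to $\phi$, hence a $\Psi(\delta+r)$-Hausdorff approximation, and gradient and Hessian estimates for mollified distance functions (using $|\nabla^j\Rm_{\tilde g}|\le C_j$) give ${\rm e}^{-\Psi}|\xi|_{\tilde g}\le|{\rm d}f(\xi)|_h\le{\rm e}^{\Psi}|\xi|_{\tilde g}$ for $\xi\perp\ker{\rm d}f$; in particular ${\rm d}f$ is everywhere surjective once $\delta,r$ are small enough, which is assertion~(1).

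For the fibers: by Ehresmann's theorem $f$ is a locally trivial fiber bundle with closed $(m-k)$-dimensional fiber $F$; fix $F_y=f^{-1}(y)$. The relation ${\rm d}f(V)=0$ for $V$ tangent to $F_y$, upon covariant differentiation, expresses the horizontal component of $\nabla_V V$ through $(\nabla\,{\rm d}f)(V,V)$ and the inverse of ${\rm d}f$ on horizontal vectors, so the second fundamental form of $F_y$ in $(M,\tilde g)$ obeys $|II_{F_y}|\le C(m,D,v)\|\nabla\,{\rm d}f\|_\infty$, and with the Gauss equation and $|\K_{\tilde g}|\le 2$ this bounds the intrinsic sectional curvature, $\max_{\wedge^2 TF_y}|\K_{F_y}|\le C'(m,D,v)\big(1+\|\nabla\,{\rm d}f\|_\infty^2\big)$. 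On the other hand, $f$ being a $\Psi(\delta+r)$-Hausdorff approximation forces the extrinsic diameter $\diam(F_y,d_{\tilde g})\le\Psi(\delta+r)$, and this upgrades to an intrinsic bound: a minimizing ambient geodesic between two points of $F_y$ projects under $f$ to a short loop in $N$, which contracts through short loops since $N$ has bounded geometry, and lifting the contraction horizontally (the bundle connection has curvature bounded by the Riemannian-submersion data) deforms the geodesic into a path inside $F_y$ whose length is controlled by the area swept, so $\diam(F_y,\tilde g|_{F_y})\le\Psi'(\delta+r\,|\,m,D,v)$. Choosing first $r$ and then $\delta<\varepsilon(m,D,v)$ so small that $\diam(F_y,\tilde g|_{F_y})^2\max_{\wedge^2 TF_y}|\K_{F_y}|<\varepsilon(m-k)^2$, the constant of Theorem~\ref{thm: Gromov78}, we conclude that $F_y$ is diffeomorphic to an infranil manifold, which is assertion~(2); transferring back to $(M,g)$ as above finishes the proof.

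The main obstacle lies in the last step: controlling $\|\nabla\,{\rm d}f\|_\infty$ --- and hence the second fundamental form of the fibers, which feeds the left side of Gromov's inequality --- by constants independent of how collapsed $M$ is. The naive distance-function construction has a built-in tension, since shrinking the mollification scale $r$ to sharpen the Hausdorff approximation worsens the Hessian bound (of order $r^{-1}$), and one must either balance the two scales precisely or else follow Fukaya's original route and work equivariantly upstairs: the orthonormal frame bundle $FM$ does not collapse, because the locally defined nilpotent near-symmetries of $M$ lift to $FM$ with an added rotational component, so on $FM$ one has uniformly bounded geometry, can build an $O(m)$-equivariant fibration, and then push it down. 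Carrying either of these through --- in particular estimating mollified distance functions near cut loci, which is why the preliminary smoothing is needed --- is where the substantive work resides.
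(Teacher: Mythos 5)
Your outline follows Fukaya's original strategy (smooth the metrics, embed $N$ into $\mathbb{R}^L$ by distance-type functions, map $M$ into a tube around the image and project, then feed the fibers into Theorem~\ref{thm: Gromov78}), and you correctly isolate where the difficulty sits; but as written the proof has a genuine gap precisely there, and the scale ordering you propose cannot close it. With mollification at a fixed scale $r$, the map $f$ you build is only $C^0$-close to the Gromov--Hausdorff approximation $\phi$ up to an error of order $r$, not $\Psi(\delta)$: averaging the distance functions over $r$-balls in the collapsed manifold $M$ and over $r$-balls in $N$ introduces a discrepancy proportional to $r$, because under collapse the pushforward of the normalized volume of $B_g(p_\alpha,r)$ need not resemble the normalized volume of $B_h(q_\alpha,r)$ (the renormalized limit measure has nonconstant density in general). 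Hence $\diam\big(f^{-1}(y)\big)\lesssim \Psi(\delta)+r$ does \emph{not} tend to zero for fixed $r$, while your Hessian bound $\|\nabla\,{\rm d}f\|$ degenerates like $r^{-1}$ as $r\to 0$ (and you give no argument that it does not). Consequently $\diam(F_y)^2\max|\mathbf{K}_{F_y}|\lesssim (r+\delta)^2\big(1+r^{-2}\big)$ remains of order one no matter how you choose ``first $r$, then $\delta$'', and Theorem~\ref{thm: Gromov78} never becomes applicable. So the two estimates that the theorem really requires --- fibers of diameter $\Psi(\delta)$ \emph{and} second fundamental form bounded by $C(m,D,v)$ independently of $\delta$ --- are exactly the content you defer in your closing paragraph, and they are the theorem, not a technicality.

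The accepted way out is the one you only gesture at: pass to the orthonormal frame bundle $F(M)$ with its canonical metric, which does not collapse (the local nilpotent almost-isometries lift with a rotational component, so $F(M)$ has injectivity radius bounded below by $c(m)$ after smoothing), and where the relevant limit has bounded geometry. There one can use honest distance functions at a definite scale --- no cut-locus trouble, no mollification penalty --- so the resulting ${\rm O}(m)$-equivariant map is a genuine $\Psi(\delta)$-approximation with $|\nabla\,{\rm d}\bar f|\le c(m)$, giving fibers of diameter $\Psi(\delta)$ with uniformly bounded second fundamental form (this is exactly the bound recorded in Theorem~\ref{thm: singular_fibration}); pushing down by ${\rm O}(m)$-equivariance then yields $f\colon M\to N$ with properties (1) and (2). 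This frame-bundle mechanism is the ``1-jets of isometries'' observation highlighted in the survey after Theorem~\ref{thm: singular_fibration}, and carrying it out (or, alternatively, proving a two-sided Hessian bound for ball-averaged distance functions on $M$ that is uniform in the collapsing, which is Fukaya's original technical lemma) is what your proposal is missing. Smaller points: the intrinsic-versus-extrinsic fiber diameter upgrade should be derived from the bounded second fundamental form rather than from the loop-contraction sketch you give, and surjectivity of $f$ (needed for Ehresmann) should be noted as a consequence of the approximation property and compactness.
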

This theorem describes the diffeomorphism type of those
sufficiently collapsed manifolds in $\mathcal{M}_{\rm Rm}(m,D)$ by those ``minimal
models'' in $\mathcal{M}_{\rm Rm}(k,D,v)$, as long as we can find such a lower
dimensional model space. In general however, we cannot expect a sequence in
$\mathcal{M}_{\rm Rm}(m,D)$ to collapse to an element
in $\mathcal{M}_{\rm Rm}(k,D,v)$. We will refer to those Hausdorff $k$-dimensional
($k<m$) metric spaces arising as the Gromov--Hausdorff limits of sequences in
$\mathcal{M}_{\rm Rm}(m,D)$ as the \emph{collapsing limit spaces}. The local
structure of such spaces is described in \cite[Theorem~0.5]{Fukaya88}: for any
point $x$ in a~collapsing limit space $(X,d_X)$, there is an open neighborhood
$V$ of $x$, a~Lie group $G_x$ admitting a faithful representation to ${\rm O}(n)$ (for
some $n\le m$), and a $G_x$-invariant Riemannian metric $\tilde{g}$ on an open
neighborhood $U$ of $\vec{o}\in \mathbb{R}^n$, such that the identity component
of $G_x$ is isomorphic to a torus and $(V,d_X)\equiv (U,\tilde{g})\slash G_x$.
It is consequently shown in \cite{CFG92, Fukaya88, Fukaya89} that a manifold $M\in \mathcal{M}_{\rm Rm}(m,D)$ sufficiently
Gromov--Hausdorff close to a collapsing limit space $X$ exhibits a singular
fibration over $X$:
\begin{Theorem}[singular fibration, 1988--1992] \label{thm: singular_fibration}
Given $D\ge 1$, there are uniform constants $\varepsilon(m,D)$ and $c(m)>0$ to
the following effect: if $(M,g)\in \mathcal{M}_{\rm Rm}(m,D)$ satisfies
$|M|_g<\varepsilon\le \varepsilon(m,D)$, then the frame bundle $F(M)$ of $M$,
equipped with the canonical metric $\bar{g}$, is Gromov--Hausdorff close to some
$(Y,h)\in \mathcal{M}_{\rm Rm}(m',D',v')$ with $m'<m+\frac{1}{2}m(m-1)$ and
$D',v'>0$ determined by $m$ and $D$, such that ${\rm O}(m)$ acts isometrically on
$(Y,h)$ and there is an ${\rm O}(m)$-equivariant fiber bundle $\bar{f}\colon
(F(M),{\rm O}(m))\to (Y,{\rm O}(m))$; the fiber of $\bar{f}$ is diffeomorphic to a~compact
nilmanifold $N\slash \Gamma$ $($with $N$ being a~simply connected nilpotent Lie
group and $\Gamma\le N$ a co-compact lattice$)$, and the structure group is
contained in $(C(N)\slash (C(N)\cap \Gamma))\rtimes \operatorname{Aut}(\Gamma)$;
moreover, $\bar{f}$ induces a~singular fibration $f\colon M\to X=Y\slash {\rm O}(m)$ that
fits into the following commutative diagram:
\begin{align*}
\begin{split}
\begin{xy}
<0em,0em>*+{(F(M),{\rm O}(m))}="x",
<-7em, 0em>*+{N\slash \Gamma}="w",
<0em,-4em>*+{M}="v",
<9em,0em>*+{(Y,{\rm O}(m))}="y",
<9em,-4em>*+{X=Y\slash {\rm O}(m).}="u",
 "w";"x" **@{-} ?>*@{>}?<>(.5)*!/_0.5em/{ },
 "y";"u" **@{-} ?>*@{>} ?<>(.5)*!/_0.5em/{\scriptstyle \ \ \ \ \ \ \slash
 {\rm O}(m)},
 "v";"u" **@{-} ?>*@{>} ?<>(.5)*!/_0.5em/{\scriptstyle f},
 "x";"y" **@{-} ?>*@{>} ?<>(.5)*!/_0.5em/{\scriptstyle \bar{f}},
 "x";"v" **@{-} ?>*@{>} ?<>(.5)*!/_0.5em/{\scriptstyle \ \ \ \ \ \ \slash
 {\rm O}(m)},
\end{xy}
\end{split}
\end{align*}
Finally, $\bar{f}$ is an $\Psi(\varepsilon|m,D)$-Gromov--Hausdorff approximation
satisfying item~$(1)$ of Theorem~{\rm \ref{thm: Fukaya1}}, and the second fundamental
form of each~$\bar{f}$ fiber is uniformly bounded by $c(m)$ in magnitude.
\end{Theorem}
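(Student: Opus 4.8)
The plan is to lift the problem to the orthonormal frame bundle $F(M)$, where the collapsing limit is a smooth \emph{manifold} carrying an isometric ${\rm O}(m)$-action and the collapse is realized by an honest fiber bundle, and then to push the whole picture down via the ${\rm O}(m)$-quotient.

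First I would equip $\pi\colon F(M)\to M$ with the canonical metric $\bar g$ built from the Levi-Civita connection of $g$ and a fixed bi-invariant metric on ${\rm O}(m)$, so that $\pi$ is a Riemannian submersion with totally geodesic fibers isometric to ${\rm O}(m)$. By the O'Neill formulas the curvature of $\bar g$ is algebraic in $\Rm_g$ and $\nabla\Rm_g$, so after a routine preliminary smoothing of $g$ --- which we may assume changes it in $C^2$ by at most $\Psi(\varepsilon|m,D)$ while keeping $|\Rm_g|\le C(m)$ and producing $|\nabla\Rm_g|\le C(m,D)$ --- we obtain $(F(M),\bar g)\in\mathcal{M}_{\rm Rm}(m',D')$ with $\dim F(M)=m+\frac{1}{2}m(m-1)$ and $D'=D'(m,D)$. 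Since $|F(M)|_{\bar g}=|{\rm O}(m)|\cdot|M|_g<|{\rm O}(m)|\varepsilon$, the frame bundle is likewise collapsed; but the whole point of passing to frames is that an isometry fixing a frame is the identity, so the finite ``inframonodromy'' that obstructs a manifold limit downstairs now acts freely on $F(M)$, and the collapsing limits of such $(F(M),\bar g)$ are smooth manifolds rather than orbifolds.

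Next I would apply the ${\rm O}(m)$-equivariant refinement of Theorem~\ref{thm: Fukaya1} at the frame-bundle level (\cite{Fukaya88,Fukaya89}): for $|M|_g$ small, $F(M)$ is ${\rm O}(m)$-equivariantly Gromov--Hausdorff close to a manifold $(Y,h)\in\mathcal{M}_{\rm Rm}(m',D',v')$ with $m'<m+\frac{1}{2}m(m-1)$ carrying an isometric ${\rm O}(m)$-action, and there is an ${\rm O}(m)$-equivariant fibration $\bar f\colon F(M)\to Y$ that is a $\Psi(\varepsilon|m,D)$-almost Riemannian submersion. On the frame bundle the fiber type improves from infranilmanifold to a genuine nilmanifold $N\slash\Gamma$ and, as part of the same analysis, the structure group reduces to $(C(N)\slash(C(N)\cap\Gamma))\rtimes\operatorname{Aut}(\Gamma)$, i.e.\ translations by the central torus together with lattice automorphisms. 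Dividing the whole diagram by ${\rm O}(m)$ --- which acts freely on $F(M)$, so that $F(M)\slash{\rm O}(m)=M$, but not necessarily freely on $Y$ --- the map $\bar f$ descends to $f\colon M\to X:=Y\slash{\rm O}(m)$, completing the commutative square; since the ${\rm O}(m)$-action on $Y$ may have fixed points, $X$ is in general only a collapsing limit space and $f$ is a \emph{singular} fibration, the almost-Riemannian-submersion estimate transferring verbatim from $\bar f$ to $f$.

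Finally, to secure the uniform bound $c(m)$ on the second fundamental forms of the $\bar f$-fibers I would invoke the smoothing construction of Cheeger, Fukaya and Gromov \cite{CFG92}: averaging $\bar g$ over the local nilpotent Killing structure --- a center-of-mass procedure along the $N$-orbits --- produces a nearby invariant metric for which each fiber is, up to $\Psi(\varepsilon|m,D)$, isometric to $N\slash\Gamma$ with a left-invariant metric, and the bound then follows from the ambient curvature bound together with the a priori fiber-diameter bound $\Psi(\varepsilon|m,D)$. I expect this last step to be the genuine obstacle: constructing, in a collapsed and a priori only $C^{1,\alpha}$-regular situation, a smoothed metric compatible with the full nilpotent Killing structure and carrying the fiberwise geometry bounds is precisely the technical heart of \cite{CFG92}, resting on a delicate iterated smoothing intertwined with the local analysis of the $N$-structure.
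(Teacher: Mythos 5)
Your outline follows essentially the same route the paper indicates for this classical result: since this is a survey statement, the paper offers no proof beyond citing \cite{CFG92,Fukaya88,Fukaya89} and the key observation that isometries are determined by their $1$-jets, so one lifts to the frame bundle where the collapsing limit is a smooth manifold with an isometric ${\rm O}(m)$-action, applies Fukaya's equivariant fibration theorem there (nilmanifold fibers, reduced structure group), and descends through the ${\rm O}(m)$-quotient to the singular fibration $f\colon M\to X$, with the fiberwise second fundamental form bound coming from the invariant-metric construction of \cite{CFG92} -- exactly as you propose. One small inaccuracy: the preliminary smoothing gives $C^1$-closeness of $g_{\varepsilon}$ to $g$ together with bounds on all covariant derivatives of curvature (Abresch/Shi-type), not $C^2$-closeness of order $\Psi(\varepsilon|m,D)$ with a uniform $|\nabla\Rm|$ bound, which is in general impossible; the $C^1$ statement is all your argument actually uses.
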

Here the key observation is that two isometries of a Riemannian manifold are
identical if their $1$-jets agree at some point, and the total space of
$1$-jets of isometries is conveniently represented by self-maps of the frame
bundle; see~\cite{Fukaya88}. Notice that the above mentioned fiber bundle
theorems are for manifolds collapsing also with bounded diameter, and they are
in the differentiable category. However, due to the existence of abundant local
symmetries for those very collapsed manifolds with bounded sectional curvature,
it is natural to wonder if the extra symmetry provided by the infranil fibers
can be reflected on the level of Riemannian metrics, locally around a given
fiber. This direction has been studied by Cheeger and Gromov~\cite{CGI, CGII}
for the central part of the infranil fibers (constructing the $F$-structure),
and is thoroughly investigated in the foundational work of Cheeger, Fukaya and
Gromov~\cite{CFG92}: on the very collapsed part of a complete Riemannian
manifold with bounded sectional curvature, a nilpotent Killing structure of
positive rank is constructed, providing the finest description of the
collapsing geometry.

We now define the \emph{nilpotent structure} (a.k.a.\ $N$-structure) on a given
complete Riemannian manifold $(M,g)$. Roughly speaking, it is the local singular
fiber bundle as in Theorem~\ref{thm: singular_fibration} patched together. Let
$\{U_j\}$ be a locally finite open covering of $M$, then for each $U_j$ we can
associate an \emph{elementary $N$-structure} $\mathfrak{N}_j$, which is nothing
but a singular fiber bundle $f_j\colon U_j\to X_j$ satisfying the description in
Theorem~\ref{thm: singular_fibration}~-- this can be seen as a localization of
that theorem, and all the information is encoded in the following
commutative diagram
\begin{align*}
\begin{split}
\begin{xy}
<0em,0em>*+{(F(U_j),{\rm O}(m))}="x",
<-7em, 0em>*+{N_j\slash \Gamma_j}="w",
<0em,-4em>*+{U_j}="v",
<9em,0em>*+{(Y_j,{\rm O}(m))}="y",
<9em,-4em>*+{X_j=Y_j\slash {\rm O}(m),}="u",
 "w";"x" **@{-} ?>*@{>}?<>(.5)*!/_0.5em/{ },
 "y";"u" **@{-} ?>*@{>} ?<>(.5)*!/_0.5em/{\scriptstyle \ \ {\rm pr}_j},
 "v";"u" **@{-} ?>*@{>} ?<>(.5)*!/_0.5em/{\scriptstyle f_j},
 "x";"y" **@{-} ?>*@{>} ?<>(.5)*!/_0.5em/{\scriptstyle \bar{f}_j},
 "x";"v" **@{-} ?>*@{>} ?<>(.5)*!/_0.5em/{\scriptstyle \ \ \overline{\rm pr}_j},
\end{xy}
\end{split}
\end{align*}
where $\overline{\rm pr}_j$ and ${\rm pr}_j$ denote the natural projections onto the space
of ${\rm O}(m)$ orbits. By the commutativity of this diagram, we see for any $x\in
X_j$ that $f_j^{-1}(x)=\overline{\rm pr}_j\big(\bar{f}_j^{-1}\big({\rm pr}_j^{-1}(x)
\big)\big)$ is an infranil manifold, called the
$\mathfrak{N}_j$-\emph{orbit} passing through any point of $f_j^{-1}(x)$; we
let $\mathcal{O}_j(p)$ denote such an orbit passing through a given $p\in
f_j^{-1}(x)$.
An open set $V\subset U_j$ is said to be $\mathfrak{N}_j$-\emph{invariant} if
it is the union of $\mathfrak{N}_j$-orbits of points in $V$. An $N$-structure
$\mathfrak{N}$ on $M$ is then a collection of elementary $N$-structure
$\{\mathfrak{N}_j\}$ satisfying the compatibility condition: there is an
ordering of $\{j\}$ such that if $U_{j}\cap U_{j'}\not=\varnothing$ with $j<j'$,
then it is both $\mathfrak{N}_j$- and $\mathfrak{N}_{j'}$-invariant; moreover,
there is an ${\rm O}(m)$-equivariant fiber bundle
$\bar{f}_{jj'}\colon \bar{f}_{j'}(F(U_j\cap U_{j'}))\to \bar{f}_{j}(F(U_j\cap
U_{j'}))$ such that $\bar{f}_{jj'}\circ \bar{f}_{j'}=\bar{f}_j$. Notice that
this implies $\mathcal{O}_{j'}(p)\subset \mathcal{O}_j(p)$ for any $p\in
U_j\cap U_{j'}$, and the $\mathfrak{N}$-orbit passing through $p\in M$ is then
defined as $\mathcal{O}(p):=\cup_j \mathcal{O}_j(p)$. Clearly, we have a~partition $M=\cup \mathcal{O}(p)$ into disjoint unions of $\mathfrak{N}$-orbits.
We also define the \emph{rank} of $\mathfrak{N}_j$ as $\rank
\mathfrak{N}_j:=\min\limits_{p\in U_j}\dim \mathcal{O}_j(p)$, and the \emph{rank} of
$\mathfrak{N}$ as $\rank \mathfrak{N}:=\min_j\rank \mathfrak{N}_j$. If
$N_j=N$ for all $j$, we say that $\mathfrak{N}$ is a~\emph{pure} nilpotent
structure, which is always the case when the collapsing sequence has uniformly
bounded diameter, as shown in~\cite{Fukaya89}. If each~$N_j$ is abelian, we say
that $\mathfrak{N}$ defines an $F$-structure.

Notice that on each $F(U_j)$, the fiber of $\bar{f}_j$ is diffeomorphic to the
symmetric space $N_j\slash \Gamma_j$, on which the simply connected nilpotent
Lie group $N_j$ acts (or equivalently, the sheaf of its Lie algebra maps
homomorphically into the sheaf of vector fields tangent to the $\bar{f}_j$
fibers). We say that the $N$-structure $\mathfrak{N}$ is a \emph{nilpotent
Killing structure} compatible with a Riemannian metric~$g'$ on~$M$, if the
canonically induced metric $\bar{g}'$ on $F(M)$ has its restriction
$\bar{g}'|_{F(U_j)}$ for each~$j$ being \emph{left} invariant under the actions
of $N_j$ (or equivalently, the sheaf of its Lie algebra maps homomorphically
into the sheaf of $\bar{g}'$-\emph{Killing} vector fields tangent to the
$\bar{f}_j$ fibers). Notice that the existence of the nilpotent Killing
structure makes each fiber bundle map $\bar{f}_j\colon F(U_j)\to Y_j$ a Riemannian
submersion. Moreover, the $\bar{g}'$-Killing vector fields induced by each
$N_j$ descends to $U_j$, defining $g'$-Killing vector fields tangent to the
$\mathfrak{N}_j$-orbits. When $\mathfrak{N}$ is pure, the partition
of $M$ into the $\mathfrak{N}$-orbits then defines a (singular) Riemannian
foliation, with leaves being the $\mathfrak{N}$-orbits; see~\cite{Molino88} and
Section~\ref{section5.2} for more discussions.

The main achievement of \cite{CFG92} can now be stated as following; see also
\cite[Theorem~5.1]{Rong07}.
\begin{Theorem}[nilpotent Killing structure, 1992]\label{thm: CFG}
There is a $\varepsilon(m)>0$ such that for any positive $\varepsilon
<\varepsilon(m)$, if $(M,g)$ is a complete Riemannian manifold with
$\sup_M |\mathbf{K}_g |\le 1$, and for any $x\in M$, $|B_g(x,1)|<\varepsilon$, then the following hold:
\begin{enumerate}\itemsep=0pt
 \item[$1)$] there exists a regular $(\rho,k)$-round metric
 $g_{\varepsilon}$ such that
 $ \|g-g_{\varepsilon} \|_{C^1(M,g)}<\Psi_{\rm CFG}(\varepsilon|m)$ holds
 for some $\Psi_{\rm CFG}(\varepsilon|m)>0$ with
 $\lim\limits_{\varepsilon \to 0}\Psi_{\rm CFG}(\varepsilon|m)=0$;
 \item[$2)$] there is a nilpotent Killing structure $\mathfrak{N}$ of positive rank,
 compatible with $g_{\varepsilon}$.
 \end{enumerate}
\end{Theorem}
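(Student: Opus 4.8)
The plan is to replace $g$ by a metric with bounded geometry of every order, transfer the whole problem to the orthonormal frame bundle where the collapse becomes ``honest'', build local nilpotent Lie group actions there, and finally average the metric so that these actions become isometric before descending back to $M$. Since $\sup_M|\K_g|\le 1$, one first smooths $g$ at a fixed scale -- e.g.\ by Abresch's smoothing, or by short-time Ricci-flow smoothing, or by local mollification after embedding -- to obtain a complete metric $g'$ with $\|g-g'\|_{C^1}<\Psi(\varepsilon|m)$, with $|\nabla^k\Rm_{g'}|\le C(k,m)$ for all $k$, and with the everywhere-collapse $|B_{g'}(x,1)|<\Psi(\varepsilon|m)$ preserved; thus $(M,g')$ has bounded geometry of all orders while still collapsing at every point. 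One then lifts to the $\mathrm{O}(m)$-bundle $F(M)$ of $g'$-orthonormal frames with its canonical metric $\bar g'$: now $\mathrm{O}(m)$ acts isometrically with orbits of uniformly bounded size, $(F(M),\bar g')$ again has bounded geometry of all orders, and -- because $\mathrm{O}(m)$ has no arbitrarily small subgroups -- the (still possible) collapse of $F(M)$ is by \emph{genuine}, non-singular nilmanifold fibrations onto its limits. The guiding principle (Fukaya) is that an isometry of $(M,g')$ is determined by its $1$-jet at one point, so the local almost-symmetries responsible for the collapse of $M$ are faithfully recorded as self-maps of $F(M)$.

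\emph{Local nilpotent actions.} Fix a small scale $\rho=\rho(m)$ and cover $M$, hence $F(M)$, by $\rho$-balls. On each $\rho$-ball of $F(M)$ the pseudo-group generated by short loops of the collapsed base is, up to bounded index, nilpotent -- this is the generalized Margulis lemma, obtained by applying Gromov's almost flat manifold theorem, Theorem~\ref{thm: Gromov78}, inside $F(M)$ where the geometry is bounded of all orders. Since a sequence of almost-isometric group actions with defect tending to $0$ subconverges to a genuine isometric action, and Fukaya's fibration theorem (the local version of Theorems~\ref{thm: Fukaya1} and~\ref{thm: singular_fibration}) applies to the non-collapsed $F(M)$, one extracts from this pseudo-group a \emph{local nilpotent Lie group} $\mathcal{N}_j$ acting on a neighbourhood $F(U_j)$, commuting with the $\mathrm{O}(m)$-action, whose orbits are exactly the local collapsing fibres; $\mathrm{O}(m)$-equivariant Gromov--Hausdorff convergence packages this as the $\mathrm{O}(m)$-equivariant fibre bundle $\bar f_j\colon F(U_j)\to Y_j$ of Theorem~\ref{thm: singular_fibration}, i.e.\ as the elementary $N$-structure $\mathfrak{N}_j$, with fibre the nilmanifold $N_j/\Gamma_j$ and the stated structure group.

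\emph{Averaging, patching, and descent.} The metric $\bar g'$ is only $\Psi(\varepsilon|m)$-close to being $\mathcal{N}_j$-invariant, so one averages $\bar g'$ along the orbits of the connected nilpotent (hence unimodular, controlled-Haar) group $\mathcal{N}_j$ -- by a center-of-mass construction over the local universal cover -- to produce $\bar g_j$ for which the Lie algebra of $\mathcal{N}_j$ acts by $\bar g_j$-Killing fields tangent to the $\bar f_j$-fibres, with $\|\bar g'-\bar g_j\|_{C^1}<\Psi(\varepsilon|m)$, the $C^1$-defect of averaging being controlled by the all-orders geometry bounds and the volume control on the nilpotent orbits. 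Taking the ordering of $\{U_j\}$ from the definition of an $N$-structure and using the nesting $\mathcal{O}_{j'}(p)\subset\mathcal{O}_j(p)$, one averages the overlapping pieces in increasing order of $j$ so that each successive averaging preserves invariance under all lower-index structures, and an $\mathrm{O}(m)$-invariant partition of unity patches these into a single $\mathrm{O}(m)$-invariant metric $\bar g_\varepsilon$ on $F(M)$, still $C^1$-close to $\bar g'$, making every $\mathfrak{N}_j$ a nilpotent Killing structure. Being $\mathrm{O}(m)$-invariant, $\bar g_\varepsilon$ is the canonical lift of a metric $g_\varepsilon$ on $M$ with $\|g-g_\varepsilon\|_{C^1(M,g)}<\Psi_{\rm CFG}(\varepsilon|m)$; performing the whole construction at the fixed scale $\rho$ with $k$ derivatives under control yields the $(\rho,k)$-roundness of $g_\varepsilon$, and the rank is positive because $|B_g(x,1)|<\varepsilon$ at \emph{every} $x$ forces $\dim\mathcal{O}(p)\ge1$ everywhere (a zero-dimensional orbit would mean $M$ is non-collapsed nearby).

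\emph{Main obstacle.} The delicate part is the last step: securing genuine compatibility of the averaged local metrics on overlaps while keeping the total perturbation $C^1$-small. One must control how the local nilpotent Lie algebras $\mathfrak{n}_j$ embed into one another across $U_j\cap U_{j'}$ so that averaging in the prescribed order does not spoil invariance already achieved, and one must estimate the $C^1$-size of each averaging step precisely in terms of the almost-Killing defect of $\bar g'$ -- which is exactly where the all-orders bounds from the smoothing step and the rigidity (nilpotency, co-compact lattice) of the local groups are indispensable. Obtaining $C^1$ rather than merely $C^0$ closeness is the reason one cannot work directly on $M$ and must pass through $F(M)$ with quantitative control on the local group actions.
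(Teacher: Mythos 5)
You should first note that this paper does not prove Theorem~\ref{thm: CFG} at all: it is quoted as the main achievement of Cheeger--Fukaya--Gromov \cite{CFG92} (see also \cite[Theorem~5.1]{Rong07}), so there is no internal argument to measure your attempt against. What you have written is an outline of the genuine strategy of \cite{CFG92,Fukaya88,Fukaya89}: smooth $g$ to a nearby metric with bounded geometry of all orders, pass to the orthonormal frame bundle with its canonical metric, where Fukaya's $1$-jet observation makes the collapse occur along honest nilmanifold fibrations, build local nilpotent actions there, average the metric so that these actions become isometric, and descend $\mathrm{O}(m)$-equivariantly. At the level of the route taken, this is the same approach as the actual proof.

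As a proof, however, it has genuine gaps, and they sit exactly where you yourself place the ``main obstacle''. First, the elementary structures $\mathfrak{N}_j$ cannot be produced independently, ball by ball, by ``subconvergence of almost-isometric actions'' and then reconciled afterwards: the compatibility conditions $\mathcal{O}_{j'}(p)\subset\mathcal{O}_j(p)$ and $\bar{f}_{jj'}\circ\bar{f}_{j'}=\bar{f}_j$ must be built into the construction, which in \cite{CFG92} is achieved by an induction in which the fibration produced on one piece is shown (via Malcev rigidity of the nilpotent groups and lattices, and stability of nearby fibrations) to refine or be refined by the fibrations already constructed on overlapping pieces; a limiting argument on a fixed ball gives you a nilpotent germ with no control on how germs over different balls align. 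Second, the invariance step is not a partition-of-unity patching: interpolating metrics that are invariant under different local groups generically destroys both invariances, and the $C^1$-smallness of the orbitwise averaging requires the quantitative control of the orbit geometry (second fundamental forms of the fibers, the affine/flat structure along them) that occupies the bulk of \cite{CFG92}; asserting that ``an $\mathrm{O}(m)$-invariant partition of unity patches these'' skips precisely the inductive scheme in which each successive averaging is performed by groups already acting isometrically for the previously adjusted metric. Finally, your one-line argument for positive rank is only valid once the uniform index bound $C(m)$ for the finite part of the short-loop group is in force (otherwise a finite local group of unbounded order could account for the collapse); you do have this available from the Margulis-type statement you invoke, but it should be said. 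These are the points a complete proof must supply, and your sketch names them rather than proves them.
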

Here we say that a Riemannian metric $g$ is $(\rho,k)$-\emph{round}, with $\rho$
and $k$ determined by $M$ and $\varepsilon$, if for any $p\in M$ there is
an open neighborhood $U\subset M$ of $B_g(p,\rho)$, such that $U$ has a~normal
Riemannian covering $\tilde{U}$ with deck transformation group $\Lambda$, whose
injectivity radius is uniformly bounded below by $\rho$, and there is an
isometric action on $\tilde{U}$ by a Lie group $N$ with nilpotent identity
component~$N^0$, extending the deck transformations by $\Lambda$ with index bound
$\big[N:N^0\big] = \big[\Lambda:\Lambda\cap N^0\big] \le k$.
In the presence of a compatible $N$-structure $\mathfrak{N}=\{\mathfrak{N}_j\}$,
for any $p\in M$ there is some $U_j$ such that $B_g(p,\rho)\Subset U_j$, and
$N^0\cong N_j\slash \mathbb{R}^d$ as Lie groups with $d=\dim G_{f_j(p)}$. Here
$f_j\colon U_j\to X_j$ is the local singular fibration over the collapsing limit
space~$X_j$, and~$G_{f_j(p)}$ is the isotropy group of $f_j(p)\in X_j$ as
discussed previously. Since the identity component of $G_{f_j(p)}$ is a
$d$-torus, its Lie algebra is indeed $\mathbb{R}^d$.
Moreover, we have $\Lambda\cap N_0\cong q(\Gamma_j)$ with $q\colon N_j\to N^0$ being
the natural quotient map.

Besides serving as a fundamental theorem of collapsing geometry with
bounded sectional curvature, Theorems~\ref{thm: singular_fibration} and~\ref{thm: CFG} have stimulated many exciting discoveries in Riemannian
geometry. To name a few, low dimensional collapsed manifolds were investigated,
for which some rationality conjectures of Cheeger and Gromov on the geometric
invariants associated to collapsing were verified~\cite{Rong93a, Rong95}, and
the $4$-dimensional case of Gromov's gap conjecture on the minimal volume was
confirmed \cite{Rong93b}; in \cite{CR95, CR96}, the singular structures
described in Theorem~\ref{thm: singular_fibration} were investigated, and the
existence of a mixed polarized sub-structure (or a pure polarization when there
is bounded covering geometry) was proven; in \cite{CCR01, CCR04}, an Abelian
structure has been constructed on very collapsed manifolds with bounded
non-positive sectional curvature, verifying the Buyulo conjecture
\cite{Buyalo90a, Buyalo90b} on $Cr$-structure in general dimensions; in
\cite{FR99, FR02, PRT99, PT99}, diffeomorphism stability and finiteness results
have been established for very collapsed $2$-connected manifolds; and in~\cite{CR09}, one-parameter families of collapsing metrics with bounded
sectional curvature have been constructed under the presence of the nilpotent
Killing structures of positive rank, extending the previous works of Cheeger
and Gromov~\cite{CGI}.

\subsection{Almost flatness by weaker curvature assumptions}\label{section2.2}
Besides the far-reaching generalization of Gromov's almost flat
manifold theorem to paramet\-ri\-zed versions in the collapsing geometry with
bounded sectional curvature, another direction of generalization is to weaken
the curvature assumption (\ref{eqn: AF_Rm}) in Theorem~\ref{thm: Gromov78}.

In \cite{DWY96}, Dai, Wei and Ye obtained a generalization for almost Ricci-flat
manifolds whose conjugate radii are uniformly bounded below.
 \begin{Theorem}[almost Ricci-flat manifolds, 1996] \label{thm: DWY96}
 There is a uniform constant $\varepsilon(m)\in (0,1)$ such that if a closed
 $m$-dimensional Riemannian manifold $(M,g)$ with conjugate radii bounded below
 by $1$ satisfies
\begin{align*}
 \diam(M,g)^2\max_M |\Rc_g |_g \le \varepsilon^2,
\end{align*}
then $M$ is diffeomorphic to an infranil manifold.
\end{Theorem}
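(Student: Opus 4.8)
The plan is to derive Theorem~\ref{thm: DWY96} from Gromov's almost flat manifold theorem (Theorem~\ref{thm: Gromov78}) together with a local smoothing argument, reducing the hypothesis on the Ricci tensor to a conclusion about the sectional curvature of a slightly modified metric on a suitable covering scale. The first step is to normalize: after rescaling we may assume $\diam(M,g)=1$, so the hypothesis becomes $\max_M|\Rc_g|\le\varepsilon^2$, while the conjugate radius of every point is bounded below by some definite $r_0>0$ (the original lower bound $1$ shrinks by the rescaling factor, but since $\diam^2\max|\Rc|\le\varepsilon^2$ forces the rescaling factor to be large only when curvature is large, one keeps careful track of the inequality $\operatorname{conj}(M,g)\ge c(m)$ after normalization -- this bookkeeping is standard in \cite{DWY96}). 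The point of the conjugate radius bound is twofold: it gives a definite-size region on which the exponential map is a local diffeomorphism, hence harmonic radius and injectivity radius estimates on the local universal cover; and, combined with the two-sided Ricci bound $|\Rc_g|\le\varepsilon^2$, it yields (via Anderson-type or Jost--Karcher harmonic coordinate estimates) uniform $C^{1,\alpha}$ control of $g$ in harmonic coordinates on balls of definite radius in the local cover.

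The second, and main, step is a smoothing: pull $g$ back to the local universal cover, mollify it (either by embedding into Euclidean space and averaging, by convolution in harmonic coordinates, or by a short-time Ricci flow of the kind surveyed in Section~\ref{section4}) to obtain a nearby metric $g'$ whose \emph{sectional} curvature is controlled. The heuristic is that the $C^{1,\alpha}$ bound from harmonic coordinates plus the smallness of $\Rc$ should, after smoothing at the correct scale, give $|\mathbf{K}_{g'}|\le C(m)\varepsilon$ while keeping $\operatorname{diam}(M,g')\le 2$; then $\operatorname{diam}(M,g')^2\max|\mathbf{K}_{g'}|\le 4C(m)\varepsilon< \varepsilon(m)^2$ for $\varepsilon$ small, so Theorem~\ref{thm: Gromov78} applies to $(M,g')$ and yields that $M$ is diffeomorphic to an infranil manifold. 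One must ensure the smoothing descends to $M$ (equivariance under the deck group and under $\pi_1(M)$), which is automatic if one uses the local-cover mollification glued via a partition of unity, or if one uses Ricci flow, since Ricci flow respects local isometries -- exactly the mechanism invoked later in this paper for Question~\ref{qst: question}.

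I expect the main obstacle to be precisely the quantitative smoothing estimate: turning a two-sided Ricci bound plus $C^{1,\alpha}$ harmonic-coordinate control into a genuine sectional curvature bound of size $O(\varepsilon)$ after mollification. The difficulty is that $C^{1,\alpha}$ regularity of the metric does not directly bound the full curvature tensor (which involves second derivatives of $g$); one genuinely gains only after smoothing, and then must control how much the smoothing moves the metric and its Ricci tensor, so that the sectional curvature of the smoothed metric is still comparable to the (small) Ricci curvature of the original. This requires choosing the mollification scale $\lambda=\lambda(\varepsilon)$ so that, on one hand, $\|g'-g\|_{C^0}$ and the distortion of distances are negligible, while on the other hand $|\mathbf{K}_{g'}|$ is dominated by the averaged Ricci plus an error of order $\lambda^{-2}\|g'-g\|_{C^0}$ that must be absorbed. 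Balancing these, and verifying that the conjugate-radius lower bound survives the smoothing (so that the construction is uniform in the sequence), is where the real work lies; the subsequent appeal to Theorem~\ref{thm: Gromov78} is then immediate.
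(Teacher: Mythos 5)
Your overall architecture (smooth the metric, preferably by Ricci flow, then invoke Theorem~\ref{thm: Gromov78}) is the same family of argument that the paper attributes to \cite{DWY96}, which is only cited here with the remark that Ricci flow smoothing is the essential tool. But the quantitative step your proof hinges on is not merely ``the real work'' you defer---it is false as stated. You claim that a smoothing with constants depending only on $m$ (or on $m$ and the conjugate-radius bound $r_0$) upgrades $|\Rc_g|\le\varepsilon^2$ to $|\mathbf{K}_{g'}|\le C(m)\varepsilon$ while keeping $\diam(M,g')\le 2$. Test this on a Ricci-flat K\"ahler $K3$ surface: it has $\Rc\equiv 0$, hence satisfies your normalized hypothesis for \emph{every} $\varepsilon>0$ with its own fixed conjugate radius $r_0>0$, so your estimate would produce metrics on $K3$ with $\diam^2\max|\mathbf{K}|$ arbitrarily small; Theorem~\ref{thm: Gromov78} would then make $K3$ an infranil manifold, which it is not. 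No smoothing can convert ``small Ricci plus a conjugate radius bound'' into ``small sectional curvature'': what a Dai--Wei--Ye type smoothing theorem actually delivers is a metric $C^1$-close to $g$ with sectional curvature \emph{bounded} by $C(m,r_0)$, with no gain of smallness as $|\Rc_g|\to 0$.

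Consequently the smallness needed in Gromov's product $\diam^2\max|\mathbf{K}|$ must come from the other factor: the diameter has to be small compared with the conjugate radius (equivalently, after normalizing $\diam=1$ the conjugate radius is huge, and the smoothed curvature bound scales like $C(m)$ times the inverse square of that scale). This is exactly what your normalization discards: rescaling to $\diam(M,g)=1$ multiplies the conjugate radius by $\diam(M,g)^{-1}$, so you do \emph{not} retain a definite $r_0(m)$ when the diameter is large---your parenthetical claim that the rescaling factor is large only when the curvature is large has no basis, since the hypotheses as written place no upper bound on the diameter (indeed, read literally, the scaled-up Ricci-flat $K3$ also satisfies ``conjugate radii $\ge 1$'' and $\diam^2\max|\Rc|=0$, so the diameter/conjugate-radius coupling is genuinely part of the content of \cite{DWY96}, not bookkeeping). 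The correct route, and the one behind the theorem, is: smooth (by Ricci flow, which also settles your equivariance worries) at the scale of the conjugate radius to obtain $|\mathbf{K}_{g'}|\le C(m)$ there together with $C^1$-closeness to $g$, and then conclude because $\diam(M,g')$ is small relative to that scale, so that $\diam(M,g')^2\max|\mathbf{K}_{g'}|$ falls below Gromov's threshold. So the missing idea is not a finer mollification estimate turning small Ricci into small sectional curvature---that implication is false---but the correct handling of the two scales in the hypothesis.
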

While this theorem seems to be expected directly from Gromov's almost flat
manifold theorem, the proof actually requires the Ricci flow smoothing technique
-- to the authors' knowledge, this is the first instance where such a technique
is employed for manifolds satisfying certain Ricci curvature bounds. This
theorem is further generalized by Petersen, Wei and Ye in \cite{PWY99}, where
the regularity of the metrics is characterized by the \emph{harmonic}
$C^{0,\alpha}$ \emph{norm} of the manifolds (for any $\alpha\in (0,1)$); see
\cite[Theorem~1.4]{PWY99}, which we will not restate in this note for the sake
of brevity.

A more striking conjecture of Gromov \cite{GLP} predicted that
even if only assuming almost non-negative Ricci curvature, when the first Betti
number of the manifold is equal to its dimension, then it has to be a flat
$m$-torus. This conjecture is an effective version of the Bochner technique~\cite{Bochner, BY} which asserts that a Ricci non-negatively curved manifold
with maximal first Betti number (never exceeding the dimension) must be a flat
torus, and is confirmed by Colding based on his volume continuity theorem~\cite{Colding97}:
\begin{Theorem}[Colding--Gromov gap theorem, 1997]\label{thm: CG97}
There is a uniform constant $\varepsilon(m)\in (0,1)$ such that if a closed
$m$-dimensional Riemannian manifold $(M,g)$ satisfies
\begin{align*}
\diam(M,g)^2\Rc_g \ge -\varepsilon^2 g\qquad \text{and}\qquad b_1(M) = m,
\end{align*}
then $M$ is diffeomorphic to a flat torus.
\end{Theorem}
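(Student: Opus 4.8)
The plan is to degenerate the problem to the rigidity case of the Bochner vanishing theorem --- a closed manifold with $\Rc_g\ge 0$ and $b_1=m$ is a flat torus because every harmonic $1$-form is then parallel --- and to argue by contradiction. After rescaling I may assume $\diam(M,g)=1$, so that $\Rc_g\ge -\varepsilon^2 g$. Suppose the theorem fails: there are closed $m$-manifolds $(M_i,g_i)$ with $\diam(M_i,g_i)=1$, $\Rc_{g_i}\ge -\varepsilon_i^2 g_i$, $\varepsilon_i\to 0$ and $b_1(M_i)=m$, none of which is diffeomorphic to a flat torus; I will show that $M_i$ \emph{is} diffeomorphic to a flat torus once $i$ is large. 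Since $H^1(M_i,\mathbb{R})$ is $m$-dimensional, Hodge theory provides an $L^2$-orthonormal basis $\omega_1^i,\dots,\omega_m^i$ of harmonic $1$-forms; their periods over $H_1(M_i,\mathbb{Z})$ generate a full-rank lattice $L_i\subset\mathbb{R}^m$, and the free abelian cover $\pi_i\colon\widehat M_i\to M_i$ associated with $\pi_1(M_i)\twoheadrightarrow H_1(M_i,\mathbb{Z})/\mathrm{torsion}\cong\mathbb{Z}^m$ carries a free, cocompact, isometric action of $\Lambda_i\cong\mathbb{Z}^m$ with $\widehat M_i/\Lambda_i=M_i$ and with each $\pi_i^*\omega_a^i$ exact on $\widehat M_i$.

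The first step is the Bochner estimate: for a harmonic $1$-form $\omega$ one has $\tfrac{1}{2}\Delta|\omega|^2=|\nabla\omega|^2+\Rc(\omega^\sharp,\omega^\sharp)$, and integrating this over the closed manifold $M_i$ while using $\Rc_{g_i}\ge -\varepsilon_i^2 g_i$ yields
\[
\int_{M_i}|\nabla\omega_a^i|^2\,dV_{g_i}\ \le\ \varepsilon_i^2\int_{M_i}|\omega_a^i|^2\,dV_{g_i},\qquad a=1,\dots,m,
\]
so the $\omega_a^i$ are $L^2$-almost parallel. The second and decisive step is to upgrade this averaged statement to genuine geometric control. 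Let $\widehat\Phi_i\colon\widehat M_i\to\mathbb{R}^m$ be the Albanese map $\widehat\Phi_i(\tilde x)=\big(\int_{\tilde x_0}^{\tilde x}\pi_i^*\omega_1^i,\dots,\int_{\tilde x_0}^{\tilde x}\pi_i^*\omega_m^i\big)$, which is $\Lambda_i$-equivariant for the isomorphism $\rho_i\colon\Lambda_i\xrightarrow{\sim}L_i$ induced by the period map. Invoking Colding's segment inequality together with his volume continuity theorem~\cite{Colding97}, I would show: $\widehat M_i$ is pointed $\Psi(\varepsilon_i|m)$-Gromov--Hausdorff close to $\mathbb{R}^m$ with $\Lambda_i$ acting $\Psi(\varepsilon_i|m)$-close to a cocompact lattice; this convergence is non-collapsed; and hence the $\omega_a^i$ are $C^0(M_i)$-close to a parallel orthonormal coframe, so that the descended Albanese map $F_i\colon M_i\to T^m_i:=\mathbb{R}^m/L_i$ is a $\Psi(\varepsilon_i|m)$-Gromov--Hausdorff approximation whose differential is everywhere $\Psi(\varepsilon_i|m)$-close to a linear isometry, where $\Psi(\varepsilon_i|m)\to 0$ as $\varepsilon_i\to 0$.

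The endgame is then soft. For $i$ large the covectors $\omega_1^i(x),\dots,\omega_m^i(x)$ are linearly independent at every $x\in M_i$, so $F_i$ is a local diffeomorphism between closed $m$-manifolds and therefore a covering map $F_i\colon M_i\to T^m_i$. On fundamental groups $(F_i)_*$ factors as $\pi_1(M_i)\twoheadrightarrow H_1(M_i,\mathbb{Z})/\mathrm{torsion}\xrightarrow{\ \rho_i\ }L_i=\pi_1(T^m_i)$, which is surjective by the very definition of the period lattice; hence the covering has a single sheet and $F_i$ is a diffeomorphism. Since $T^m_i$ is a flat torus, $M_i$ is diffeomorphic to a flat torus --- contradicting the choice of the sequence. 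Unwinding the contradiction yields the dimensional constant $\varepsilon(m)$ (equivalently, one takes $\varepsilon(m)$ small enough that $\Psi(\varepsilon|m)$ falls below the dimensional thresholds in the non-degeneracy of $dF_i$).

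I expect the sole real obstacle to be the second step --- passing from $\|\nabla\omega_a^i\|_{L^2}\lesssim\varepsilon_i$ to pointwise control of $\omega_a^i$ and to the assertion that $\widehat\Phi_i$ nearly preserves distances between \emph{arbitrary} pairs of points, not merely in an averaged sense. This is exactly where Colding's segment inequality and the almost-rigidity of the Laplacian (the technical engine of~\cite{Colding97}) are indispensable, and where one must simultaneously rule out collapsing of both $M_i$ and the Albanese torus $T^m_i$; the volume continuity theorem is what closes this gap. By comparison, the Bochner computation, the construction of the abelian cover, and the final topological identification with a torus --- which needs no Bieberbach classification, since $T^m_i$ is a torus by construction --- are routine.
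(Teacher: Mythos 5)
The paper itself offers no proof of this theorem---it is quoted as a known result, with Remark 2.6 recording that Colding~\cite{Colding97} obtained the homotopy equivalence ($m=3$) and homeomorphism ($m>3$), and that the diffeomorphism statement came only later, via the Reifenberg method of Cheeger--Colding \cite[Appendix~A]{ChCoI}. Measured against that literature, your architecture is the right one: rescale, pass to the maximal free abelian cover, use Bochner to make the $m$ harmonic $1$-forms $L^2$-almost parallel, and use the segment inequality plus volume continuity to show that $\widehat M_i$ is pointed Gromov--Hausdorff close to $\mathbb{R}^m$ in a non-collapsed way, so that $M_i$ is GH-close to a flat $m$-torus. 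Up to that point your outline tracks Colding's argument.

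The genuine gap is the clause ``hence the $\omega_a^i$ are $C^0(M_i)$-close to a parallel orthonormal coframe, so that \dots\ $dF_i$ is everywhere $\Psi$-close to a linear isometry,'' on which your entire endgame (local diffeomorphism $\Rightarrow$ covering $\Rightarrow$ degree one) rests. The tools you invoke give only integral and measure-theoretic control: the Bochner identity yields $\|\nabla\omega_a^i\|_{L^2}\le\Psi$, the mean value inequality for the almost-subharmonic function $|\omega_a^i|^2$ yields pointwise \emph{upper} bounds, and the segment inequality converts $L^1$ bounds into statements about \emph{most} pairs of points or \emph{most} geodesics. None of this excludes a small bad set on which the coframe degenerates, because $|\omega_a^i|^2$ is not almost superharmonic and a pointwise lower bound on $|\omega_a^i|$ (or on $\det(\langle\omega_a^i,\omega_b^i\rangle)$) does not follow from $L^1$-smallness of its Laplacian under only a lower Ricci bound. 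This is exactly why Colding could not conclude that the Albanese map is a local diffeomorphism and why the diffeomorphism in the theorem is \emph{not} produced by $F_i$ at all: one instead uses the non-collapsed GH-closeness of $M_i$ to the smooth flat torus together with the intrinsic Reifenberg theorem of \cite{ChCoI}, which builds a different map by mollified distance functions. To repair your proof, replace the final step by this citation (or supply a genuinely new argument for everywhere-nondegeneracy of almost-splitting maps, which is a substantial open-ended task, not a ``soft'' endgame).
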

This theorem can also be viewed as a quantitative version of the Cheeger--Gromoll
splitting theorem \cite{CG71} when we consider the universal covering space of
the given manifold.
\begin{Remark}
In \cite{Colding97}, Colding proved that $M$ is homotopic to the torus when
$\dim M=3$, and is homeomorphic to the torus when $\dim M>3$. The diffeomorphism
statement was later proven in the joint work of Cheeger and Colding using the
Reifenberg method; see \cite[Appendix~A]{ChCoI}.
\end{Remark}

Obviously, the weaker assumption of almost non-negative Ricci
curvature by itself is not enough to detect the infranil manifold structure, and
certain extra assumptions should be expected. The second-named author proposed
in 2014 to study Riemannian manifolds with \emph{locally bounded Ricci covering
geometry}, considering those manifolds with Ricci curvature bounded below and
non-collapsing local universal covering spaces. Much progress has been made
since then (see Section~\ref{section3}), including the following almost flat manifold theorem due
to the second-named author and his collaborators~\cite{HKRX18}.
 \begin{Theorem}[bounded Ricci covering geometry, 2018]\label{thm:
HKRX18A} Given $m,v>0$, there is a uniform constant $\varepsilon(m,v)\in (0,1)$
such that if a closed $m$-dimensional Riemannian manifold $(M,g)$
sa\-tisfies
\begin{gather*}
\diam(M,g)^2\Rc_g \ge -\varepsilon^2 g\qquad\text{and}\qquad
 |B_{\tilde{g}}(\tilde{p}, 1) | \ge v,
\end{gather*}
then $M$ is diffeomorphic to an infranil manifold. Here $\tilde{p}$ is any
point in $\tilde{M}$, the universal covering space of~$M$ equipped
with the covering metric~$\tilde{g}$.
\end{Theorem}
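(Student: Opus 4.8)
The plan is to deduce Theorem~\ref{thm: HKRX18A} by reducing the almost non-negative Ricci hypothesis, together with the non-collapsing of the universal cover, to Gromov's almost flat manifold theorem (Theorem~\ref{thm: Gromov78}) applied to a \emph{smoothed} metric. The scaling is arranged so that $\diam(M,g)=1$ and $\Rc_g\ge-\varepsilon^2 g$; passing to the universal cover $\big(\tilde M,\tilde g\big)$ one has $\Rc_{\tilde g}\ge -\varepsilon^2\tilde g$ and the non-collapsing bound $\big|B_{\tilde g}(\tilde p,1)\big|\ge v$ at every $\tilde p$. The first step is a \emph{diameter/covering estimate}: an almost non-negatively Ricci curved manifold with maximal-type symmetry has a deck group $\pi_1(M)$ of polynomial growth, and by the Cheeger--Colding almost splitting theorem (a quantitative splitting, cf.\ the Remark after Theorem~\ref{thm: CG97}) one controls the geometry of $\tilde M$ on bounded balls. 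The key point is that $v>0$ prevents the local covers from collapsing, so $\big(\tilde M,\tilde g\big)$ has bounded geometry at scale $1$ in a measure-theoretic sense (volume lower bound plus Ricci lower bound give, via Bishop--Gromov, a genuine non-collapsing ball-volume estimate at all smaller scales).

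The second and central step is \textbf{Ricci flow smoothing}. I would run the Ricci flow $\partial_t g_t=-2\Rc_{g_t}$ starting from $g$ (equivalently, equivariantly from $\tilde g$ on $\tilde M$, which is legitimate since the flow respects the isometric $\pi_1(M)$-action) and invoke the local smoothing estimates available under a lower Ricci bound together with a local non-collapsing (Ricci covering) assumption — precisely the techniques the survey advertises in Section~\ref{section4}, in the lineage of Dai--Wei--Ye (Theorem~\ref{thm: DWY96}) and the more recent pseudolocality-type results for locally bounded Ricci covering geometry. These give, for a definite time $t_0=t_0(m,v)>0$, a metric $g_{t_0}$ with $\diam(M,g_{t_0})\le 2$, say, and a genuine two-sided curvature bound $\sup_M|\K_{g_{t_0}}|\le C(m,v)\,\varepsilon'$ where $\varepsilon'\to 0$ as $\varepsilon\to 0$; moreover the flow moves the metric only by a controlled amount in $C^0$, so $\diam(M,g_{t_0})$ is comparable to $\diam(M,g)$ after rescaling. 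Rescaling $g_{t_0}$ so that its diameter is $1$, one then has $\diam(M,g_{t_0})^2\max_{\wedge^2 TM}|\K_{g_{t_0}}|\le C(m,v)\varepsilon'$, which is $\le\varepsilon(m)^2$ once $\varepsilon=\varepsilon(m,v)$ is chosen small enough.

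The third step is then immediate: apply Gromov's almost flat manifold theorem (Theorem~\ref{thm: Gromov78}) to $(M,g_{t_0})$ to conclude that $M$ is diffeomorphic to an infranil manifold. Since the conclusion is purely about the diffeomorphism type of the underlying manifold, it is insensitive to the smoothing, and we are done.

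The hard part will be the smoothing step, and specifically obtaining the \emph{two-sided} curvature bound $|\K_{g_{t_0}}|\le \Psi(\varepsilon|m,v)$ rather than merely a bound deteriorating like $t^{-1}$. The upper bound $|\Rm_{g_t}|\le C/t$ is the standard output of Ricci-flow smoothing under a lower Ricci bound and local non-collapsing, but to feed Theorem~\ref{thm: Gromov78} one needs the curvature, measured against the evolved diameter, to be small — not just finite. The mechanism for this is that the almost non-negative Ricci bound is scale-invariant in the relevant regime: blowing up at scale $\sqrt{t}$, the rescaled initial data has Ricci bounded below by $-\varepsilon^2 t\to 0$ and, crucially, is non-collapsed because of the $v$-bound, so a contradiction/compactness argument (a sequence $\varepsilon_i\to 0$ with curvature not going to zero at the smoothing time would subconverge to a Ricci-flat, hence by Bishop--Gromov and the splitting theorem flat, limit, contradicting the curvature being bounded away from $0$) forces $|\Rm_{g_{t_0}}|\,t_0\to 0$. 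Making this quantitative — i.e.\ choosing $t_0$ uniformly and extracting the explicit $\Psi$ — together with verifying that the diameter does not degenerate along the flow (which again uses the lower Ricci bound plus a distance-distortion estimate for Ricci flow), is where the real work lies.
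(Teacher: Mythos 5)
Your overall strategy---run the Ricci flow for a definite time, obtain a two-sided curvature bound that is small against the (evolved) diameter, then quote Gromov's Theorem~\ref{thm: Gromov78}---is exactly the route this survey indicates for Theorem~\ref{thm: HKRX18A} (the paper itself only cites \cite{HKRX18} and sketches the mechanism through Theorem~\ref{thm: HKRX18b} and Lemma~\ref{lem: dis_dis1}). However, there is a genuine gap at the center of your smoothing step. The existence-time results you invoke are \emph{not} available under ``Ricci bounded below plus a volume lower bound $v$ on the universal cover'': both Theorem~\ref{thm: HW20b1} and Theorem~\ref{thm: HKRX18b} require the local covers to be \emph{almost Euclidean}---a $(\delta,2\rho)$-Reifenberg condition, equivalently almost maximal volume or almost Euclidean isoperimetric constant---because Perelman's pseudo-locality needs $I_{B}\ge(1-\delta_P)I_m$ or $|B|\ge(1-\delta_P)\omega_m$, not merely $|B|\ge v$. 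Bishop--Gromov does propagate the bound $v$ to smaller scales, as you note, but a volume bound far below $\omega_m$ is compatible with, say, cone-like geometry on the cover, for which no uniform existence time and no $\alpha t^{-1}$ bound with small $\alpha$ can be expected. The missing (and substantive) step is to show that the hypotheses force $\tilde M$ to be $(\delta,\rho)$-Reifenberg at a definite scale $\rho(m,v)$: after rescaling so that $\Rc_g\ge-(m-1)g$ and $\diam(M,g)\le\varepsilon$, the deck group acts on $\tilde M$ with orbit space of diameter $\le\varepsilon$; a contradiction/compactness argument as $\varepsilon\to 0$ produces a non-collapsed Ricci-limit space of the pointed covers on which the limit group acts transitively, and transitivity together with the a.e.\ regularity of non-collapsed limits makes \emph{every} point regular, whence balls of $\tilde M$ at a definite small scale are Gromov--Hausdorff almost Euclidean. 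Only after this step do the pseudo-locality-based smoothing theorems apply.

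Relatedly, your proposed mechanism for the quantitative smallness (``the blow-up limit is Ricci-flat, hence flat'') is not correct as stated---non-collapsed Ricci-flat limits need not be flat, and your blow-up does not use the near-homogeneity that would force flatness. The smallness needed for~(\ref{eqn: AF_Rm}) comes from a different and simpler bookkeeping: in Theorem~\ref{thm: HKRX18b} the constant $\alpha$ can be prescribed arbitrarily small (at the cost of smaller $\delta$ and $\varepsilon$), so at the fixed time $t_0=\varepsilon^2(m,\alpha,\rho)$ one has $\sup_M|\Rm_{g(t_0)}|\le\alpha t_0^{-1}$, while Lemma~\ref{lem: dis_dis1} together with $\diam(M,g)\le\varepsilon\ll\sqrt{t_0}$ (the collapsing threshold is chosen \emph{last}) gives $\diam\big(M,g(t_0)\big)\le 2\Psi_D(\alpha|m)\sqrt{t_0}$; hence $\diam\big(M,g(t_0)\big)^2\max_{\wedge^2TM}\big|\mathbf{K}_{g(t_0)}\big|\le 4\Psi_D(\alpha|m)^2\alpha$, which drops below Gromov's threshold once $\alpha$ is small. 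In short: choose $\alpha$ first, verify the Reifenberg property of the cover, run the flow, and only then shrink $\varepsilon$---that ordering, and the almost-Euclidean-cover verification, are what your write-up is missing.
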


Other types of weaker curvature assumptions include certain mixed curvature
conditions considered by Kapovitch: in \cite{Kapovitch19}, the sectional
curvature lower bound in Gromov's almost flat manifold theorem is weakened to a
Bakry--\'Emery Ricci tensor lower bound.
\begin{Theorem}[mixed curvature conditions, 2019]\label{thm: Kapovitch19}
There exists a uniform constant $\varepsilon(N) \in (0,1)$ such that if an
$m$-dimensional $(m\le N)$ weighted closed Riemannian manifold $\big(M,g,
{\rm e}^{-f}\mathcal{H}^m\big)$ satisfies
\begin{align*}
\diam(M,g)^2\max_{\wedge^2TM}\mathbf{K}_g \le \varepsilon(N)^2\qquad
\text{and}\qquad \diam(M,g)^2 (\Rc_g+\operatorname{Hess}_f ) \ge -\varepsilon(N)^2g,
\end{align*}
then $M$ is diffeomorphic to an infranil manifold.
\end{Theorem}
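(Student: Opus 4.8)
The plan is to reduce the statement to Gromov's almost flat manifold theorem (Theorem~\ref{thm: Gromov78}) by smoothing $g$ along the Ricci flow, following the Ricci-flow smoothing scheme of Dai--Wei--Ye underlying Theorem~\ref{thm: DWY96}; the new difficulty is that only the Bakry--\'Emery, and not the genuine, Ricci curvature is bounded below. After rescaling so that $\diam(M,g)=1$, the hypotheses read $\mathbf{K}_g\le\varepsilon^2$ and $\Rc_g+\operatorname{Hess}_f\ge-\varepsilon^2 g$, and it suffices to produce a metric $g'$ on $M$ with $\diam(M,g')^2\sup_M|\mathbf{K}_{g'}|\le\Psi(\varepsilon|m)$ for some $\Psi(\varepsilon|m)\to 0$, after which Theorem~\ref{thm: Gromov78} applies (the constant $\varepsilon(N)$ in the statement is then $\min_{m\le N}\varepsilon(m)$).

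First I would read off the consequences of the two curvature bounds. By Rauch's comparison theorem the upper sectional bound forces the conjugate radius of $(M,g)$ to be at least $\pi/\varepsilon\ge1$ for $\varepsilon$ small, so by Petersen--Wei--Ye \cite{PWY99} the $C^{\alpha}$-harmonic radius of $(M,g)$ is bounded below by a definite $r_0(m)>0$; thus $g$ carries a priori $C^{\alpha}$-regularity at a definite scale, and G\"unther's inequality yields lower volume bounds for metric balls of radius $\le r_0$ on the universal covering. Next, $\mathbf{K}_g\le\varepsilon^2$ gives $\Rc_g\le(m-1)\varepsilon^2 g$, which with the Bakry--\'Emery hypothesis gives $\operatorname{Hess}_f\ge-m\varepsilon^2 g$. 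Since $M$ is closed $f$ has a maximum where $\nabla f=0$, and integrating $(f\circ\gamma)''\ge-m\varepsilon^2$ along unit-speed geodesics $\gamma$ gives successively $\operatorname{osc}_M f\le\frac{1}{2}m\varepsilon^2$ and $\sup_M|\nabla f|_g\le m\varepsilon^2$; moreover, tracing $\operatorname{Hess}_f\ge-m\varepsilon^2 g$ and using $\int_M\Delta f\,dV_g=0$ gives $\int_M|\Delta f|\,dV_g\le 2m^2\varepsilon^2|M|_g$, so the negative part of $\Rc_g$ has $L^1$-norm at most $C(m)\varepsilon^2|M|_g$. In short, the density $e^{-f}\mathcal{H}^m$ is a $(1+O(\varepsilon^2))$-perturbation of $\mathcal{H}^m$, and the genuine Ricci curvature is bounded above pointwise but only bounded below in an averaged ($L^1$) sense.

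Now I would run the Ricci flow $g(t)$ with $g(0)=g$, coupling it if convenient with the conjugate heat equation for the density so that the weighted structure evolves consistently. The upper bound $\Rc_g\le(m-1)\varepsilon^2 g$ precludes fast shrinking and should produce a definite existence time of order $\varepsilon^{-2}$; the conjugate-radius / harmonic-radius lower bound of the previous step, propagated along the flow, rules out collapse-type curvature concentration and, through interior estimates, gives $|\mathbf{K}_{g(t)}|\le C(m)/t$ together with persistence of the Ricci bounds (the averaged lower bound and the pointwise upper bound are to be upgraded to a pointwise lower bound for $t>0$ by a heat-kernel / De Giorgi--Moser argument, using the smallness of the density perturbation). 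At a time $t=t(\varepsilon)$ of order $\varepsilon^{-2}$ this yields $g'=g(t)$ with $\sup_M|\mathbf{K}_{g'}|\le C(m)\varepsilon^2$, $\diam(M,g')\le1+\Psi(\varepsilon|m)$ and $g'$ still $C^1$-close to $g$, whence $\diam(M,g')^2\sup_M|\mathbf{K}_{g'}|\le\Psi(\varepsilon|m)$ and Theorem~\ref{thm: Gromov78} finishes the proof.

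The crux is this Ricci-flow step. Because the Bakry--\'Emery bound allows $\Rc_g$ to be very negative on small sets there is no pointwise lower Ricci bound to start from, so the classical smoothing estimates cannot be quoted verbatim; one must establish that the Ricci flow nevertheless regularizes on a definite time scale under a one-sided (upper) Ricci bound together with an $L^1$ lower Ricci bound and a conjugate-radius lower bound. This is precisely where the interaction between the upper sectional bound (regularity, absence of shrinking, harmonic-radius control) and the weak lower Ricci-type bound (control of collapse) must be used, and it is the technical heart of the argument. A tempting shortcut, trading the analytic difficulty for a geometric one, is to pass to the warped product $M\times_{\phi}\mathbb{T}^{N-m}$ with $\phi=e^{-f/(N-m)}$, whose Ricci curvature along the base directions equals the $N$-Bakry--\'Emery tensor and so upgrades the Bakry--\'Emery hypothesis to a genuine two-sided Ricci bound; but controlling the mixed sectional curvatures and the fibre Ricci curvature of this warped product requires a pointwise upper bound on $\operatorname{Hess}_f$, which the hypotheses do not obviously provide, so one is led back to the same underlying issue.
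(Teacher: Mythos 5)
Your overall skeleton -- smooth $g$ by Ricci flow and then invoke Theorem~\ref{thm: Gromov78} -- is indeed the mechanism behind this result (which is quoted here from \cite{Kapovitch19}; the survey only indicates how its proof goes in Sections~\ref{section4.1}--\ref{section4.2}). But your write-up leaves exactly the decisive step unproved and supports it with claims that do not hold. The appeal to Petersen--Wei--Ye \cite{PWY99} for a harmonic-radius lower bound is unjustified: their estimates require pointwise Ricci bounds in addition to the conjugate radius bound, and under the present hypotheses the genuine Ricci (indeed sectional) curvature admits no pointwise lower bound -- by \cite[Lemma~8.1]{Kapovitch19}, quoted right after the theorem in this survey, it can be arbitrarily negative on small sets, so no a priori regularity of $g$ at a definite scale follows from the hypotheses alone. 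Similarly, ``persistence of the Ricci bounds'' along the flow and the proposed heat-kernel/De~Giorgi--Moser upgrade of the $L^1$ lower Ricci bound to a pointwise one for $t>0$ is precisely the statement that needs proof; you offer no argument and yourself label it the ``technical heart,'' so the proposal is a plan, not a proof. In the actual argument the missing input is geometric, not parabolic: one shows, via the asphericity theorem \cite[Theorem~5.3]{Kapovitch19} (and induction on dimension), that the local universal covering spaces are almost Euclidean in the volume/isoperimetric sense, and then Perelman's pseudo-locality applied to the covering flow (exactly as in Theorems~\ref{thm: HW20b1} and~\ref{thm: HKRX18b}) yields a definite existence time with $|\Rm_{g(t)}|\le \alpha t^{-1}+\varepsilon_P^{-2}$.

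The quantitative endgame is also off. Nothing in the hypotheses gives existence, let alone the bound $|\mathbf{K}_{g(t)}|\le C/t$, up to times of order $\varepsilon^{-2}$ after normalizing $\diam=1$; pseudo-locality-type results give only a fixed existence time, and with a curvature bound of the form $\alpha t^{-1}$ the distance distortion over an interval $[0,\varepsilon^{-2}]$ is not controlled, so the claim that $g(t(\varepsilon))$ has $\sup_M|\mathbf{K}|\le C\varepsilon^2$ with $\diam\le 1+\Psi$ is unsubstantiated. The smallness of $\diam^2\max|\mathbf{K}|$ for the smoothed metric is instead obtained by exploiting the scale invariance of the hypotheses: rescale so that $\diam(M,g)=\nu(m)$ is a small constant, smooth for a definite time to get $\sup_M|\mathbf{K}_{g(t_0)}|\le C(m)$, check via a distortion estimate as in Lemma~\ref{lem: dis_dis1} that the diameter stays comparable to $\nu$, and choose $\nu$ with $\nu^2C(m)$ below Gromov's constant. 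Your preliminary estimates on $f$ (oscillation, gradient, the $L^1$ bound on $(\Delta f)_+$) are fine, and your warped-product remark correctly diagnoses why that shortcut fails, but without the almost-Euclidean-local-cover input the Ricci flow step in your outline remains open, and with it the whole proof.
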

We notice here that the lower bound of the Bakry--\'Emery Ricci tensor
$\Rc_g+\operatorname{Hess}_f$ is essentially weaker than the corresponding Ricci curvature
lower bound, which would imply almost non-negative sectional curvature in the
context: as shown in \cite[Lemma~8.1]{Kapovitch19}, on the $2$-torus there is a
sequence of Riemannian metrics satisfying the assumptions of the theorem, but
the minimum of their sectional curvature (in $\wedge^2T\mathbb{T}^2$) have no
finite lower bound.
We also point out that it is more natural to consider the Bakry--\'Emery Ricci
curvature lower bound in the collapsing setting: see the work of Lott~\cite{Lott03}.

\subsection{Collapsing manifolds with Ricci curvature bounded below}\label{section2.3}
More generally, one may consider the Gromov--Hausdorff limits of manifolds in
the collec\-tion~$\mathcal{M}_{\rm Rc}(m)$ of complete Riemannian $m$-manfiolds with
the lowest eigenvalue of the Ricci tensor uniformly bounded below by $-(m-1)$.
While the Gromov--Hausdorff limits of a sequence in $\mathcal{M}_{\rm Rc}(m)$ with an
extra uniform volume lower bound (a \emph{non-collapsing sequence}) has been
thoroughly investigated through the works \cite{ChCoI, CJN18}, our understanding
of a possibly collapsing sequence in $\mathcal{M}_{\rm Rc}(m)$ (i.e., a sequence
\emph{without} volume lower bound) is very limited. The ideal here is to
develop a parallel theory as the collapsing geometry with bounded
sectional curvature, and describe the geometry around points where the
sectional curvature becomes unbounded.

Recall that in the classical theory, the collapsing with bounded sectional
curvature is caused by the extra symmetry of the infranil fibers. It is
therefore natural to focus on the local isometries of manifolds in
$\mathcal{M}_{\rm Rc}(m)$. The understanding of such
local isometry is encoded in the \emph{fibered fundamental group}
$\Gamma_{\delta}(p)$, defined for any $p\in M$ and $\delta\in (0,1)$ as
\begin{align*}
\Gamma_{\delta}(p) := \operatorname{Image} [\pi_1(B_g(p,\delta),p)\to \pi_1(B_g(p,2),p) ].
\end{align*}
This group collects all loops contained in
$B_g(p,\delta)$ and based at $p$, but that are allowed to deform within
$B_g(p,2)$. Concerning the structure of such groups, a key conjecture due to
Gromov states that the fibered fundamental group is almost nilpotent. This
conjecture is confirmed by Kapovitch and Wilking in \cite{KW11}.
\begin{Theorem}[generalized Margulis lemma, 2011]\label{thm: KW11}
There are uniform constants $C(m)\ge 1$ and $\varepsilon(m)\in (0,1)$ such that
for any $(M,g)\in \mathcal{M}_{\rm Rc}(m)$ and any $p\in M$, the fibered
fundamental group~$\Gamma_{\varepsilon(m)}(p)$ contains a nilpotent sub-group
of nilpotency rank $\le m$ and index $\le C(m)$.
\end{Theorem}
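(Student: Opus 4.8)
The plan is to argue by contradiction and compactness, using equivariant Gromov--Hausdorff convergence together with the structure theory of Ricci limit spaces, organised around a double induction on the dimension $m$ and on an auxiliary scale parameter. First I would reformulate the claim in terms of the subgroup $\Gamma\le\pi_1(B_g(p,2),p)$ generated by homotopy classes of loops based at $p$ of length less than $\varepsilon$; by Gromov's short generator lemma this subgroup admits a generating set whose size is bounded in terms of $m$, so it suffices to produce uniform $\varepsilon(m)\in(0,1)$ and $C(m)\ge 1$ for which $\Gamma$ contains a nilpotent subgroup of nilpotency rank $\le m$ and index $\le C(m)$ (one then checks this implies the same for $\Gamma_{\varepsilon(m)}(p)$). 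Suppose no such constants exist: there is a sequence $(M_i,g_i)\in\mathcal{M}_{\rm Rc}(m)$ with points $p_i\in M_i$ and $\varepsilon_i\to 0$ such that the corresponding $\Gamma_i$ fails the conclusion for every choice of bounds. Passing to the (incomplete) Riemannian universal covering $\widetilde{B_{g_i}(p_i,2)}$ with a lift $\tilde p_i$, the group $\Gamma_i$ is realised by a subgroup $\widehat\Gamma_i$ of deck transformations whose chosen generators displace $\tilde p_i$ by less than $2\varepsilon_i$.

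Next I would choose, for each $i$, a rescaling factor $\lambda_i$ and pass to a pointed equivariant Gromov--Hausdorff limit $\big(\widehat Y,\hat y,G\big)$ of $\big(\lambda_i\cdot\widetilde{B_{g_i}(p_i,2)},\tilde p_i,\widehat\Gamma_i\big)$, where $G$ is the closed group of isometries of $\widehat Y$ obtained as the limit of the $\widehat\Gamma_i$ via Fukaya--Yamaguchi's compactness for isometric actions. The choice of $\lambda_i$ is the engine of the argument: one zooms in to the first scale at which the geometry near $\hat y$ exhibits definite symmetry, which presents itself either (a) through the appearance of almost-lines, or (b) through a nontrivial but controlled limit orbit $G\cdot\hat y$. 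In case~(a), the Cheeger--Colding almost splitting theorem forces an isometric splitting $\widehat Y=\mathbb{R}^k\times Z$ with $k\ge 1$, respected by $G$; the $\mathbb{R}^k$-translations contribute an abelian normal factor, and the residual action on $Z$ -- whose Hausdorff dimension is at most $m-k<m$ -- is restored to the shape of a local fibered fundamental group by a further rescaling and is controlled by the induction hypothesis in dimension $\dim Z<m$. In case~(b), the action fixes $\hat y$ up to bounded displacement, so after passing to the tangent cone one may assume $G$ fixes $\hat y$; the key input here is that the isometry group of a Ricci limit space is a Lie group -- this uses Cheeger--Colding's metric-measure rigidity to rule out small subgroups, together with the Montgomery--Zippin solution of Hilbert's fifth problem -- and since (by the scale choice) the generators of the limit of $\widehat\Gamma_i$ lie arbitrarily close to the identity of this Lie group, the Zassenhaus--Margulis lemma shows they generate a nilpotent subgroup, with nilpotency rank and index bounded in terms of $\dim G$, hence in terms of $m$.

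Finally I would transfer the (almost) nilpotency of the limit group back to the sequence, so as to contradict the failure of the conclusion for $\Gamma_i$ with $i$ large. This transfer is the step I expect to be the main obstacle, alongside establishing the Lie group structure of the limit isometry group: it requires an effective form of the Zassenhaus--Margulis lemma together with careful tracking of which short generators of $\Gamma_i$ survive under the Gromov--Hausdorff approximation, so that a group acting on a space equivariantly close to one carrying a nilpotent Lie group action, and generated by sufficiently short elements, is itself almost nilpotent with comparable constants. This must moreover be interwoven with a proof that the inductive zooming-in terminates: the number of mutually ``independent'' short generators peeled off at successive scales is bounded by $m$, which both prevents an infinite regress and delivers the asserted bounds -- nilpotency rank $\le m$ and index $\le C(m)$.
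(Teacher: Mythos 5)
This statement is quoted in the survey from Kapovitch--Wilking \cite{KW11}; the paper itself gives no proof, so your proposal can only be measured against the known argument, whose overall architecture (contradiction, rescaling, equivariant pointed Gromov--Hausdorff limits, splitting, Lie-group structure of limit isometry groups, Zassenhaus, induction) your outline does echo at a high level. But as written it has genuine gaps at exactly the points that constitute the substance of the theorem. First, the inductive step is not legitimate in the form you state it: in case (a) you propose to handle the residual action on the factor $Z$ ``by the induction hypothesis in dimension $\dim Z<m$,'' but the statement being proved concerns fibered fundamental groups of \emph{manifolds} with $\Rc\ge -(m-1)g$, whereas $Z$ is merely a Ricci limit space carrying an isometric action; the induction hypothesis simply does not apply to it. Kapovitch--Wilking circumvent this by proving a much stronger, carefully formulated induction statement about \emph{sequences} of manifolds together with groups acting on their covers (their ``Induction Theorem'' for $C$-nilpotency), and the engine that makes the zooming-in terminate without losing the manifold setting is their Rescaling Theorem, a difficult result in its own right built on Cheeger--Colding splitting maps and a covering argument along paths. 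Your phrase ``zoom in to the first scale at which the geometry exhibits definite symmetry'' is a placeholder for precisely this missing theorem.

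Second, the back-transfer from the limit to the sequence does not work as sketched. Small displacement of a generator at the single point $\tilde p_i$ does not make the limiting isometry close to the identity of $G$ (closeness in $G$ requires control of displacement on a whole ball), the discrete groups $\widehat\Gamma_i$ typically limit onto a non-discrete $G$, and elements of $\widehat\Gamma_i$ converging into a Zassenhaus neighborhood of $G$ may still generate a subgroup that is far from nilpotent; nilpotency of the subgroup of $G$ they limit to gives, by itself, no index-$C(m)$ nilpotent subgroup of $\Gamma_i$. You flag this transfer as ``the main obstacle,'' which is accurate, but acknowledging it is not supplying it: in \cite{KW11} this step, together with the bound on the nilpotency rank by $m$ (which requires relating independent short generators at successive scales to dimension/orbit growth), occupies the bulk of the proof. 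The same caveat applies to your appeal to the Lie-group structure of $\operatorname{Isom}$ of a Ricci limit space: it is a correct input, but it must be combined with the gap/rescaling machinery above, not used directly on the fibered fundamental group. So the proposal is a reasonable roadmap of the known strategy, but the decisive steps -- the rescaling theorem, the correctly formulated induction statement, and the effective transfer of almost nilpotency back to $\Gamma_{\varepsilon(m)}(p)$ -- are missing rather than proved.
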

Based on a rescaling and contradiction argument, this theorem is strengthened by
Naber and Zhang \cite{NaberZhang} when a geodesic ball is Gromov--Hausdorff close
to a (lower) $k$-dimensional Euclidean $r$-ball (denoted by $\mathbb{B}^k(r)$
for any $r>0$): if $d_{\rm GH}\left(B_g(p,2), \mathbb{B}^k(2)\right)
<\varepsilon(m)$ for some $k\le m$, then $\rank \Gamma_{\varepsilon(m)}(p)\le
m-k$. More significantly, they discovered certain topological conditions that
guarantee a very strong regularity of the metric.
\begin{Theorem}[$\varepsilon$-regularity for Ricci curvature, 2018] \label{thm: NZ18}
For any $\varepsilon>0$ there is a uniform constant $\delta(m,\varepsilon)\in
(0,1)$ such that if $(M,g,p)$ is a pointed Riemannian $m$-manifold satisfying
$\Rc_g\ge -(m-1)g$ and $B_g(p,2)\Subset B_g(p,4)$, then for any normal
covering $\pi\colon (W,\hat{p})\to (B_g(p,2),p)$ with $\pi(\hat{p})=p$, covering
metric $\hat{g}$ and deck transformation group $G$, if
\begin{enumerate}\itemsep=0pt
 \item[$1)$] $d_{\rm GH}\big(B_g(p,2),\mathbb{B}^k(2)\big)<\delta$, and
 \item[$2)$] the almost nilpotent group $\widehat{G}_{\delta}(p):= \langle
 \gamma\in G\colon d_{\hat{g}}(\gamma.\hat{p},\hat{p})<2\delta \rangle$
 satisfies \begin{align*}\rank \widehat{G}_{\delta}(p) = m-k,\end{align*}
\end{enumerate}
 then for some $r\in (\delta,1)$ it holds that
\begin{align*}
d_{\rm GH}\big(B_{\hat{g}}(\hat{p},r), \mathbb{B}^m(r)\big) < \varepsilon r.
\end{align*}
 \end{Theorem}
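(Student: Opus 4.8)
The plan is to argue by contradiction, combining Gromov's precompactness theorem, the equivariant Gromov--Hausdorff convergence of Fukaya and Yamaguchi, the generalized Margulis lemma (Theorem~\ref{thm: KW11}) together with the Kapovitch--Wilking fibration theory underlying its proof, and the structure theory of Ricci limit spaces due to Cheeger and Colding. Suppose the conclusion fails for some $\varepsilon_0>0$: there exist pointed $m$-manifolds $(M_i,g_i,p_i)$ with $\Rc_{g_i}\ge-(m-1)g_i$ and $B_{g_i}(p_i,2)\Subset B_{g_i}(p_i,4)$, normal coverings $\pi_i\colon(W_i,\hat p_i)\to(B_{g_i}(p_i,2),p_i)$ with deck groups $G_i$ and covering metrics $\hat g_i$, satisfying hypotheses $(1)$ and $(2)$ with $\delta=\delta_i\to 0$, yet with $d_{\rm GH}\big(B_{\hat g_i}(\hat p_i,r),\mathbb{B}^m(r)\big)\ge\varepsilon_0 r$ for every $r\in(\delta_i,1)$. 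First I would pass, along a subsequence, to a pointed Gromov--Hausdorff limit $(W_i,\hat g_i,\hat p_i)\to(W_\infty,\hat p_\infty)$, which carries $\Rc\ge-(m-1)$ in the synthetic sense; since the $G_i$ act isometrically, equivariant convergence produces a closed subgroup $G_\infty\le\operatorname{Isom}(W_\infty)$, and, because $W_i/G_i=B_{g_i}(p_i,2)$ and $\delta_i\to 0$, hypothesis $(1)$ forces $W_\infty/G_\infty$ to be isometric to $\mathbb{B}^k(2)$.

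The decisive step is the analysis of the short-generator subgroups $\widehat{G}_{\delta_i}(p_i)=\langle\gamma\in G_i\colon d_{\hat g_i}(\gamma.\hat p_i,\hat p_i)<2\delta_i\rangle$. By the generalized Margulis lemma these are almost nilpotent, and by hypothesis $(2)$ of rank exactly $m-k$, which is precisely the equality case of the Naber--Zhang rank bound. Here I would invoke the nilpotent-structure machinery from the proof of Theorem~\ref{thm: KW11}: it produces, in the equivariant limit, a nilpotent Lie subgroup $N\le G_\infty$ --- the limiting incarnation of the groups $\widehat{G}_{\delta_i}(p_i)$ --- whose orbit $N\cdot\hat p_\infty$ is an embedded submanifold, and the maximal-rank hypothesis forces $\dim(N\cdot\hat p_\infty)=m-k$. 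This is the technical heart of the argument and is where essentially all of the difficulty lies: one must show that the almost nilpotent structure of the finitely generated groups $\widehat{G}_{\delta_i}(p_i)$, generated by elements of displacement $<2\delta_i\to 0$, survives the limit as a genuine nilpotent Lie group action realizing the full dimension $m-k$, which requires the quantitative fibration and gap techniques of Kapovitch--Wilking (and their refinement by Naber--Zhang) rather than soft compactness alone.

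Granting this, I would run a dimension count. The orbit space $W_\infty/\overline N$ surjects onto $W_\infty/G_\infty=\mathbb{B}^k(2)$, and since $\overline N$ has $(m-k)$-dimensional orbits the former is flat of dimension $k$; hence $\dim W_\infty\ge(m-k)+k=m$. As a pointed Gromov--Hausdorff limit of $m$-manifolds with Ricci bounded below has Hausdorff dimension at most $m$, it follows that $\dim W_\infty=m$, i.e., $W_\infty$ is a \emph{non-collapsed} Ricci limit space; in particular Colding's volume continuity theorem gives $|B_{\hat g_i}(\hat p_i,1)|\ge v_0>0$ for $i$ large. Now I would pass to a tangent cone $C$ of $W_\infty$ at $\hat p_\infty$: by Cheeger--Colding it is a non-collapsed metric cone of dimension $m$ with nonnegative Ricci curvature in the synthetic sense. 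The submetry from $C$ onto the tangent cone $\mathbb{R}^k$ of the flat base $W_\infty/\overline N$ lifts straight lines to lines through the vertex, so $C$ splits isometrically as $\mathbb{R}^k\times C'$ with $C'$ a metric cone of dimension $m-k$; since $N\cdot\hat p_\infty$ is a smooth $(m-k)$-submanifold through $\hat p_\infty$ whose tangent space is vertical for the submetry and hence embeds in $C'$ through its vertex, we conclude $C'=\mathbb{R}^{m-k}$, so $C=\mathbb{R}^m$ and $\hat p_\infty$ is a regular point of $W_\infty$. Consequently there is a fixed $r_0\in(0,1)$ with $d_{\rm GH}\big(B_{W_\infty}(\hat p_\infty,r_0),\mathbb{B}^m(r_0)\big)<(\varepsilon_0/2)r_0$, whence $d_{\rm GH}\big(B_{\hat g_i}(\hat p_i,r_0),\mathbb{B}^m(r_0)\big)<\varepsilon_0 r_0$ for all large $i$; but $r_0\in(\delta_i,1)$ eventually, contradicting the choice of the sequence.

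To summarize, the main obstacle is the limit analysis of the groups $\widehat{G}_{\delta_i}(p_i)$ --- extracting a genuine nilpotent Lie group action of the sharp dimension $m-k$ in the equivariant limit. Once this is in hand, the remaining rigidity --- that a non-collapsed Ricci limit fibering by flat orbits over a flat base is locally Euclidean --- follows from the Cheeger--Colding splitting theorem and the structure of tangent cones of non-collapsed Ricci limits, and the desired contradiction is then immediate.
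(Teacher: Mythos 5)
This statement is quoted in the paper as Naber--Zhang's $\varepsilon$-regularity theorem \cite{NaberZhang}; the paper contains no proof of it, so your attempt can only be judged on its own terms.

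The gap sits exactly at the step you yourself label the technical heart, and it is not a gap one can close by citation. From $\rank \widehat{G}_{\delta_i}(p_i)=m-k$ you want a nilpotent subgroup $N\le G_\infty$ of the single equivariant limit, taken at the fixed scale of $B_g(p,2)$, whose orbit through $\hat p_\infty$ has dimension exactly $m-k$. Theorem~\ref{thm: KW11} and the Kapovitch--Wilking machinery give almost nilpotency and the \emph{upper} bound $\rank\le m-k$; the converse transfer ``maximal algebraic rank $\Rightarrow$ maximal limit-orbit dimension, hence non-collapsed cover'' is precisely the content of the Naber--Zhang theorem you are proving, so invoking ``their refinement'' there is circular. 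Moreover, the single-limit framework is structurally inadequate for this transfer: the generators of $\widehat{G}_{\delta_i}(p_i)$, and the further short elements obtained from commutators and large powers, act at widely separated scales and may have rotational parts, so at the fixed scale the closure of the limit orbit can a priori have dimension strictly less than $m-k$, with the missing rank only becoming visible after blowing up. The genuine argument is an induction on the rank with successive rescalings, peeling off one short generator at a time -- which is exactly why the conclusion is asserted only at \emph{some} scale $r\in(\delta,1)$ rather than at a scale determined by the limit at unit size. Without that inductive mechanism your dimension count $\dim W_\infty\ge (m-k)+k=m$ is unjustified and the contradiction never materializes. Two secondary points, fixable but worth noting: the identification of the limit quotient with $\mathbb{B}^k(2)$ and the splitting off of $\mathbb{R}^k$ in $W_\infty$ are more safely obtained by pulling back almost-splitting (harmonic) maps from the base to the cover -- a cover of a collapsed ball is \emph{not} isometric to the base at any definite scale, since deck elements move $\hat p_i$ by less than $2\delta_i$ -- and the incompleteness of the $W_i$ (this is what the hypothesis $B_g(p,2)\Subset B_g(p,4)$ is for) must be handled before applying equivariant compactness and the tangent-cone/submetry lifting step.
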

If we impose the extra assumption of a uniform Ricci curvature upper bound, this
theorem directly proves a local fiber bundle theorem for domains collapsing to
lower dimensional Euclidean balls; see \cite[Proposition~6.6]{NaberZhang}. The
same conclusion actually holds without the Ricci curvature upper bound, and this
has been proven very recently by the authors based on the Ricci flow local
smoothing techniques \cite[Theorem~1.4]{HW20b}; see Theorem~\ref{thm:
local_infranil} in the next section. Notice that the assumption~(1) in
Theorem~\ref{thm: NZ18} is crucial in the original blow up argument, and the
case of orbifold collapsing limit in \cite[Theorem 1.4]{HW20b} is considerably
more difficult; see Remark~\ref{rmk: orbifold_fibration}. Based on these
results, the following theorem is recently proven in \cite{HW20a}.
\begin{Theorem}[rigidity of the first Betti number, 2020]\label{thm: HW20a}
Given $D\ge 1$ and $v>0$, there is a~uniform constant $\varepsilon(m,D,v)\in
(0,1)$ such that if $(M,g)\in \mathcal{M}_{\rm Rc}(m)$ and $(N,h)\in
\mathcal{M}_{\rm Rm}(k,D,v)$ satisfy $d_{\rm GH}(M,N)<\varepsilon(m,D,v)$ with $k\le
m$, then $b_1(M)-b_1(N)\le m-k$. Moreover, if the equality holds, then $M$ is
diffeomorphic to an $(m-k)$-torus bundle over~$N$.
\end{Theorem}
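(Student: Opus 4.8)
The plan is to run the two-step scheme familiar from collapsing theory: first bound the kernel of the induced map on fundamental groups, then, in the equality case, bootstrap this into a genuine torus fibration. Since $N\in\mathcal{M}_{\rm Rm}(k,D,v)$ has uniformly bounded geometry, Cheeger's injectivity radius estimate gives a definite lower bound on $\operatorname{inj}(N)$; hence for $\varepsilon(m,D,v)$ small enough a Gromov--Hausdorff approximation $f\colon M\to N$ can be upgraded to a continuous map surjective on $\pi_1$, and we set $\Gamma:=\ker\bigl(f_{\ast}\colon\pi_1(M)\to\pi_1(N)\bigr)$.

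\textbf{Step 1: the inequality.} The group $\Gamma$ is generated by loops in $M$ that become null-homotopic in $N$; since null-homotopies in $N$ occur at a scale controlled by $\operatorname{inj}(N)$, such a loop is $M$-homotopic to one of length $\le\Psi(\varepsilon\,|\,m,D,v)$. Thus, after joining by a short path, $\Gamma$ is contained in the fibered fundamental group $\Gamma_{\varepsilon(m)}(p)$ of every $p\in M$; rescaling so that the radius-$\rho_0(k,D,v)$ ball about $f(p)$ in $N$ becomes a $2$-ball, we get $d_{\rm GH}\bigl(B_g(p,2),\mathbb{B}^k(2)\bigr)$ small, and the Naber--Zhang refinement of Theorem~\ref{thm: KW11} gives $\rank\Gamma\le m-k$. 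Feeding $1\to\Gamma\to\pi_1(M)\to\pi_1(N)\to1$ into the Lyndon--Hochschild--Serre five-term exact sequence in homology yields
\[
b_1(M)\ \le\ b_1(N)+\rank\bigl(H_1(\Gamma;\mathbb{Z})_{\pi_1(N)}\bigr)\ \le\ b_1(N)+\rank(\Gamma)\ \le\ b_1(N)+(m-k),
\]
where we use that a virtually nilpotent group has abelianization of rank at most its Hirsch length; this is the asserted inequality.

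\textbf{Step 2: rigidity and the main obstacle.} If equality holds then every inequality above is an equality; in particular $\rank\bigl(\Gamma^{\mathrm{ab}}\bigr)=\rank(\Gamma)=m-k$ and the $\pi_1(N)$-coinvariants of $H_1(\Gamma;\mathbb{Q})$ have full rank $m-k$. Pass to a finite-index torsion-free nilpotent subgroup $\Gamma_0\le\Gamma$: the transfer homomorphism gives $\rank\bigl(\Gamma_0^{\mathrm{ab}}\bigr)\ge\rank\bigl(\Gamma^{\mathrm{ab}}\bigr)=m-k$, while the Hirsch length of $\Gamma_0$ is $m-k$; comparing these two quantities through the lower central series forces $[\Gamma_0,\Gamma_0]$ to be finite, hence trivial, so $\Gamma_0\cong\mathbb{Z}^{m-k}$ and $\Gamma$ is virtually $\mathbb{Z}^{m-k}$ of maximal rank. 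The crucial and most delicate point -- this is the ``decoding of topological information'' alluded to in the introduction -- is to convert this \emph{global} maximal-rank statement into the \emph{pointwise} hypothesis needed for the $\varepsilon$-regularity Theorem~\ref{thm: NZ18}: one must show that for every $p\in M$, in the normal cover of $B_g(p,2)$ unwrapping the short loops, the almost nilpotent group $\widehat{G}_{\delta}(p)$ has rank exactly $m-k$, not merely $\le m-k$. I expect this to be the main obstacle, and would attack it by a Mayer--Vietoris argument over a controlled cover of $M$ by such balls, showing that any drop of the local rank at a point would force $\rank\bigl(\Gamma^{\mathrm{ab}}\bigr)<m-k$, a contradiction.

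\textbf{Step 3: fibration and identification.} Granting the pointwise rank condition, Theorem~\ref{thm: NZ18} together with Colding's volume continuity theorem~\cite{Colding97} yields a uniform lower volume bound on the local covers $B_{\hat{g}}(\hat{p},1)$, i.e.\ $M$ has locally bounded Ricci covering geometry. The local infranil fiber bundle theorem (Theorem~\ref{thm: local_infranil}) then produces, over a cover of $N$ by balls of definite size, local fibrations of $M$ with infranil fibers $F$ of dimension $m-k$. The long exact homotopy sequence of such a fibration identifies $\pi_1(F)$ with $\Gamma$: the a~priori kernel coming from $\pi_2(N)$ has Hirsch length $0$ and vanishes because infranil fundamental groups are torsion-free, so $F$ is a closed flat manifold with $\pi_1(F)$ virtually $\mathbb{Z}^{m-k}$. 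The Leray spectral sequence of the global fibration $M\to N$ bounds the ``new'' part of $b_1(M)$ by $\dim H^1(F;\mathbb{R})$, so equality forces $b_1(F)=m-k=\dim F$; by the Bochner technique $F\cong T^{m-k}$ and $\Gamma\cong\mathbb{Z}^{m-k}$. Finally, the local $T^{m-k}$-fibrations are glued into a single smooth $(m-k)$-torus bundle over $N$ exactly as in Fukaya's fiber bundle theorem and Theorem~\ref{thm: singular_fibration} (center-of-mass smoothing of the transition maps on the frame bundle, legitimate because the base carries bounded geometry), giving the desired diffeomorphism of $M$ with an $(m-k)$-torus bundle over $N$.
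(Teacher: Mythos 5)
Your overall architecture (bound the ``collapsed'' contribution to $b_1$ by $m-k$ via the generalized Margulis lemma and its Naber--Zhang refinement; in the equality case feed a pointwise maximal-rank statement into Theorem~\ref{thm: NZ18}, smooth, and invoke fiber bundle theorems) is the right one, but there is a genuine gap at exactly the point where the proof is hardest. In Step~1 you claim that every element of $\Gamma=\ker f_{\ast}$ is $M$-homotopic to a loop of length $\Psi(\varepsilon)$ and hence that $\Gamma\subset\Gamma_{\varepsilon(m)}(p)$ for \emph{every} $p\in M$. Neither assertion is justified: what the standard GH-approximation argument gives is only that $\Gamma$ is \emph{normally} generated by classes of short loops, and a general element is a product of conjugates of short loops by paths whose length is comparable to $\diam(M)\approx D$; such an element need not be homotopic, even within $B_g(p,2)$ (or the rescaled $2$-ball), to a loop contained in $B_g(p,\varepsilon(m))$, so it has no reason to lie in the fibered fundamental group at $p$. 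Without that containment you do not know that $\Gamma$ is virtually nilpotent of rank $\le m-k$ (nor even how to define ``$\rank(\Gamma)$''), so the five-term-sequence estimate in Step~1 and the equality analysis in Step~2 have no foundation. This is precisely the localization difficulty the proof must overcome: in \cite{HW20a} one avoids the global kernel altogether and works with the pseudo-local fundamental groups $\tilde{\Gamma}_{\delta}(p)=\operatorname{Image}[\pi_1(B_g(p,\delta),p)\to\pi_1(M,p)]$ and the subgroup $H_1^{\delta}(M;\mathbb{Z})$ generated by classes of geodesic loops of length $\le 10\delta$, proving first that $b_1(M)-b_1(N)=\rank H_1^{\delta}(M;\mathbb{Z})$ and then that $\rank H_1^{\delta}(M;\mathbb{Z})\le\rank\tilde{\Gamma}_{\varepsilon(m)}(p)\le m-k$ for \emph{every} $p$; the second inequality requires transporting short loops between far-apart base points and is obtained by generalizing Colding--Naber \cite{CN12}, not by elementary conjugation. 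Note that once this chain of inequalities is in place, the pointwise maximal-rank hypothesis of Theorem~\ref{thm: NZ18} in the equality case is automatic -- so the step you defer in Step~2 as ``the main obstacle'', with only a vague Mayer--Vietoris suggestion, is the heart of the theorem and remains unproved in your proposal (and is already used implicitly, without proof, in your Step~1).

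The endgame also differs from the actual argument in a way that matters. After the $\varepsilon$-regularity of Theorem~\ref{thm: NZ18} makes the local covers almost Euclidean, the proof does not glue the local infranil fibrations of Theorem~\ref{thm: local_infranil} over a cover of $N$; it runs the Ricci flow (the smoothing results of Section~\ref{section4}, together with the distance distortion estimates of Section~\ref{section4.3}) to replace $g$ by a nearby metric with bounded sectional curvature that is still Gromov--Hausdorff close to $N$, and then applies Fukaya's fiber bundle theorem (Theorem~\ref{thm: Fukaya1}) \emph{globally} to obtain one fibration $M\to N$ with infranil fiber, which the Betti number equality forces to be a torus. Your proposed patching of local fibrations into a single smooth bundle over $N$ is asserted rather than proved and is itself nontrivial, and the homotopy-exact-sequence identification of $\pi_1(F)$ with $\Gamma$ needs the image of $\pi_2(N)$ treated more carefully than ``it vanishes''. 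These are secondary issues, however; the decisive gap is the missing localization argument described above.
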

Here we notice that the assumptions of Theorem~\ref{thm: NZ18} are purely local,
and a key difficulty in the proof was to localize the topological information
encoded in the first Betti number, which is global in nature. Here we
introduced the so-called \emph{pseudo-local fundamental group}
$\tilde{\Gamma}_{\delta}(p)$, which is defined for any $p\in M$ and $\delta\in
(0,1)$ as $\tilde{\Gamma}_{\delta}(p) :=\operatorname{Image}\left[\pi_1(B_g(p,\delta),p)\to
\pi_1(M,p)\right]$. This concept provides a bridge linking $\pi_1(M,p)$ with
$\Gamma_{\delta}(p)$. We also considered $H_1^{\delta}(M; \mathbb{Z})$,
generated by singular homology classes with a representation by a geodesic loop
of length not exceeding $10\delta$. Under the assumption $d_{\rm GH}(M,N)<\delta$,
it is then shown that $b_1(M)-b_1(N)=\rank H_1^{\delta}(M;\mathbb{Z})$; and
for $\delta<\varepsilon(m)$ sufficiently small, generalizing the work of
Colding and Naber \cite{CN12} it is shown that $\rank H_1^{\delta}
(M;\mathbb{Z})\le \rank \tilde{\Gamma}_{\varepsilon(m)}(p)\le m-k$ for any
$p\in M$. Therefore, by Theorem~\ref{thm: NZ18} the assumption
$b_1(M)-b_1(N)=m-k$ ensures the universal covering of $M$ to locally resemble
the $m$-Euclidean space. We could then run the Ricci flow to obtain a regular
metric that still collapses, and applying Theorem~\ref{thm: Fukaya1} we
established the theorem~-- discussions on such Ricci flow smoothing technique
will be in Section~\ref{section4}.

\section{Locally bounded Ricci covering geometry}\label{section3}
In this section, we discuss the program initiated by the second-named author
around 2014 to investigate Riemannian manifolds with Ricci curvature bounded
below and non-collapsing local universal covering spaces: its current status
and its goal; see also the previous survey \cite{Rong18} by the second-named
author for related discussions.

More precisely, we let $\widetilde{\mathcal{M}}_{\rm Rc}(m,\rho,v)$ denote the
collection of complete $m$-dimensional Riemannian manifolds $(M,g)$ satisfying
$\Rc_g\ge -(m-1)g$ and for any $x\in M$, $|B_{\tilde{g}}(\tilde{x}, \rho)
|\ge v$, where~$\tilde{g}$ is the covering metric of the (incomplete)
Riemannian universal covering space of $B_g(x,\rho)$, and~$\tilde{x}$ is any
point covering~$x$. We call the quantity $|B_{\tilde{g}}(\tilde{x},
\rho)|$ the \emph{local rewinding volume} of~$x$ at scale~$\rho$, denoted
by $\widetilde{\rm Vol}_g(x,\rho)$. So roughly speaking,
$\widetilde{\mathcal{M}}_{\rm Rc}(m,\rho,v)$ consists of manifolds with Ricci
curvature and local rewinding volume (at scale~$\rho$) uniformly bounded below,
and we say that such manifolds have \emph{locally} $(\rho,v)$\emph{-bounded
Ricci covering geometry}, or just locally bounded Ricci covering geometry.

The goal of studying locally bounded Ricci covering geometry is mainly to
establish an analogue, for manifolds in $\widetilde{\mathcal{M}}_{\rm Rc}(m,\rho,v)$
and their (pointed) Gromov--Hausdorff limits, of the nilpotent structure theorey
of Cheeger, Fukaya and Gromov (see Section~\ref{section2.1}).

 As pointed out in the introduction, understanding the collapsing behaviors of
 manifolds of this type is a very natural and immediate step in our study of
 general collapsing phenomena of manifolds with Ricci curvature bounded below.
 If a Riemannian $m$-manifold has sectional curvature uniformly bounded by~$1$
 in absolute value, then it has locally $(\rho(m),v(m))$-bounded Ricci covering
 geometry, where the constants only depend on the dimension~$m$; see~
 \cite{CFG92, Rong20}.

\subsection{Singular infranil fiber bundles}\label{section3.1}
Recall that our ideal of study collapsing geometry with Ricci curvature bounded
below is to recover, at least over most parts of the collapsing limit, the
infranil fiber bundle structure. In the setting of collapsing with locally
bounded Ricci covering geometry, this is indeed the case over the regular part
of the collapsing limit.
\begin{Theorem}[infranil fiber bundle, 2020] \label{thm: HR20}
Given $m,k\in \mathbb{N}$ with $k<m$, $D\ge 1$ and $\rho,v_1,v_2>0$ there is a
uniform constant $\delta(m,D,\rho, v_1,v_2)>0$ such that if $(M,g)\in
\widetilde{\mathcal{M}}_{\rm Rc}(m,\rho,v_1)$ is $\delta$-Gromov--Hausdorff close to
a manifold $(N,h)\in \mathcal{M}_{\rm Rm}(k,D,v_2)$, then there is a fiber bundle
$f\colon M\to N$ which is also a $\Psi(\delta|m,D,\rho,v_1,v_2)$-Gromov--Hausdorff
approximation, with whose fibers diffeomorphic to an $(m-k)$-dimensional
infranil manifold, and whose structure group reduced to a generalized torus
group as described in Theorem~{\rm \ref{thm: singular_fibration}}.
\end{Theorem}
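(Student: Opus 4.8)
The plan is to combine three ingredients: the smoothing of the collapsing manifold by (a short-time) Ricci flow, which on manifolds with Ricci curvature bounded below and locally bounded Ricci covering geometry produces a metric with bounded sectional curvature while staying Gromov--Hausdorff close to the original, together with Fukaya's fiber bundle theorem (Theorem~\ref{thm: Fukaya1}) applied to the smoothed metric, and finally a comparison argument showing that the fiber bundle obtained downstairs is compatible with the original distance structure. The key point is that the local rewinding volume lower bound $\widetilde{\rm Vol}_g(x,\rho)\geq v_1$ is exactly the hypothesis needed to run the Ricci flow smoothing uniformly: passing to the Riemannian universal cover of $B_g(x,\rho)$ turns the collapsing picture into a non-collapsed one with $\Rc\geq -(m-1)g$, and there the local Ricci flow smoothing (the technique surveyed in Section~\ref{section4}, e.g.\ via the results behind Theorem~\ref{thm: local_infranil}) yields a definite smoothing time and uniform curvature bounds $|\Rm_{g(t)}|\leq C(m,\rho,v_1)t^{-1}$ together with $C^1$-closeness $\|g-g(t)\|_{C^1}\leq \Psi$. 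Since the Ricci flow commutes with local isometries, the flow descends from the covers to $M$, so we obtain on $M$ itself a metric $g'=g(t_0)$ with $\sup_M|\mathbf{K}_{g'}|\leq 1$ (after choosing $t_0$ appropriately and rescaling) and $d_{\rm GH}((M,g'),(M,g))<\Psi$.

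With the smoothed metric $g'$ in hand, the argument proceeds as follows. First I would fix the smoothing time and rescale so that $(M,g')\in\mathcal M_{\rm Rm}(m,D')$ for some $D'=D'(m,D,\rho,v_1)$; the diameter bound survives because the Ricci flow moves the metric by a controlled $C^1$ amount over the short time interval. Second, by the triangle inequality $d_{\rm GH}((M,g'),(N,h))\leq d_{\rm GH}((M,g'),(M,g))+d_{\rm GH}((M,g),(N,h))<\Psi+\delta$, which can be made smaller than the threshold $\varepsilon(m,v_2)$ of Theorem~\ref{thm: Fukaya1} by choosing $\delta$ (and hence the smoothing parameters) small. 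Third, applying Theorem~\ref{thm: Fukaya1} to $(M,g')$ and $(N,h)\in\mathcal M_{\rm Rm}(k,D,v_2)$ produces a $C^1$ almost Riemannian submersion $f\colon M\to N$ whose fibers are infranil manifolds of dimension $m-k$. Fourth, since $d_{\rm GH}((M,g),(M,g'))$ is small and $f$ is already a Gromov--Hausdorff approximation with respect to $g'$, the same map $f$ is a $\Psi(\delta|m,D,\rho,v_1,v_2)$-Gromov--Hausdorff approximation with respect to the original metric $g$; the dimension count $\rank\Gamma_{\varepsilon}(p)\leq m-k$ from (the Naber--Zhang refinement of) Theorem~\ref{thm: KW11} confirms the fibers have the right dimension and the structure group reduces to a generalized torus group as in Theorem~\ref{thm: singular_fibration}, because the local symmetry captured by the fibered fundamental group is the same before and after smoothing (Ricci flow respects the local covering structure).

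The main obstacle is the uniform Ricci flow smoothing step: one must show that the lower Ricci bound plus the local rewinding volume bound at scale $\rho$ suffice to guarantee a definite existence time and uniform curvature decay for the Ricci flow, \emph{without} any a priori upper curvature bound on $g$. This is precisely where the locally bounded Ricci covering geometry is indispensable — on the universal cover of a small ball one has a genuine non-collapsing lower volume bound, so one can invoke the pseudolocality-type and heat-kernel estimates (as in the works surveyed in Section~\ref{section4}) to produce the smoothed metric. A secondary technical point is tracking that the smoothing respects the patching of the local covers so that the descended flow on $M$ is well-defined and its curvature bound is global; this follows because the Ricci flow is canonical and hence equivariant under the deck transformations, so the locally defined flows agree on overlaps. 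Once these analytic inputs are granted, the topological conclusion is a black-box application of the classical collapsing theory to the now-smooth metric.
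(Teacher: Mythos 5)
There is a genuine gap, and it sits exactly at the step you call the ``main obstacle'': the claim that the rewinding volume bound $\widetilde{\rm Vol}_g(x,\rho)\ge v_1$ is ``exactly the hypothesis needed'' for a uniform Ricci flow smoothing. It is not. The smoothing results available under a mere lower Ricci bound (Theorems~\ref{thm: HW20b1} and~\ref{thm: HKRX18b}, both resting on Perelman's pseudo-locality) require the local universal covers to be \emph{almost Euclidean} -- either the $(\delta,2\rho)$-Reifenberg condition, or maximal nilpotency rank of the (pseudo-)local fundamental group together with Gromov--Hausdorff closeness of $B_g(p,\cdot)$ to a Euclidean ball quotient, which via Theorem~\ref{thm: NZ18} forces the cover close to $\mathbb{B}^m$. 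A volume lower bound $v_1>0$ on $B_{\tilde g}(\tilde x,\rho)$, which may be far below the Euclidean value, gives neither the almost-Euclidean isoperimetric constant nor the almost-maximal volume needed in the pseudo-locality theorem; and a uniform short-time existence with $|\Rm_{g(t)}|\le C t^{-1}$ for initial data with only $\Rc\ge-(m-1)g$ and a non-sharp noncollapsing bound is not a known theorem in dimension $\ge 4$ -- it is essentially an open problem. This is precisely why the paper uses the smoothing route for Theorem~\ref{thm: HW20a} (where the Betti number hypothesis delivers maximal rank, hence almost-Euclidean covers) but \emph{not} for Theorem~\ref{thm: HR20}. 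A secondary error: the patching claim that the locally defined flows ``agree on overlaps because the Ricci flow is canonical'' fails for the local flows of Lemma~\ref{lem: HW20b0}, which are produced by Hochard's conformal modification near the boundary and depend on those choices; in the actual technique one runs a single flow on a neighborhood in $M$ and lifts to covers only to verify the pseudo-locality hypotheses -- which again needs the almost-Euclidean input you do not have.

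For contrast, the paper's route (summarizing \cite{Huang20} and \cite{Rong20}) avoids smoothing to bounded sectional curvature altogether: the fiber bundle map $f\colon M\to N$ is constructed by the canonical Reifenberg method of \cite{CJN18} applied to the non-collapsed local covers, and the infranil structure of the fibers is established in \cite{Rong20} by an argument on the \emph{ambient} geometry -- explicitly not by applying Theorem~\ref{thm: HKRX18A} to the fibers, since an $f$-fiber need not carry any intrinsic Ricci lower bound; indeed this gives a proof of the almost flat theorem independent of the classical Gromov--Ruh scheme your reduction would feed into. If you want to salvage a flow-based argument, you would first have to prove that the hypotheses of Theorem~\ref{thm: HR20} force the local covers to be $(\delta,\rho)$-Reifenberg (equivalently, to rule out non-flat tangent cones in the limits of the covers); that is a substantive assertion requiring proof, not a consequence of the $v_1$-bound as stated.
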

This theorem summarizes the contributions from \cite{Huang20} and \cite{Rong20}.
In \cite{Huang20}, the existence of the topological fiber bundle map $f$ is
obtained using the canonical Reifenberg method in \cite{CJN18}. However, the
infranil manifold structure of the fiber obtained in \cite{Rong20} is not a
direct application of Theorem~\ref{thm: HKRX18A}, since an $f$-fiber may have no
uniform Ricci curvature lower bound. Considerations on the ambient geometry is
instead carried out in \cite{Rong20} -- this work, when restricting to the
case of almost flat manifolds, provides for the first time an approach entirely
different from the original one in \cite{Gromov78b, Ruh82}; see also
\cite{Rong19}. Notice that the arguments in \cite{Huang20, Rong20} are local
and our statements here are valid even if $N$ is an open set in an
$k$-dimensional manifold.

Besides substaintially generalizing the condition of collapsing with bounded
sectional curvature, the condition of locally bounded Ricci covering geometry is
also more general than the assumption of maximal nilpotency rank discussed in
Theorem~\ref{thm: NZ18}, which plays a key role in the proof of
Theorem~\ref{thm: HW20a}. In fact, even when collapsing with bounded sectional
curvature occurs, the fibered fundamental group (or the pseudo-local
fundamental group) based at the fiber over a corner point cannot have maximal
nilpotency rank. Here we recall that for a collapsing limit space $X$, we have a
singular fiber bundle $f\colon M\to X$ as described in Theorem~\ref{thm:
singular_fibration}, and we say $x\in X$ is a \emph{corner point} if $\dim
G_x>0$. We make, however, the following remark.
\begin{Remark}
When the collapsing limit $X$ is a manifold, these three concepts of collapsing
coincide: both (a) collapsing with locally bounded Ricci covering geometry, and
(b)~collapsing with Ricci curvature bounded below and maximal nilpotency rank at
every point, imply the same collapsing infranil fiber bundle structure arising
from collapsing with bounded sectional curvature, which clearly implies the cases~(a) and~(b).
\end{Remark}

For a general metric space arising as the Gromov--Hausdorff limits of
manifolds in \linebreak $\widetilde{\mathcal{M}}_{\rm Rc}(m,\rho,v)$, we propose the following
conjecture; compare also Theorem~\ref{thm: singular_fibration}.
\begin{Conjecture}[singular nilpotent fibration] Suppose a sequence
$ \{(M_i,g_i) \}\subset \widetilde{\mathcal{M}}_{\rm Rc}(m,\rho,v)$
collapses to a lower dimensional compact metric space~$X$, i.e.,
$M^m_i\xrightarrow{\rm GH} X^k$ where the dimension is the sense of Colding--Naber~{\rm \cite{CN12}}. Then for each~$i$ sufficiently large, there is a singular
fibration $f_i \colon M_i\to X$, such that a regular fiber is an infranil manifold,
and a singular fiber is a finite quotient of an infranil manifold.
\end{Conjecture}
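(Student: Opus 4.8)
The plan is to reconstruct, under locally bounded Ricci covering geometry, the picture of Theorem~\ref{thm: singular_fibration}, with Fukaya's frame-bundle device replaced by a combination of local universal covers, the limit-group technique behind the generalized Margulis lemma (Theorem~\ref{thm: KW11}), and the Ricci flow smoothing of Section~\ref{section4}. First one stratifies $X$ according to the local collapsing behavior: the top stratum $X_{\mathrm{reg}}$ consists of the points near which $M_i$ Gromov--Hausdorff collapses to a genuine $k$-manifold with locally bounded geometry, and the (finitely many) lower strata are the loci where the local limit symmetry group is strictly larger --- in the model case of two-sided curvature bounds these are exactly the corner points $\{\dim G_x>0\}$ of Theorem~\ref{thm: singular_fibration}. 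Over any open $U\Subset X_{\mathrm{reg}}$ the conclusion is already in hand: for $i$ large, Theorem~\ref{thm: HR20} provides a fiber bundle over $U$ with $(m-k)$-dimensional infranil fibers and structure group reduced to a generalized torus group, which is moreover an almost Riemannian submersion; so the real content is concentrated near the singular stratum.

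Near a singular point $x$ one picks $p_i\to x$ and passes to the Riemannian universal cover $\widetilde{B_{g_i}(p_i,\rho)}$. Since $\widetilde{\rm Vol}_{g_i}(p_i,\rho)\ge v$, a subsequence of these non-collapsed pieces converges, in the pointed equivariant Gromov--Hausdorff sense, to a non-collapsed Ricci limit space $(\widetilde{B}_\infty,\widetilde{x}_\infty)$ carrying an isometric action of a closed Lie group $G_x$ arising as the limit of the rewound local fundamental groups; by Theorem~\ref{thm: KW11} this $G_x$ is nilpotent-by-finite, and passing to the limit in $B_{g_i}(p_i,\rho)=\widetilde{B_{g_i}(p_i,\rho)}/\pi_1^{\mathrm{loc}}$ identifies a neighborhood of $x$ in $X$ with a neighborhood of $\widetilde{x}_\infty$ in $\widetilde{B}_\infty/G_x$. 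To promote this limit picture to an actual fibration of $M_i$ one runs the Ricci flow: by the results surveyed in Section~\ref{section4} it exists for a time depending only on $(m,\rho,v)$, commutes with the local deck actions, distorts distances by at most $\Psi$, keeps the local rewinding volume bounded below, and produces a two-sided sectional curvature bound at a definite scale; one thereby reaches a nearby metric $g_i'$ with $\|g_i-g_i'\|_{C^1}<\Psi$, bounded sectional curvature, and still collapsed. To $g_i'$ one applies the classical theory --- Fukaya's local structure theorem \cite[Theorem~0.5]{Fukaya88} together with Theorems~\ref{thm: singular_fibration} and~\ref{thm: CFG} --- obtaining an ${\rm O}(m)$-equivariant nilmanifold fiber bundle on the frame bundle, hence a local singular fibration of $B_{g_i'}(p_i,\rho')$ whose fiber over a regular point is an infranil manifold and whose fiber over $[p_i]$ is a finite quotient of an infranil manifold; the $C^1$-closeness, combined with the identification of both bases with $\widetilde{B}_\infty/G_x$, then converts this into a local singular fibration of $B_{g_i}(p_i,\rho')$ over a neighborhood of $x$ in $X$.

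With finitely many charts of these two types (one per stratum, shrunk near the lower ones) and their local singular fibrations, one glues on overlaps using the compatibility of the local nilpotent Killing structures of \cite{CFG92} together with a center-of-mass / partition-of-unity averaging as in \cite{CFG92, Fukaya88}; finiteness of the stratification and uniformity of all the constants make the procedure terminate, and one is left with $f_i\colon M_i\to X$ that is at once a $\Psi$-Gromov--Hausdorff approximation and an almost Riemannian submersion over $X_{\mathrm{reg}}$, with the asserted fiber types.

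The hard part --- and the reason the conjecture is still open --- is the structure theory underlying the first half of the second paragraph. With only a Ricci lower bound, even together with locally bounded covering geometry, there is no known substitute for Fukaya's frame-bundle argument: the frame bundle of a manifold with Ricci merely bounded below need not have bounded geometry, so one is forced onto the local covers. One must then show that $X$ is a finite-dimensional stratified space whose strata are locally of the form $\widetilde{B}_\infty/G_x$ with $\widetilde{B}_\infty$ a non-collapsed Ricci limit and $G_x$ nilpotent-by-finite, the rank of $G_x$ dropping exactly along the stratification; that the equivariant Reifenberg construction of \cite{Huang20} survives across the singular set of the possibly-singular space $\widetilde{B}_\infty$ compatibly with the $G_x$-action; and that the Ricci flow on the local covers can be run uniformly while remaining adapted to all of the limit symmetry groups simultaneously. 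Patching the local pictures across the passage between strata --- precisely where the cover changes and the symmetry group jumps --- is where the argument currently stalls.
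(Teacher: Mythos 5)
The statement you are trying to prove is stated in the paper as a \emph{Conjecture}: the paper offers no proof of it, and your own closing paragraph concedes that the central step is missing, so what you have written is a program rather than a proof. Beyond that honest admission, two of the steps you treat as ``already in hand'' are themselves gapped. First, the regular part: Theorem~\ref{thm: HR20} requires the base to lie in $\mathcal{M}_{\rm Rm}(k,D,v_2)$, i.e., to be a smooth $k$-manifold with two-sided sectional curvature bounds and a volume (hence injectivity radius) lower bound. For a limit $X$ of a sequence in $\widetilde{\mathcal{M}}_{\rm Rc}(m,\rho,v)$ the only structural information available (the Proposition quoted in Section~\ref{section3.1}) is that every tangent cone is a $k$-dimensional metric cone and $\dim_{\rm H}X=k=\dim_{\rm CN}X$; it is not known that the set of points with Euclidean tangent cones is an open smooth Riemannian manifold with locally bounded geometry, so even the assertion that the bundle structure over $X_{\mathrm{reg}}$ follows from Theorem~\ref{thm: HR20} needs an argument that does not currently exist. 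Second, the smoothing step near the singular stratum: the existence-time lower bounds of Section~\ref{section4} (Theorems~\ref{thm: HW20b1} and~\ref{thm: HKRX18b}, and the orbifold version in Remark~\ref{rmk: orbifold_fibration}) all require the local cover to be almost Euclidean in a quantified sense --- closeness of $B_g(p,\cdot)$ to $\mathbb{B}^k(\cdot)\slash G$ with $G$ \emph{finite} of bounded order, together with a maximal nilpotency rank or Reifenberg hypothesis. Near a corner point the local model has positive-dimensional isotropy, exactly the situation these hypotheses exclude, so the claim that the flow ``exists for a time depending only on $(m,\rho,v)$'' there is unsupported; likewise nothing in the surveyed results gives that the flow preserves the local rewinding volume lower bound or produces a two-sided curvature bound at a definite scale under only $\Rc\ge -(m-1)g$.

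Finally, even granting local smoothings, the metrics $g_i'$ obtained by flowing on different charts are different metrics, and the local singular fibrations they produce need not agree on overlaps; the CFG center-of-mass gluing you invoke is carried out in \cite{CFG92} for a single metric of bounded sectional curvature, and transplanting it across chart-dependent smoothings (precisely across the locus where the symmetry group jumps) is the unresolved core of the problem. In short, your outline is a reasonable heuristic roadmap and is consistent with how the authors frame the difficulty, but it does not constitute a proof, and the paper itself leaves the statement open.
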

Some progress on understanding the structure of a Gromov--Hausdorff limit of a
sequence in $\widetilde{\mathcal{M}}_{\rm Rc}(m,\rho,v)$ has already been made:
see, e.g., \cite{HR20}. Notice that if $ \{(M_i,g_i) \}\subset
\mathcal{M}_{\rm Rc}(m)$ and $M_i\xrightarrow{\rm GH}X$ for some metric space $(X,d)$
with $\diam (X,d)\le D$, then there is a renormalized measure $\nu$ on $X$, and
by the work of Colding and Naber \cite{CN12}, there is a unique $k\le m$ such
that at $\nu$-a.e.\ point of $X$ any tangent cone is isometric to
$\mathbb{R}^k$: this $k$ will be denoted as $\dim_{\rm CN}X$, the dimension of~$X$
in the sense of Colding--Naber. On the other hand, as a metric space one can
talk about the Hausdorff dimension $\dim_{\rm H}X$. When $k=m$ we know that
$\dim_{\rm CN}X=\dim_{\rm H}X$, but when $k<m$ this it remains an open question whether
$\dim_{\rm CN}X=\dim_{\rm H}X$: see, e.g., \cite[Open Question~1.11]{KL18}.
However, for manifolds with locally bounded Ricci covering
geometry, we have the following result~\cite{Rong18}.
\begin{Proposition}
If $ \{(M_i,g_i,p_i) \}\subset \widetilde{\mathcal{M}}_{\rm Rc}(m,\rho,v)$
and $M_i\xrightarrow{p{\rm GH}}X$ with $(X,d,p)$ a pointed metric space, then
there is some $k\le m$ such that any tangent cone at any point of~$X$ is
a~$k$-dimensional metric cone. Moreover, $\dim_{\rm H}X=k=\dim_{\rm CN}X$.
\end{Proposition}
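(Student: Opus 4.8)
The plan is to prove the Proposition in two stages: first establishing that every tangent cone at every point of $X$ is a metric cone of a fixed dimension $k$, and then identifying $k$ with both the Hausdorff dimension and the Colding--Naber dimension of $X$. For the first stage, the key point is that the local bounded Ricci covering geometry is a \emph{scale-invariant} and \emph{limit-stable} condition in the following sense: if $(M_i, g_i, p_i) \in \widetilde{\mathcal{M}}_{\rm Rc}(m,\rho,v)$ and we rescale $g_i$ by large factors $\lambda_i^2 \to \infty$ while passing to the limit, the rewinding volume lower bound persists at the new (larger) scale, so any tangent cone of $X$ is itself a pointed Gromov--Hausdorff limit of a sequence in $\widetilde{\mathcal{M}}_{\rm Rc}(m, \rho', v)$ for suitable $\rho'$; in fact, by a diagonal argument the tangent cones of tangent cones are again tangent cones of the original sequence. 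I would invoke the structure theory for such limits (as developed in \cite{HKRX18, HR20, Rong18}): the rewinding volume bound forces the local universal covers to be non-collapsed, so their limits carry the Cheeger--Colding structure theory, and the deck-transformation group acts so that the quotient $X$ has, at each point, a tangent cone which is a metric cone $C(Z)$ --- here the cone structure comes from the almost-rigidity/volume-cone-implies-metric-cone mechanism applied upstairs on the non-collapsed cover, combined with the fact that the nilpotent local symmetry group of maximal rank acts by translations (along nilpotent directions) that descend to cone rescalings. The uniformity of $k$ across points is the content that needs the covering-geometry hypothesis: without it, different points could have tangent cones of different dimension, but the rewinding non-collapsing prevents the local symmetry rank from jumping, so the "downstairs" dimension $k = m - (\text{rank of the nilpotent structure})$ is locally constant, hence constant on the connected space $X$.

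For the second stage, I would argue $\dim_{\rm H} X = k$ as follows. The inequality $\dim_{\rm CN} X \le \dim_{\rm H} X$ holds in general (a point whose tangent cone is $\mathbb{R}^k$ contributes at least dimension $k$ locally, by a Reifenberg-type or volume comparison argument at small scales). For the reverse, I would use that \emph{every} point (not just $\nu$-a.e.\ point) has a $k$-dimensional conical tangent cone: this gives a uniform lower volume bound (relative to the renormalized limit measure) at all sufficiently small scales around every point, of the form $\nu(B(x,r)) \ge c\, r^k$, and simultaneously an upper bound $\nu(B(x,r)) \le C\, r^k$ coming from the Bishop--Gromov inequality applied to the $M_i$ together with the convergence of renormalized measures. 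A standard covering argument (Vitali) then pins the Hausdorff dimension of $X$ to exactly $k$. Finally $\dim_{\rm CN} X = k$ is essentially immediate once we know the tangent cone at $\nu$-a.e.\ point is $\mathbb{R}^k$ --- and here the \emph{metric cone} conclusion of stage one upgrades to the \emph{Euclidean} conclusion at $\nu$-a.e.\ point via the $\nu$-a.e.\ Euclidean splitting from Colding--Naber \cite{CN12}, since at such points the cross-section $Z$ must be the round sphere $S^{k-1}$.

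The main obstacle I expect is stage one: proving that the tangent cone is genuinely a \emph{metric cone} of constant dimension, rather than merely some limit space. The subtlety is that the fibers of the collapsing --- the local infranil orbits --- need not have any Ricci lower bound (exactly the difficulty noted after Theorem~\ref{thm: HR20}), so one cannot directly apply Cheeger--Colding downstairs on $X$. The resolution must go through the non-collapsed universal covers $\widetilde{B_{g_i}(x_i,\rho)}$, where Cheeger--Colding \emph{does} apply, and then carefully track how the equivariant limit of deck groups (an almost-nilpotent group of the expected rank, by the generalized Margulis lemma, Theorem~\ref{thm: KW11}) descends the cone structure. Controlling the interplay of the volume-cone rigidity upstairs with the group action --- in particular ruling out that the quotient loses the cone structure or changes dimension at special points --- is where the bulk of the work lies, and it relies essentially on the work of \cite{HR20, Rong18} on the structure of these equivariant limits; I would cite those and assemble the conclusion rather than redo that analysis here.
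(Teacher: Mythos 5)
First, a point of comparison: the paper does not actually prove this Proposition -- it is quoted from the survey \cite{Rong18} -- so the only approach to measure yours against is the one implicit in that reference, which your stage one correctly identifies: blow up (the rewinding volume bound survives rescaling by Bishop--Gromov, so tangent cones are again limits of spaces in $\widetilde{\mathcal{M}}_{\rm Rc}$), pass to the non-collapsed local universal covers, and describe $X$ locally as $\widetilde{Y}/G$ for an equivariant limit $\big(\widetilde{Y},G\big)$ with $\widetilde{Y}$ a non-collapsed Ricci limit space and $G$ a Lie group of isometries. Within stage one, however, the two decisive points are asserted rather than argued. (i) The tangent cone of $X$ at $x$ is $C_{\tilde{x}}\big(\widetilde{Y}\big)\big/G_{\infty}$, where $G_{\infty}$ is the blow-up limit of $G$; but the quotient of a metric cone by a closed isometry group is \emph{not} a metric cone in general (e.g., $\mathbb{R}^2$ modulo a single translation is a flat cylinder), so one must use the Lie structure of $G$ to see that $G_{\infty}$ is generated by limits of one-parameter subgroups together with the isotropy, and that such a group leaves the cone structure intact in the quotient. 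Your proposed mechanism -- ``translations along nilpotent directions descend to cone rescalings'' -- is not correct as stated: deck transformations act trivially on $X$ and nothing descends to a dilation. (ii) The constancy of $k$ over $X$, which is the heart of the statement, is exactly what the quotient description has to deliver (locally $k=m-\dim(\text{principal } G\text{-orbit})$, then connectedness of $X$); ``the rewinding non-collapsing prevents the rank from jumping'' is a restatement of the conclusion, not an argument. Deferring these to \cite{HR20, Rong18} is defensible, but then your proof consists of the citation, which is what the paper already does.

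The genuine gap is stage two. The bounds $\nu(B(x,r))\ge c\,r^{k}$ and $\nu(B(x,r))\le C\,r^{k}$ amount to Ahlfors $k$-regularity of the renormalized limit measure, and neither follows from what you invoke: knowing that every tangent cone is a $k$-dimensional metric cone gives no measure estimate at definite scales with uniform constants, and Bishop--Gromov passed to the limit yields only doubling of $\nu$ and a lower volume-ratio bound with exponent $m$ -- it never produces an upper bound with exponent $k$, so that step is simply false as written. Indeed, Ahlfors-type control of $\nu$ is essentially the content of the identity $\dim_{\rm H}X=\dim_{\rm CN}X$, which the paper emphasizes (citing \cite{KL18}) is open for general collapsed Ricci limits; it cannot be obtained by soft measure theory on $X$ alone and must be extracted from the covering hypothesis, e.g.\ by computing $\dim_{\rm H}X$ through the local submetry $\widetilde{Y}\to X$ with $\widetilde{Y}$ non-collapsed of Hausdorff dimension $m$ and $G$ a Lie group. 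Likewise, your parenthetical justification of $\dim_{\rm CN}X\le \dim_{\rm H}X$ (``a point whose tangent cone is $\mathbb{R}^k$ contributes dimension $k$'') is not a proof: Gromov--Hausdorff blow-ups of a metric space can have strictly larger Hausdorff dimension than the space itself, so this inequality too has to come from rectifiability of the regular set (as in \cite{CN12}) or, in this setting, from the same quotient structure that handles stage one.
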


\subsection{Almost maximal local rewinding volume}\label{section3.2}
In this sub-section, we consider an ``extremal'' case of locally bounded
Ricci covering geometry: those manifolds in
$\widetilde{\mathcal{M}}_{\rm Rc}(m,\rho,v)$ with almost maximal local rewinding
everywhere; compare also~\cite{Colding96}. Given $(M,g)\in
\widetilde{M}_{\rm Rc}(m,\rho,v)$, since for any $x\in M$, the local covering
metric has the same Ricci curvature lower bound as the original metric, the
Bishop--Gromov volume comparison is in effect, implying that
$\widetilde{\rm Vol}_g(\tilde{x},\rho)\le \Lambda_{\lambda(x)}^m(\rho)$, which
denotes the volume of a geodesic $\rho$-ball in the space form of sectional
curvature equal to $\lambda(x)$, with $\lambda(x)$ denoting the lowest
eigenvalue of~$\Rc_g$ on~$B_g(x,2\rho)$. When $M$ is compact, we let
$\lambda(M,g):=\min\limits_{x\in M}\lambda(x)$. If, however, we know that the local
rewinding volume is almost maximal, then strong structural results have been
obtained by the second-named author with his collaborators in~\cite{CRX18, CRX19}.
\begin{Theorem}[quantitative space form rigidity, 2019]\label{thm: CRX19a}
Given $m,\rho,v>0$ with $\rho<1$ and a closed manifold $(M,g) \in
\mathcal{M}_{\rm Rc}(m)$ whose Riemannian universal covering $(\tilde{M},
\tilde{g})$ has some $\tilde{p}_0\in \tilde{M}$ satisfying $|B_{\tilde{g}}
(\tilde{p}_0,1) |_{\tilde{g}}\ge v$, then we have the following:
\begin{enumerate}\itemsep=0pt
 \item[$1)$] if $\lambda(M,g)=1$ and $\inf\limits_{x\in M}
 \widetilde{\rm Vol}_g(x,\rho)\ge (1-\varepsilon)\Lambda_{1}^m(\rho)$ for some
 uniform $\varepsilon\in (0,1)$ determined by $m$, $\rho$ and $v$, then $M$ is
 diffeomorphic to a spherical space form by a
 $\Psi(\varepsilon|m,\rho,v)$-isometry;
 \item[$2)$] if $\lambda(M,g)=0$, $\diam (M,g)\le 1$ and $\inf\limits_{x\in M}
 \widetilde{\rm Vol}_g(x,\rho)\ge (1-\varepsilon)\Lambda_0^m(\rho)$ for some
 uniform $\varepsilon\in (0,1)$ determined by $m$, $\rho$ and $v$, then $M$ is
 isometric to a flat manifold; and
 \item[$3)$] if $\lambda(M,g)=-1$, $\diam (M,g)\le d$ and $\inf\limits_{x\in M}
 \widetilde{\rm Vol}_g(x,\rho)\ge (1-\varepsilon)\Lambda_{-1}^m(\rho)$ for some
 uniform $\varepsilon\in (0,1)$ determined by~$d$, $m$, $\rho$ and~$v$, then
 $M$ is diffeomorphic to a hyperbolic manifold by a~$\Psi(\varepsilon|m,\rho,v,d)$-isometry.
\end{enumerate}
\end{Theorem}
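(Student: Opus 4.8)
The plan is to handle the three cases uniformly: pass to the Riemannian universal covering $(\tilde M,\tilde g)$, identify its equivariant Gromov--Hausdorff limit with the relevant model space form, and then descend a stability statement back to $M$. Write $X_\lambda$ for the simply connected complete $m$-manifold of constant curvature with $\Rc\equiv\lambda g$, so that $X_1=S^m$, $X_0=\mathbb{R}^m$, $X_{-1}=\mathbb{H}^m$, and let $\lambda=\lambda(M,g)\in\{1,0,-1\}$ be the ambient Ricci bound.

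First I would argue by contradiction, taking a sequence $(M_i,g_i)$ of counterexamples with the thresholds tending to $0$; set $\Gamma_i:=\pi_1(M_i)$, which acts freely, properly discontinuously and cocompactly on $(\tilde M_i,\tilde g_i)$, with quotient of diameter $\le D$ for a bound $D=D(m,d)$ (for $\lambda=1$ this uses Bonnet--Myers). Since every point of $\tilde M_i$ lies within $D$ of a $\Gamma_i$-orbit of the base point, the hypothesis $|B_{\tilde g_i}(\tilde p_0,1)|\ge v$ and Bishop--Gromov upgrade to a \emph{uniform} non-collapsing $|B_{\tilde g_i}(\tilde x,1)|\ge v'(m,v,D)$ for all $\tilde x$; so, after passing to a subsequence, the pointed equivariant limit $(\tilde M_i,\tilde g_i,\tilde x_i,\Gamma_i)\to(\tilde Y,\tilde d,\tilde y,G)$ exists with $\tilde Y$ an $m$-dimensional, non-collapsed limit space carrying $\Rc\ge\lambda$ in the Cheeger--Colding sense \cite{ChCoI}, and $G\le\mathrm{Isom}(\tilde Y)$ a closed group with $\tilde Y/G=\lim_i M_i$ of diameter $\le D$ (so $\tilde Y$ is compact when $\lambda=1$).

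The crux -- and the step I expect to be the main obstacle -- is to show $(\tilde Y,\tilde d)\cong X_\lambda$. The point is that the almost-maximal \emph{local rewinding} volume must, in the limit, coincide with the genuine volume of balls in $\tilde Y$, forcing equality in Bishop--Gromov. Indeed, for $\tilde x$ over $x\in M_i$ the universal covering $\widetilde{B_{g_i}(x,\rho)}\to B_{g_i}(x,\rho)$ restricts to a local isometry \emph{onto} $B_{\tilde g_i}(\tilde x,\rho)$ whose fibres are cosets of $H_i:=\ker\!\big(\pi_1(B_{g_i}(x,\rho))\to\pi_1(M_i)\big)$, whence $|B_{\tilde g_i}(\tilde x,\rho)|\le\widetilde{\rm Vol}_{g_i}(x,\rho)\le\Lambda^m_\lambda(\rho)$, with a definite loss exactly when $H_i$ has an element displacing $\tilde x$ by less than $2\rho$. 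One must rule out such short elements: the subgroup they generate is almost nilpotent of positive rank by the generalized Margulis lemma (Theorem~\ref{thm: KW11}), and -- acting on the local covering, which by hypothesis and Bishop--Gromov monotonicity is uniformly close, at every scale $\le\rho$, to a ball in $X_\lambda$ -- this symmetry would force the quotient, a portion of the \emph{uniformly non-collapsed} $\tilde M_i$, to collapse along these directions; the delicate rescaling argument excluding this is the technical heart of \cite{CRX18, CRX19}. Granting it, $\widetilde{\rm Vol}_{g_i}(x_i,\rho)-|B_{\tilde g_i}(\tilde x_i,\rho)|\to 0$, so $|B(\tilde y,\rho)|=\Lambda^m_\lambda(\rho)$ by volume continuity under non-collapsed convergence \cite{Colding97}; Bishop--Gromov then propagates maximal volume ratio to every scale and point of $\tilde Y$, and its equality case (Cheeger--Colding) gives $(\tilde Y,\tilde d)\cong X_\lambda$. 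The $\Gamma_i$-actions moreover remain free with no short near-identity elements, which is all the descent below will use.

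Finally, with $\tilde Y\cong X_\lambda$ the non-collapsed Reifenberg/stability theory shows $(\tilde M_i,\tilde g_i)$ is, for large $i$, diffeomorphic to $X_\lambda$ by a $\Psi(\varepsilon_i)$-isometry $\Phi_i$ (the $\Psi$ depending on $m,\rho,v$ and, in case $\lambda=-1$, also $d$). Transporting the $\Gamma_i$-action through $\Phi_i$ yields a group of $\Psi$-almost isometries of $X_\lambda$ acting freely, discretely and cocompactly up to the diameter bound; a standard center-of-mass stabilization conjugates it to a genuine free isometric action by a space-form group $G_i'$, and hence $M_i=\tilde M_i/\Gamma_i$ is diffeomorphic and $\Psi$-close to $X_\lambda/G_i'$ -- this contradicts the choice of $(M_i,g_i)$ and proves (1) and (3). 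For (2) one has moreover $X_0=\mathbb{R}^m$, i.e.\ $M_i$ is diffeomorphic and $C^0$-close to a compact flat manifold while retaining $\Rc_{g_i}\ge 0$; since a closed manifold with $\Rc\ge 0$ that is sufficiently Gromov--Hausdorff close to a flat manifold of the same dimension must itself be flat -- via the Bochner technique \cite{Bochner} on a finite torus cover, equivalently from the exact rigidity forced in the limit -- the conclusion sharpens from ``diffeomorphic'' to ``isometric''. The effective bounds $\Psi$ fall out of the usual contradiction/compactness bookkeeping.
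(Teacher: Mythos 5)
First, a point of reference: the paper does not prove Theorem~\ref{thm: CRX19a} at all -- it is quoted as a survey statement from \cite{CRX18,CRX19} -- so your proposal has to stand on its own, and it does not yet. The decisive step is assumed rather than proven. As your own reduction shows, the hypothesis controls $\widetilde{\rm Vol}_{g_i}(x,\rho)=|B_{\hat g_i}(\hat x,\rho)|$ in the local rewinding cover, and this quantity only \emph{dominates} $|B_{\tilde g_i}(\tilde x,\rho)|$; the inequality points the wrong way for transferring almost-maximal volume to the global universal cover. To bridge it you must exclude nontrivial elements of $H_i=\ker\big(\pi_1(B_{g_i}(x,\rho))\to\pi_1(M_i)\big)$ of displacement less than $2\rho$, and this you explicitly ``grant''. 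The heuristic you offer in its place -- that such a short element would force the uniformly non-collapsed $\tilde M_i$ to collapse along those directions -- does not work as stated: quotienting $B_{\hat g_i}(\hat x,\rho)$ by a short deck transformation only reduces its volume by a bounded factor, which is entirely compatible with the uniform bound $|B_{\tilde g_i}(\tilde x,1)|\ge v'$, so mere non-collapsing yields no contradiction, and almost-maximality of balls in $\tilde M_i$ cannot be invoked without circularity. This exclusion is the heart of the theorem, and it is missing from the proposal.

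The descent is also glossed in a way that hides, and at one point contradicts, the real difficulties. In case (1) the manifolds may be arbitrarily collapsed (lens spaces $S^{2k+1}/\mathbb{Z}_{p_i}$ with $p_i\to\infty$ satisfy all hypotheses), so $\Gamma_i$ \emph{does} contain isometries of arbitrarily small displacement; your claim that the actions retain ``no short near-identity elements'' is false there, and the ensuing steps -- freeness of the limit action on $S^m$ (free actions can converge to non-free ones, which would only give a spherical orbifold), and conjugating an almost-isometric action of a group of unbounded order to a genuine free isometric action -- are exactly where the quantitative work of \cite{CRX18,CRX19} lies; a ``standard center-of-mass stabilization'' does not by itself preserve freeness or handle unbounded $|\Gamma_i|$, and similar care is needed for cocompact lattices in case (3). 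On the positive side, the overall scaffold (equivariant pointed limits of universal covers, Cheeger--Colding almost rigidity \cite{ChCoI}, volume continuity \cite{Colding97}, then stability) is the natural one, and your case (2) endgame -- diffeomorphism to a Bieberbach quotient followed by Bochner \cite{Bochner} on the finite torus cover to upgrade to exact flatness -- is sound once the earlier steps are supplied.
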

Note that manifolds satisfying items~(1) or~(2) in this theorem may be
arbitrarily collapsed. In \cite[Theorem~D]{CRX19}, a quantitative rigidity
theorem for hyperbolic spaces (compare item~(3) in Theorem~\ref{thm: CRX19a})
has been obtained for manifolds in $\mathcal{M}_{\rm Rc}(m)$ in terms of the volume
entropy~\cite{LW10}. In fact, by the assumed uniform lower bound of the local
rewinding volume, it is natural to ask if we can drop the non-collapsing
assumption of the universal covering spaces:
\begin{Conjecture}
Theorem~{\rm \ref{thm: CRX19a}} still holds for $(M,g)\in \mathcal{M}_{\rm Rc}(m)$ even if
we do not assume the existence of $\tilde{p}_0\in \tilde{M}$ so that
$|B_{\tilde{g}} (\tilde{p}_0,1)|_{\tilde{g}}\ge v>0$.
\end{Conjecture}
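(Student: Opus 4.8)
The plan is to reduce the conjecture to Theorem~\ref{thm: CRX19a} itself. Writing $\lambda:=\lambda(M,g)\in\{1,0,-1\}$ for the three cases, I would show that the remaining hypotheses --- almost maximal local rewinding volume $\widetilde{\rm Vol}_g(x,\rho)\ge(1-\varepsilon)\Lambda_\lambda^m(\rho)$ at \emph{every} point, together with the stated curvature bound and, in cases~(2) and~(3), the diameter bound --- already force the Riemannian universal covering $(\tilde M,\tilde g)$ to be non-collapsed at unit scale, with a lower bound $v'$ depending only on $m,\rho$ (and on $d$ in case~(3)). Once a point $\tilde p_0\in\tilde M$ with $|B_{\tilde g}(\tilde p_0,1)|_{\tilde g}\ge v'$ is produced, Theorem~\ref{thm: CRX19a} applies verbatim with $v=v'$; so the whole task is to remove the non-collapsing hypothesis on $\tilde M$ by deducing it.

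\emph{Rigidity of the local covers.} On the local universal covering $\hat U_x$ of $B_g(x,\rho)$ --- which carries the same Ricci lower bound as $g$ over $B_g(x,2\rho)$ --- near-maximality of $\widetilde{\rm Vol}_g(x,\rho)$, together with $\widetilde{\rm Vol}_g(x,\rho)\le\Lambda_{\lambda(x)}^m(\rho)\le\Lambda_\lambda^m(\rho)$ from Bishop--Gromov, forces $|\lambda(x)-\lambda|\le\Psi$ and, via the quantitative volume rigidity of Cheeger--Colding (and the sphere rigidity of Colding when $\lambda=1$, its hyperbolic counterpart when $\lambda=-1$), that $B_{\hat g}(\hat x,(1-\Psi)\rho)$ is $\Psi\rho$-Gromov--Hausdorff close to the ball of radius $(1-\Psi)\rho$ in the simply connected $m$-dimensional space form of constant curvature $\lambda$; moreover, by monotonicity, $|B_{\hat g}(\hat q,s)|\ge c(m,\rho)\,s^m$ for all $\hat q\in B_{\hat g}(\hat x,\rho/2)$ and $0<s\le\rho/2$. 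In particular $M$ has locally bounded Ricci covering geometry, and after running the localized Ricci flow smoothing of Section~\ref{section4} on the $\hat U_x$ --- which respects the local covering maps --- locally bounded \emph{sectional} covering geometry, so that we are essentially back in the classical setting on the covers.

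\emph{Propagation to $\tilde M$.} Let $\pi\colon\tilde M\to M$ be the universal covering, $\tilde x\in\pi^{-1}(x)$, and $\bar U_x$ the connected component of $\pi^{-1}(B_g(x,\rho))$ containing $\tilde x$. Then $\hat U_x$ factors as Riemannian coverings $\hat U_x\to\bar U_x\to B_g(x,\rho)$, the first with deck group $K_x:=\ker\big(\pi_1(B_g(x,\rho),x)\to\pi_1(M,x)\big)$; and since $x$ is the centre of $B_g(x,\rho)$ one checks $B_{\tilde g}(\tilde x,\rho)\subset\bar U_x$. If $\hat U_x\to\bar U_x$ has injectivity radius $\ge s$ at $\hat x$ --- equivalently, every nontrivial $\gamma\in K_x$ moves $\hat x$ by at least $2s$ in $\hat g$ --- then $B_{\hat g}(\hat x,s)\to B_{\tilde g}(\tilde x,s)$ is an isometry, whence $|B_{\tilde g}(\tilde x,s)|=|B_{\hat g}(\hat x,s)|\ge c(m,\rho)\,s^m$. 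So it would suffice to exhibit a definite $s_0=s_0(m,\rho)\in(0,\rho/2]$ and \emph{one} point $x$ at which $K_x$ contains no element moving $\hat x$ by less than $2s_0$: then $|B_{\tilde g}(\tilde x,1)|\ge|B_{\tilde g}(\tilde x,s_0)|\ge c(m,\rho)\,s_0^m>0$, since $s_0<\rho<1$, which is the desired unit-scale non-collapsing.

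\emph{The main obstacle.} The hard part is precisely that last point: ruling out \emph{short} elements of $K_x$, i.e.\ short loops near $x$ that are essential in $B_g(x,\rho)$ yet die in $\pi_1(M)$. Such a $\gamma$ must bound a disk in $M$ that exits $B_g(x,\rho)$; lifting it to $\hat U_x$ enlarges the local symmetry carried by Gromov's short basis of $\pi_1(B_g(x,\rho))$ to one of positive rank, which --- after passing to the Ricci-flow-smoothed cover and invoking the classical collapsing structure theory (Theorem~\ref{thm: HR20}, applied equivariantly) --- forces $\widetilde{\rm Vol}_g(x,\rho)$ to be far from $\Lambda_\lambda^m(\rho)$; were this to happen at every $x$ it would contradict the uniform near-maximality. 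Alternatively, in case~(2) one might try to bound the asymptotic volume ratio of $(\tilde M,\tilde g)$ from below and conclude that $\tilde M$ is Gromov--Hausdorff close to $\mathbb{R}^m$ by Colding's volume rigidity, but relating that ratio to the local rewinding volumes appears no easier. In either approach the crux is the perennial problem of comparing the local fundamental group with its image in $\pi_1(M)$ under only a lower Ricci bound --- where the Cheeger--Fukaya--Gromov machinery is unavailable --- and the orbifold-type degenerations that may occur (compare Remark~\ref{rmk: orbifold_fibration}) are the genuine source of the difficulty, presumably the reason the conjecture remains open. Granting the non-collapsing of $\tilde M$, however, Theorem~\ref{thm: CRX19a} closes the argument immediately.
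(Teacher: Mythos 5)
The statement you are asked about is not a theorem of the paper: it is stated there as an open conjecture, with no proof given (the paper only records that the case with an additional two-sided Ricci bound was settled in~\cite{CRX18}). Your write-up, to its credit, is honest about this: what you present is a reduction scheme (deduce unit-scale non-collapsing of the global universal cover $\tilde M$ from everywhere-almost-maximal local rewinding volume, then quote Theorem~\ref{thm: CRX19a}), and you yourself flag that the decisive step is missing. So this is not a proof, and the gap you name is exactly where the conjecture lives.

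Concretely, the unproved step is the passage from non-collapsing of the local covers $\hat U_x$ (which is essentially the hypothesis, via Bishop--Gromov and volume almost-rigidity) to non-collapsing of $\tilde M$, i.e.\ excluding short elements of $K_x=\ker\big(\pi_1(B_g(x,\rho),x)\to\pi_1(M,x)\big)$ at some point and definite scale. Your heuristic that a short loop essential in $B_g(x,\rho)$ but trivial in $M$ would force $\widetilde{\rm Vol}_g(x,\rho)$ to drop does not follow from anything quoted: the rewinding volume is computed on $\hat U_x$ \emph{before} quotienting by $K_x$, so it is blind to $K_x$; a large or short-displacement $K_x$ is a priori compatible with $\hat U_x$ being volume-almost-maximal, and the equivariant use of Theorem~\ref{thm: HR20} you invoke presupposes exactly the kind of control on the interplay between local and global fundamental groups (under a lower Ricci bound only, with possible orbifold degenerations as in Remark~\ref{rmk: orbifold_fibration}) that is not available. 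Any argument here must be genuinely global --- locally the hypothesis and the desired conclusion simply do not determine each other --- and producing such an argument is the content of the conjecture. In short: the reduction to Theorem~\ref{thm: CRX19a} is a sensible frame, but the key lemma is asserted, not proved, so the conjecture remains open as stated in the paper.
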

In the case when a uniform Ricci curvature upper bound is additionally assumed,
this conjecture has been verified by the second-named author and his
collaborators in~\cite{CRX18}.

\section[Smoothing the locally collapsing metrics with Ricci curvature bounded
below]{Smoothing the locally collapsing metrics\\ with Ricci curvature bounded
below}\label{section4}

As mentioned in the introduction, the proofs of Theorems \ref{thm: Fukaya1},
\ref{thm: HKRX18A}, \ref{thm: Kapovitch19} and \ref{thm: HW20a} all rely on the
smoothing effect by globally running the Ricci flow. The Ricci flow with
initial data $(M,g)$, first introduced by Hamilton \cite{Hamilton} on closed
$3$-manifolds to deform a given Riemannian metric $g$ with positive Ricci
curvature to a positive Einstein metric, is a smooth family of Riemannian
metrics~$g(t)$ on~$M$ solving the following initial value problem for $t\ge 0$:
\begin{align}\label{eqn: RF_defn}
\begin{cases}
\partial_tg(t) = -2\Rc_{g(t)},\\
g(0) = g.
\end{cases}
\end{align}
In harmonic coordinates, the Ricci flow becomes a non-linear
heat-type equation for the metric tensor, and by the nature of the heat flows,
notably Shi's estimates~\cite{Shi}, a key effect of running Ricci flow is that
the evolved metric has much improved regularity:
\begin{align}\label{eqn: Shi_estimate}
\forall\, l\in \mathbb{N},\ \exists\, C_l>0,\ \sup_M\big|\nabla^l
\Rm_{g(t)}\big|_{g(t)} \le C_lt^{-1-l}.
\end{align}
Here the constants $C_l$ depend on the dimension of $M$, as well as
$\|\Rm_g\|_{C^0(M,g)}$. In fact, the finiteness of
$\|\Rm_g\|_{C^0(M,g)}$ guarantees the Ricci flow solution to~(\ref{eqn: RF_defn}) to exist for a definite amount of time determined by
its value, even if~$(M,g)$ is complete but non-compact.

In view of Shi's estimates, the Ricci flow also becomes a useful tool to smooth
a given Riemannian metric by replacing the initially given metric $g$ with the
evolved metric $g(t)$, whose regularity is controlled by~(\ref{eqn:
Shi_estimate}) -- in order to take advantage of such an estimate, a uniform
lower bound of the Ricci flow existence time then becomes crucial. This method
has been investigated in \cite{DWY96} for closed mainfolds, producing fruitful
applications, such as the proofs of Theorem~\ref{thm: DWY96} and
\cite[Proposition 6.6]{NaberZhang} (see also \cite{PWY99}); but as the
collapsing phenomenon may be observed locally on a geodesic ball, the
localization of the Ricci flow existence results is usually necessary for the
smoothing purpose. In this section, we will discuss the recent developments on
the Ricci flow local smoothing techniques for collapsing initial data with
Ricci curvature bounnded below.

\subsection{Local existence of the Ricci flow}\label{section4.1}
As shown in \cite[Lemma 2.2]{HW20b}, we could in fact start the Ricci flow
locally on any Riemannian manifold with Ricci curvature bounded below.
\begin{Lemma}\label{lem: HW20b0}
Given a complete Riemannian manifold $(M^m,g)$ with $\Rc_g\ge -(m-1)g$ and let~$K$ be a compact subset. For any $R>0$ there is a smooth family of Riemannian
metrics $g(t)$ on $B_g\big(K,\frac{R}{4}\big)$ satisfying
\begin{align*}
\begin{cases}
\partial_tg(t) = -2\Rc_{g(t)}\ \text{on}\ B_g\big(K,\frac{R}{4}\big)\times (0,T],\\
g(0) = g\ \text{on}\ B_g\big(K,\frac{R}{4}\big),
\end{cases}
\end{align*}
for some $T>0$, such that
\begin{align*}
\forall\, t\in (0,T],\quad
\sup_{B_g(K,\frac{R}{4})} |\Rm_{g(t)} |_{g(t)} \le Ct^{-1},
\end{align*}
where the positive constants $C$ and $T$ depend on $g$, $K$ and $R$.
\end{Lemma}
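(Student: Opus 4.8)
The plan is to obtain the local Ricci flow by exhausting $M$ with compact domains, solving the Ricci flow on slightly larger domains with suitable boundary behavior, and extracting a limit. First I would fix a compact $K\subset M$ and $R>0$, and set $\Omega := B_g(K,R)$, so that $\overline{\Omega}$ is a compact set with $\Rc_g\ge -(m-1)g$ on a neighborhood of it. The standard device here is to use a cutoff/conformal modification or a boundary-value formulation: one way is to pick a slightly larger bounded domain $\Omega' \Supset \Omega$, complete the metric $g$ on $\Omega'$ to a complete metric $g'$ on a closed manifold $M'$ (or a complete manifold) agreeing with $g$ on $\Omega$ — this can be done since $\Omega'$ is relatively compact — and then appeal to an existing local/short-time existence theorem for the Ricci flow started from a complete metric with Ricci curvature bounded below. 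Alternatively, and more self-containedly, I would invoke Shi's short-time existence on a closed manifold after doubling or capping off $\Omega'$, noting that a closed manifold has bounded curvature so the flow exists on $[0,T]$ for some $T>0$; the point of working on a closed manifold is only to guarantee existence, and the flow on the smaller domain $B_g(K,R/4)$ is then well-defined by domain monotonicity together with a pseudolocality-type argument.

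The heart of the matter is the curvature decay estimate $|\Rm_{g(t)}| \le Ct^{-1}$ on $B_g(K,R/4)$, and here I would use a local a priori estimate for the Ricci flow with Ricci curvature bounded below — this is precisely the kind of estimate proved in the first and third authors' work on local Ricci flow smoothing, building on the integral-curvature and $\varepsilon$-regularity machinery, or on Simon–Topping / Bamler–Cabezas-Rivas–Wilking-type local estimates. Concretely, the Ricci lower bound on the initial data, together with volume non-inflation and distance-distortion estimates along the flow, lets one control the geometry on a definite parabolic neighborhood; a standard point-picking (maximum principle) argument then converts ``curvature controlled on a fixed spatial ball for a fixed time'' into the scaling-invariant decay $|\Rm_{g(t)}|\le Ct^{-1}$, where $C$ depends only on $m$, on the lower Ricci bound (which is normalized), and on how $g$ and $R$ enter through the initial local geometry. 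The constants $C$ and $T$ are thus allowed to depend on $g$, $K$, $R$, exactly as claimed.

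I would carry out the steps in this order: (i) reduce to a relatively compact domain $\Omega' \Supset B_g(K,R/2)$ and produce a background complete (e.g. closed) Riemannian manifold whose metric restricts to $g$ on $\Omega'$; (ii) invoke short-time existence to get a smooth solution $g(t)$ for $t\in[0,T_0]$ on the background manifold, hence on $\Omega'$; (iii) apply the local Ricci-curvature-lower-bound a priori estimate to get $|\Rm_{g(t)}|\le C t^{-1}$ on $B_g(K,R/4)$ for $t\in(0,T]$ with $T\le T_0$ chosen small enough that distance distortion keeps $B_g(K,R/4)$ inside the region where the estimate is valid; (iv) check that the restricted family $g(t)|_{B_g(K,R/4)}$ solves the stated initial-value problem with $g(0)=g$. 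The main obstacle is step (iii): making the local curvature estimate genuinely local, i.e.\ showing that the curvature decay on $B_g(K,R/4)$ does not depend on the (arbitrary) choice of completion or capping, and controlling the time $T$ so that no curvature concentration forms near $K$ before the estimate kicks in. This is exactly where one needs the Ricci-lower-bound local smoothing results cited in Section~\ref{section4} (e.g.\ \cite{HW20b}), and the proof in the paper presumably reduces Lemma~\ref{lem: HW20b0} to that local estimate; I would structure my argument the same way, quoting those estimates as black boxes.
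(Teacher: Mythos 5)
Your overall skeleton (extend the local data to a complete bounded-curvature manifold that agrees with $g$ on the small ball, run Shi's flow, restrict to $B_g\big(K,\frac R4\big)$) is the same as the paper's; the paper just uses a different completion device, namely Hochard's conformal transformation \cite[Section~6]{Hochard16}, which blows the points near $\partial B_g(K,R)$ off to infinity while keeping the conformal factor equal to $1$ on $B_g\big(K,\frac R4\big)$. Your capping/doubling of a relatively compact $\Omega'$ with smoothed boundary would serve the same purpose for this lemma; the paper's choice has the additional virtue (explicitly noted there, and used later for Theorem~\ref{thm: HW20b1}) that the conformal factor can be designed to preserve the scalar curvature and local isoperimetric lower bounds of $g$, which a generic capping does not obviously do.

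The genuine problem is where you place ``the heart of the matter.'' Your step~(iii) proposes to derive $|\Rm_{g(t)}|\le Ct^{-1}$ from Simon--Topping / Bamler--Cabezas-Rivas--Wilking-type local a priori estimates, pseudolocality, or the machinery of \cite{HW20b}. None of these applies under the hypotheses of Lemma~\ref{lem: HW20b0}: the lemma assumes only $\Rc_g\ge -(m-1)g$, with no non-collapsing, no Reifenberg or maximal-rank condition, and no almost-Euclidean isoperimetric bound, and indeed the whole point of the surrounding discussion is that such extra hypotheses are exactly what Theorem~\ref{thm: HW20b1} and Theorem~\ref{thm: HKRX18b} must add in order to get constants independent of $g$. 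Invoking \cite{HW20b} here would also be circular, since this lemma is the starting point of that argument. Fortunately no such estimate is needed, because $C$ and $T$ are allowed to depend on $g$, $K$ and $R$: since $\overline{B_g(K,R)}$ is compact, $|\Rm_g|$ is bounded there by some $\Lambda(g,K,R)$, the completed metric $h$ can be arranged to have globally bounded curvature, and Shi's short-time existence theorem \cite{Shi} already yields a flow on a definite interval $[0,T(\Lambda)]$ with $\sup|\Rm_{h(t)}|\le 2\Lambda'$; then $2\Lambda'\le Ct^{-1}$ for $t\le T$ with $C:=2\Lambda' T$, which is all the lemma claims. For the same reason your worry in step~(iii) about the estimate depending on the choice of completion is moot: the constants are permitted to depend on the construction, hence on $g$, $K$, $R$. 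So the proposal is repairable, but as written its key step rests on black boxes that are either inapplicable under the stated hypotheses or unnecessary once one notices the two-sided curvature bound coming from compactness.
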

This lemma is proven using Hochard's conformal transformation technique in
\cite[Section~6]{Hochard16}. One can conformally blow the points near $\partial
B_g(K,R)$ to infinity, obtaining a complete Riemannian metric~$h$ defined on
$B_g(K,R)$. Since $\overline{B_g(K,R)}$ is compact, the sectional curvature of
$g$ is bounded, and thus so is the sectional curvature of the complete metric
$h$ by making a good choice of the conformal factor. One could then rely on
Shi's short time existence theorem to start a Ricci flow solution $h(t)$ with
initial data $h(0)=h$. But since the conformal factor can be designed to be~$1$
on $B_g\big(K,\frac{R}{4}\big)$, one can view the Ricci flow solution
$h(t)|_{B_g(K,\frac{R}{4})}$ as the local Ricci flow starting from the initial
data $\big(B_g\big(K,\frac{R}{4}\big),g\big)$. We point out that the conformal factor could
also be designed so that the scalar curvature and local isoperimetric constant
lower bounds for $h$ are comparable to the ones for~$g$.

As mentioned in the introduction, in order to use Ricci flow as a smoothing
tool one needs a definite lower bound of the existence time. If the initial
data has only Ricci curvature lower bound, then the existence time lower bound
relies on certain non-collapsing condition -- even if the actual
$m$-dimensional initial data may collapse to a lower dimensional space, the
local covering spaces are usually assumed to resemble the local $m$-dimensional
Euclidean space. In this setting, our most recent result \cite[Theorem~1.2]{HW20b} gives:
 \begin{Theorem}\label{thm: HW20b1}
Given any $\alpha\in \big(0,10^{-1}\big)$, any positive $m, l\in \mathbb{N}$ and any
$R\in (0,100)$, there are uniform constants $\delta_{O}(m,l,R,\alpha),
\varepsilon_{O}(m,\alpha)\in (0,1)$ to the following effect: let $K$ be a
compact and connected subset of $(M^m,g)$, an $m$-dimensional Riemannian
manifold with $\Rc_g\ge -(m-1)g$, suppose for some $k\le m$ and $\delta\le
\delta_O$ it satisfies for any $p\in B_g(K,R)$ the following assumptions:
\begin{enumerate}\itemsep=0pt
 \item[$1)$] there are a finite group $G_p<{\rm O}(k)$ with $ |G_p |\le l$ and a
 $\phi_p \in \operatorname{Hom} (\pi_1(B_g(K,R),p),G_p )$ which is
 surjective,
 \item[$2)$] $d_{\rm GH}\big( B_g\big(p,4^{-1}R\big), \mathbb{B}^k\big(4^{-1}R\big)\slash
 G_p\big)<\delta$, and
 \item[$3)$] $\rank \tilde{\Gamma}_{\delta}(p)=m-k$,
 \end{enumerate}
then there is a Ricci flow solution with initial data $\big(B_g\big(K,\frac{R}{4}\big),g\big)$,
existing for a period no shorter than~$\varepsilon_{O}^2$, and with curvature
control
\begin{align*}
\forall\, t\in \big(0,\varepsilon_{O}^2\big],\quad
\sup_{B_g(K,\frac{R}{4})} |\Rm_{g(t)} |_{g(t)} \le \alpha t^{-1} +\varepsilon_{O}^{-2}.
\end{align*}
\end{Theorem}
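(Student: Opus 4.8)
The plan for proving Theorem~\ref{thm: HW20b1} is to combine the local Ricci flow existence from Lemma~\ref{lem: HW20b0} with the strong $\varepsilon$-regularity of Theorem~\ref{thm: NZ18} lifted to the local covering, and then to run a point-picking / blow-up contradiction argument to promote the qualitative short-time existence to a \emph{definite} lower bound on the existence time with the sharp curvature bound $\alpha t^{-1}+\varepsilon_O^{-2}$. First I would fix the scale: by Lemma~\ref{lem: HW20b0}, a Ricci flow $g(t)$ with initial data $\bigl(B_g\bigl(K,\tfrac R4\bigr),g\bigr)$ exists on some maximal interval $(0,T^{*})$ with a (non-uniform) bound $|\Rm_{g(t)}|\le Ct^{-1}$; the whole point is to show $T^{*}\ge\varepsilon_O^2$ with $C$ replaced by $\alpha$ modulo the additive $\varepsilon_O^{-2}$. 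The natural device is to pass to the $G_p$-cover: assumption~(1) gives, for each $p\in B_g(K,R)$, a surjection $\phi_p\colon\pi_1(B_g(K,R),p)\to G_p\subset{\rm O}(k)$, which determines a normal covering $\pi\colon(W,\hat p)\to(B_g(K,R),p)$ with deck group $G_p$; pulling back $g$ gives $\hat g$ with $\Rc_{\hat g}\ge-(m-1)\hat g$. Assumptions~(2) and~(3) are exactly engineered so that, after lifting, $d_{\rm GH}\bigl(B_{\hat g}(\hat p,r_0),\mathbb{B}^k(r_0)\bigr)$ is small for a definite $r_0$ (the $G_p$-quotient is undone by the cover) and $\rank\widehat G_\delta(\hat p)=m-k$, so Theorem~\ref{thm: NZ18} applies on $W$ and yields, at some scale $r\in(\delta,1)$, that $B_{\hat g}(\hat p,r)$ is $\varepsilon r$-close to $\mathbb{B}^m(r)$ --- i.e.\ the covering geometry is \emph{almost Euclidean} at a definite scale around every point.

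Second, with almost-Euclidean covering geometry in hand at a uniform scale, I would invoke a local Ricci flow pseudolocality / existence-time estimate of the type already in the literature for manifolds that are Reifenberg-close to Euclidean balls: on the cover $W$ one gets a Ricci flow $\hat g(t)$ existing for a uniform time $\sim\varepsilon_O^2$ with $|\Rm_{\hat g(t)}|\le\alpha t^{-1}$ on a definite ball (this is where the smallness of $\varepsilon_O(m,\alpha)$ relative to $\alpha$ is used --- the closer to Euclidean, the smaller the curvature constant one can extract, via a blow-up argument using the $\varepsilon$-regularity on the limit, which must be flat $\mathbb{R}^m$). Because the Ricci flow is canonical and commutes with local isometries, $\hat g(t)$ is $G_p$-invariant, hence descends to a Ricci flow on a neighborhood of $p$ downstairs agreeing with $g(t)$ (by uniqueness of the flow from Lemma~\ref{lem: HW20b0}'s construction, which is Shi's flow on a completion); thus $|\Rm_{g(t)}|\le\alpha t^{-1}$ near every $p$ for $t\le\varepsilon_O^2$. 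Covering $B_g(K,R)$ by such neighborhoods and using connectedness of $K$ patches these into the global bound $\sup_{B_g(K,R/4)}|\Rm_{g(t)}|\le\alpha t^{-1}+\varepsilon_O^{-2}$; the additive $\varepsilon_O^{-2}$ absorbs the transition near the boundary $\partial B_g(K,R)$ where Hochard's conformal factor is no longer $1$ and only the crude bound of Lemma~\ref{lem: HW20b0} holds for times bounded away from $0$. Finally, a continuity/bootstrap argument closes the loop: the set of times for which the improved bound holds is open, closed and nonempty in $(0,\varepsilon_O^2]$, so the flow cannot stop before $\varepsilon_O^2$.

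The main obstacle --- and the technical heart of the argument --- is establishing the uniform, \emph{dimension-sharp} curvature bound $\alpha t^{-1}$ (rather than just \emph{some} $Ct^{-1}$) on the almost-Euclidean covers, because $\alpha$ can be taken arbitrarily small. This forces a blow-up/compactness scheme: if the conclusion failed, one would have a sequence of pointed covers with GH-distance to $\mathbb{B}^m$ tending to $0$ but Ricci flows developing curvature $>\alpha t^{-1}$ at controlled times; rescaling and extracting a limit flow (using Hamilton--Cheeger--Gromov compactness, which requires a priori local curvature and injectivity-radius control --- itself obtained from Lemma~\ref{lem: HW20b0} and a non-inflating/non-collapsing estimate on the flow) produces a Ricci flow emanating from flat $\mathbb{R}^m$, which by uniqueness must be the static flat metric, contradicting the curvature lower bound. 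Making this rigorous requires care that the GH-closeness at the initial time is actually upgraded to $C^{0}$- or harmonic-coordinate closeness so that the limit initial data is genuinely $\mathbb{R}^m$ (this is where the Reifenberg-type regularity from Theorem~\ref{thm: NZ18} and the methods of~\cite{CJN18} enter), and that the $G_p$-equivariance survives the limit so the descended flow downstairs is well-defined. A secondary subtlety is the bookkeeping near $\partial B_g(K,R)$: one must ensure the neighborhoods on which the sharp bound holds genuinely cover $B_g(K,R/4)$, so that the crude term $\varepsilon_O^{-2}$ is only needed as a harmless additive buffer and does not pollute the $t\to 0$ asymptotics.
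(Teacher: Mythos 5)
Your skeleton (use condition (1) to pass to a finite cover that undoes the orbifold group $G_p$, combine with conditions (2)--(3) and Theorem~\ref{thm: NZ18} to see that the covering geometry is almost Euclidean at a definite scale, start the flow via Lemma~\ref{lem: HW20b0}, and conclude by a pseudo-locality argument) is the right one and matches the paper. But two steps, as you have written them, contain genuine gaps. First, the lift/descent direction: you construct a Ricci flow $\hat g(t)$ on the cover $W$ and descend it, claiming it agrees with the flow of Lemma~\ref{lem: HW20b0} ``by uniqueness''. Uniqueness of Ricci flow fails for incomplete/local initial data, and Shi solutions of two different conformal completions (one built over $W$, one built over $B_g(K,R)$) need not be related at all; so the descended flows neither patch nor coincide with the downstairs flow, and your open--closed continuity step has nothing to bootstrap. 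The paper argues in the opposite direction: run the single flow $h(t)$ downstairs (Hochard's conformal exhaustion, with factor $\equiv 1$ on $B_g\big(K,\frac R4\big)$, making $h$ complete), lift \emph{that} flow to the covering space -- the lift is still a complete flow -- verify the pseudo-locality hypotheses for the lifted initial data, and transfer the curvature bound back down, which is automatic because curvature is a local invariant. Then if the existence time were shorter than $\varepsilon_P^2$ the curvature would blow up at some point of $B_g\big(K,\frac R4\big)$, contradicting the bound $\alpha t^{-1}+\varepsilon_P^{-2}$; no patching is needed.

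Second, the bridge from Gromov--Hausdorff closeness to a pseudo-locality hypothesis. Theorem~\ref{thm: NZ18} only gives that covering balls are GH-close to $\mathbb{B}^m(r)$; Perelman's theorem needs either an almost-Euclidean isoperimetric constant together with $\Sc\ge -1$, or almost-Euclidean volume together with $\Rc\ge-\delta_P$, and the latter is unavailable here (you only have $\Rc\ge-(m-1)$). The paper's key intermediate step is to convert the GH-closeness of the universal-covering balls into almost-Euclidean isoperimetric constants at a fixed scale (using the Ricci lower bound, as in the role of \cite{CM17} in the sketch of Theorem~\ref{thm: HKRX18b}), and to note that Hochard's conformal factor is designed so that the scalar-curvature and local isoperimetric lower bounds are nearly preserved; Perelman's condition (1) then yields $\alpha t^{-1}+\varepsilon_P^{-2}$ directly, for any prescribed $\alpha$, with $\delta_P(m,\alpha)$ absorbing the sharpness -- no blow-up is needed. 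Your substitute for this step, a Hamilton--Cheeger--Gromov blow-up producing a limit flow emanating from flat $\mathbb{R}^m$, does not work as described: at $t=0$ you have only a Ricci lower bound and GH-closeness, so you cannot upgrade to $C^0$ or harmonic-coordinate closeness, and attaching the initial slice to a smooth limit flow requires exactly the distance-distortion/pseudo-locality control you are trying to prove, making the scheme circular. A minor point in the same vein: the additive $\varepsilon_O^{-2}$ is inherited from the pseudo-locality inequality~(\ref{eqn: pseudolocality_Rm}) itself, not a buffer for boundary effects of the conformal factor -- the conclusion is stated on $B_g\big(K,\frac R4\big)$, where the factor is identically $1$.
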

Here the notation
$\tilde{\Gamma}_{\delta}(p):=\operatorname{Image} [\pi_1(B_g(p,\delta),p)\to
\pi_1(B_g(K,R),p) ]$ is the pseudo-local fundamental
group for $B_g(K,R)$, containing all geodesic loops in $B_g(p,\delta)$ with base
point~$p$, and are allowed to be deformed within the entire $B_g(K,R)$.

Theorem~\ref{thm: HW20b1} is proven roughly as following: by
conditions~(1) and~(2), for each $p\in B_g\big(K,\frac{3}{4}R\big)$ we can find a finite
normal covering of~$B_g\big(p,\frac{R}{4}\big)$, so that it is
$\Psi(\delta)$-Gromov--Hausdorff close to~$\mathbb{B}^k\big(\frac{R}{4}\big)$; this
condition, together with the nilpotency rank assumption in condition~(3),
enable us to show that the isoperimetric constant in a fix-sized geodesic ball
around any point of the universal covering space of $B_g\big(K,\frac{3}{4}R\big)$ is
very close to the $m$-Euclidean isoperimetric constant. By the design of the
conformal factor, such almost locally Eucliean property is almost
preserved under the conformal transformation, and together with the (relaxed)
scalar curvature lower bound of the conformally transformed metric, it enables
us to apply Perelman's pseudo-locality theorem (see the next sub-section) to bound
the Ricci flow existence time from below.

Theorem~\ref{thm: HW20b1} characterizes the ``almost locally Euclidean
covering space'' assumption via algebraic conditions, i.e., the maximality of
the rank of the pseudo-local fundamental groups and the existence of a
surjective homomorphism of the local fundamental group onto the orbifold
groups. One could also directly assume that the local universal covering space
resembles the $m$-Euclidean space up to a fixed scale, defining the so-called
$(\delta,\rho)$-Reifenberg points. For any $p\in M^m$, we say it is a
$(\delta,\rho)$\emph{-Reifenberg point}, if for any lift $\tilde{p}$ of $p$ in
the Riemannian universal covering space of $B_g(p,\rho)$,
\begin{align*}
\forall\, r\in (0,\rho],\quad r^{-1}d_{\rm GH} \big(B_{\tilde{g}}(\tilde{p},r),
\mathbb{B}^m(r) \big) \le \delta.
\end{align*}
This definition essentially appears in the work~\cite{HKRX18} of the
second-named author and his collaborators, and is for the purpose of defining
the concept of Ricci bounded local covering geometry. Notice that with bounded
Ricci curvature, if~$p$ is a~$(\delta,2\rho)$-Reifenberg point for~$\delta$
sufficiently small, then $B_{\tilde{g}}(\tilde{p},\rho)$ has a uniform lower
bound on the $C^{1,\frac{1}{2}}$ harmonic radius. On the other hand, one could
always run the Ricci flow locally around a $(\delta,\rho)$-Reifenberg point for
a definite amount of time.
\begin{Theorem}\label{thm: HKRX18b}
For any $\alpha,\rho \in (0,1)$ there are uniform $\delta(m,\alpha,\rho) \in
(0,1)$ and $\varepsilon(m,\alpha,\rho)\in (0,1)$ such that if $(M,g)$ is a
complete Riemannian manifold with $\Rc_g\ge-(m-1)g$, and $p\in M$ is a
$(\delta,2\rho)$-Reifenberg point, then there is a Ricci flow solution with
initial data $(B_g(p,\rho),g)$, that exists up to time $\varepsilon^2$
and for any $t\in \big[0,\varepsilon^2\big]$, the curvature satisfies
\begin{align*}
\sup_{B_g(p,\rho)} |\Rm_{g(t)} | \le \alpha t^{-1}.
\end{align*}
\end{Theorem}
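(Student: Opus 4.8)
The plan is to deduce Theorem~\ref{thm: HKRX18b} from the more general Theorem~\ref{thm: HW20b1} by taking the compact set $K$ to be the single point $\{p\}$ and showing that the $(\delta,2\rho)$-Reifenberg hypothesis implies assumptions (1)--(3) of Theorem~\ref{thm: HW20b1} at every point of a slightly larger ball, with the appropriate matching of scales. First I would fix the output parameters: given the target $\alpha$ and $\rho$, set $R=2\rho$ (rescaled into the admissible range $(0,100)$ by a harmless dimensional normalization, or by working at unit scale and rescaling back) and $k=m$, $l=1$, so that $G_p$ is trivial and the orbifold group $G_p<\mathrm{O}(k)$ plays no role. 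Then assumption (1) is vacuous (the trivial homomorphism is surjective onto the trivial group), and assumption (3) reads $\rank\tilde\Gamma_\delta(p)=m-k=0$, i.e.\ the pseudo-local fundamental group has rank zero. Assumption (2) reads $d_{\rm GH}(B_g(p,R/4),\mathbb{B}^m(R/4))<\delta$.

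The key step is therefore to verify, for $\delta$ small depending on $m,\alpha,\rho$, that a $(\delta',2\rho)$-Reifenberg point satisfies these conditions for all nearby base points, where $\delta'$ is chosen even smaller than the $\delta_O$ furnished by Theorem~\ref{thm: HW20b1}. The Reifenberg condition says the \emph{local universal cover} $B_{\tilde g}(\tilde p,r)$ is $\delta' r$-close to $\mathbb{B}^m(r)$ for all $r\le 2\rho$; since a quotient of an almost-Euclidean ball is a further Gromov--Hausdorff perturbation toward $\mathbb{B}^m$ only if the deck group is small, I must argue the deck group is in fact essentially trivial at the relevant scale. This is exactly where the rank-zero conclusion comes from: if $B_{\tilde g}(\tilde p,r)$ is $\delta' r$-close to a Euclidean $r$-ball for the \emph{full} range down to scale $\delta'$, then by a standard argument (Colding--Naber type, as invoked in the discussion preceding Theorem~\ref{thm: NZ18} and used in \cite{HKRX18}) any short loop based at $p$ lifting to a path in $B_{\tilde g}$ that returns close to $\tilde p$ must actually be contractible in $B_g(p,2\rho)$ once $\delta'$ is small, forcing $\Gamma_\delta(p)$, hence $\tilde\Gamma_\delta(p)$, to be finite of rank zero; simultaneously, triviality of the local deck group up to the relevant scale gives $B_g(p,\rho)\cong B_{\tilde g}(\tilde p,\rho)$ up to a $\Psi(\delta')$-error, so $B_g(p,R/4)=B_g(p,\rho/2)$ is $\delta$-close to $\mathbb{B}^m(\rho/2)$, yielding assumption (2). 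I also need this at every $p\in B_g(K,R)=B_g(p_0,2\rho)$, not just the center; but being a $(\delta',2\rho)$-Reifenberg point is, after shrinking $\rho$ by a fixed factor, an open condition that propagates to nearby points with a controlled loss in $\delta'$, since the local cover of a nearby ball is itself almost Euclidean by the triangle inequality in Gromov--Hausdorff distance and the monotone behavior of covers under inclusion. (Alternatively one simply applies Theorem~\ref{thm: HW20b1} with $K=\overline{B_g(p_0,\rho)}$, a compact connected set, and $R=2\rho$, needing the Reifenberg condition only on $B_g(K,R)=B_g(p_0,3\rho)$, so one starts from a $(\delta',3\rho)$-Reifenberg hypothesis and absorbs the constant into the statement.)

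Once assumptions (1)--(3) of Theorem~\ref{thm: HW20b1} are in force with $\delta\le\delta_O(m,1,2\rho,\alpha)$, that theorem produces a Ricci flow on $B_g(K,R/4)=B_g(p_0,\rho/2)$ existing for time at least $\varepsilon_O^2$ with $|\Rm_{g(t)}|\le\alpha t^{-1}+\varepsilon_O^{-2}$. To reach the cleaner conclusion $\sup|\Rm_{g(t)}|\le\alpha t^{-1}$ on $B_g(p,\rho)$ without the additive constant, I would run the same argument at a smaller scale: replace $\rho$ by $c\rho$ for a dimensional $c<1$ so that the flow exists on a ball of radius $\rho$ for a time $\varepsilon^2$ so small that on $(0,\varepsilon^2]$ the additive term $\varepsilon_O^{-2}$ is dominated by $\tfrac{\alpha}{2}t^{-1}$ (legitimate since $t^{-1}\ge\varepsilon^{-2}$), and run Theorem~\ref{thm: HW20b1} with $\alpha$ replaced by $\alpha/2$; combining the two bounds gives $|\Rm_{g(t)}|\le\tfrac{\alpha}{2}t^{-1}+\tfrac{\alpha}{2}t^{-1}=\alpha t^{-1}$. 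A final rescaling back to the original $\rho$ adjusts $\varepsilon$ and $\delta$ by fixed functions of $m,\alpha,\rho$, which is exactly the form of the claimed constants.

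The main obstacle I expect is the propagation of the Reifenberg condition from the center $p$ to all $p'\in B_g(K,R)$ together with the bookkeeping of how small $\delta$ must be: one needs that \emph{local} universal covers behave well under changing the base point, which is not entirely formal because passing to the universal cover of a sub-ball versus a super-ball changes the deck group; the clean way around this is to phrase the hypothesis as a Reifenberg condition on the enlarged ball $B_g(p_0,3\rho)$ (absorbing the factor $3$ into the constants), invoke monotonicity of Gromov--Hausdorff approximations under ball inclusion to get the $(\Psi(\delta),R)$-Reifenberg property at every $p'$ in the relevant region, and only then feed everything into Theorem~\ref{thm: HW20b1}. With that packaging the rest is routine, and no new analytic input beyond Theorem~\ref{thm: HW20b1} and the standard almost-Euclidean-cover estimates of \cite{HKRX18, CN12} is required.
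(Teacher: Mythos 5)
There is a genuine gap, and it is in the central step of your reduction. You claim that if $p$ is a $(\delta',2\rho)$-Reifenberg point then the deck group of the local universal cover is ``essentially trivial,'' so that $\rank\tilde{\Gamma}_{\delta}(p)=0$ and $B_g(p,R/4)$ is itself $\delta$-close to $\mathbb{B}^m(R/4)$, which would put you in the case $k=m$, $l=1$ of Theorem~\ref{thm: HW20b1}. This is false: the Reifenberg condition constrains only the \emph{universal cover} of $B_g(p,2\rho)$, and says nothing about loops or collapsing in the base. A flat torus of arbitrarily small diameter is a $(0,2\rho)$-Reifenberg point at every point (its local universal cover is literally $\mathbb{R}^m$, so $B_{\tilde g}(\tilde p,r)=\mathbb{B}^m(r)$ for all $r$), yet downstairs $B_g(p,R/4)$ is Gromov--Hausdorff close to a point and $\tilde\Gamma_{\delta}(p)\cong\mathbb{Z}^m$ has rank $m$, not $0$. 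Allowing the manifold downstairs to be arbitrarily collapsed while the cover is non-collapsed is precisely the point of Theorem~\ref{thm: HKRX18b}; your argument would only prove the (much weaker) non-collapsed case. Nor does the Reifenberg hypothesis supply the structured data needed for general $k$ in Theorem~\ref{thm: HW20b1} (a finite group $G_p<{\rm O}(k)$ with a surjective $\phi_p$, closeness of the base ball to $\mathbb{B}^k(R/4)/G_p$, and $\rank\tilde\Gamma_\delta(p)=m-k$): a Reifenberg point comes with no a priori description of what the base ball collapses to, so the reduction does not go through in that form either.

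The paper's proof works directly on the cover rather than trying to trivialize it: one starts a local Ricci flow on $B_g(p,2\rho)$ via Lemma~\ref{lem: HW20b0} (Hochard's conformal exhaustion, arranged so the metric is unchanged on $B_g(p,\rho)$), then uses the $(\delta,2\rho)$-Reifenberg property together with the isoperimetric estimates of \cite{CM17} to show that the \emph{local universal covering space} has almost Euclidean isoperimetric constant at a definite scale, and finally applies Perelman's pseudo-locality theorem to the lifted (covering) flow; since the covering projection is a local isometry for the flow, the bound $|\Rm_{g(t)}|\le\alpha t^{-1}$ descends to $B_g(p,\rho)$. The ingredient your proposal is missing is exactly this passage to the covering flow: pseudo-locality must be applied upstairs, where the hypotheses hold, not downstairs, where they generally fail. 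Your trick for absorbing the additive term $\varepsilon_O^{-2}$ by shrinking the time interval is fine as bookkeeping, but it does not touch the main difficulty.
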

\begin{proof}[Sketch of proof]
We could always start the Ricci flow $h(t)$ by Lemma~\ref{lem: HW20b0} on
$B_g(p,2\rho)$. Moreover, we could make sure that the initial data satisfies
$h(0)|_{B_g(p,\rho)}\equiv g|_{B_g(p,\rho)}$. We only need to bound the
existence time of the Ricci flow from below, which in turn relies on showing
that the isoperimetric constant at any point of the covering space is almost
Euclidean on a fixed scale. This is proven in \cite{CM17}, thanks to the
definition of the $(\delta,\rho)$-Reifenberg property, as long as $\delta$ is
sufficiently small. One then relies on Perelman's pseudo-locality theorem to
prove that the flow exists for a definite period of time.
\end{proof}

Let us also mention that in the case of Kapovitch's mixed curvature condition,
it can be shown that the assumptions of Theorem~\ref{thm: Kapovitch19}
guarantees the local universal covering space at a point of the manifold to be
 almost locally Euclidean, via the aspherical theorem~\cite[Theorem~5.3]{Kapovitch19}.

\subsection{The pseudo-locality theorem}\label{section4.2}
In all the results discussed above, once the almost locally Euclidean condition
for the local covering space is verified, the lower bound of the existence time
of the Ricci flow is guaranteed by Perelman's pseudo-locality theorem, stated
in its various forms as following:
\begin{Theorem}[Perelman's pseudo-locality theorem]
For any $\alpha\in (0,1)$, there are uniform positive constants
$\varepsilon_P=\varepsilon_P(m,\alpha)$ and $\delta_P=\delta_P(m,\alpha)$
such that if $(M,g)$ is a Ricci flow solution define for $t\in [0,T]$ with each
time slice $(M,g(t))$ being a complete Riemannian manifold, and if one of the
conditions holds for $p\in M$:
\begin{enumerate}\itemsep=0pt
 \item[$1)$] $\Sc_{g(0)}\ge -1$ on $B_{g(0)}(p,1)$ and $I_{B_{g(0)}(p,1)}\ge
 (1-\delta_P)I_m$, or
 \item[$2)$] $\Rc_{g(0)}\ge -\delta_P g(0)$ on $B_{g(0)}(p,1)$ and
 $|B_{g(0)}(p,1)|_{g(0)}\ge (1-\delta_P)\omega_m$,
\end{enumerate}
where $I_m$ and $\omega_m$ stands for the isoperimetric constant and volume of
the $m$-Euclidean unit ball, respectively, and $I_{\Omega}$ denotes the
isoperimetric constant for the domain $\Omega\subset M$, then
\begin{align}\label{eqn: pseudolocality_Rm}
\forall\, t\in \big(0,\varepsilon_P^2\big],\quad
\sup_{B_{g(t)}(p,\varepsilon_P)} |\Rm_{g(t)} |_{g(t)} \le \alpha
t^{-1} +\varepsilon_P^{-2}.
\end{align}
\end{Theorem}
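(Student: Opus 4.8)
The plan is to argue by contradiction, following Perelman's original reduced-volume scheme. Suppose the conclusion fails for some fixed $\alpha\in(0,1)$; then for each $k\in\mathbb{N}$ there is a Ricci flow $(M_k^m,g_k(t))_{t\in[0,T_k]}$ with complete time slices and a point $p_k$ at which hypothesis~(1) holds with $\delta_P=k^{-1}$ -- the volume version~(2) reduces to~(1) because $\Rc_{g_k(0)}\ge -k^{-1}g_k(0)$ together with $|B_{g_k(0)}(p_k,1)|\ge(1-k^{-1})\omega_m$ forces, via Bishop--Gromov and the almost-rigidity of the Euclidean isoperimetric inequality, that $I_{B_{g_k(0)}(p_k,1)}\ge(1-\Psi(k^{-1}|m))I_m$ -- and yet some point $(y_k,\bar{t}_k)$ with $\bar{t}_k\in(0,k^{-2}]$ and $y_k\in B_{g_k(\bar{t}_k)}(p_k,k^{-1})$ violates the bound, $|\Rm_{g_k(\bar{t}_k)}|(y_k)>\alpha\bar{t}_k^{-1}+k^2$.

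Next I would run an inductive point-selection argument inside the shrinking parabolic region over $B_{g_k(0)}(p_k,1)$ to replace $(y_k,\bar{t}_k)$ by a ``center'' $(x_k,t_k)$ with $t_k\le k^{-2}$, $Q_k:=|\Rm_{g_k(t_k)}|(x_k)>\alpha t_k^{-1}$ (so $Q_k\ge k^2\to\infty$ and $t_kQ_k>\alpha$), such that $|\Rm|\le 4Q_k$ on a parabolic neighbourhood of $(x_k,t_k)$ of parabolic radius $\sim A_kQ_k^{-1/2}$ with $A_k\to\infty$. Rescaling $\tilde g_k(s):=Q_k\,g_k(t_k+Q_k^{-1}s)$ gives $|\Rm_{\tilde g_k(0)}|(x_k)=1$, uniform curvature bounds on parabolic balls about $(x_k,0)$ of radius $A_k\to\infty$, and the original time $0$ now sitting at a backward parameter $\tau_k\ge\alpha$.

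The heart of the matter is Perelman's reduced volume $\tilde V_k(\tau)$ of $\tilde g_k$ based at $(x_k,0)$. Always $\lim_{\tau\to0^+}\tilde V_k(\tau)=1$ and $\tau\mapsto\tilde V_k(\tau)$ is non-increasing, so $\tilde V_k(\tau)\le1$ for $\tau>0$. On the other hand, the $\mathcal L$-geodesics from $(x_k,0)$ running back to the original time $0$ reach the almost-Euclidean initial slice of $\tilde g_k$, and a direct estimate of the reduced-length integrand there -- this is exactly where assumption~(1) enters: an almost-Euclidean isoperimetric constant forces almost-Euclidean volume growth, hence $\tilde V_k(\tau_k)$ close to its Euclidean value -- yields $\tilde V_k(\tau_k)\ge1-\Psi(k^{-1}|m)$. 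Monotonicity then squeezes, for every fixed $\tau_0\in(0,\alpha)$, $\tilde V_k(\tau_0)\in[1-\Psi(k^{-1}|m),1]$. This near-maximality of the reduced volume also yields a uniform $\kappa$-noncollapsing of $\tilde g_k$ at $(x_k,0)$, hence a uniform injectivity-radius lower bound there (recall $|\Rm_{\tilde g_k(0)}|\le1$ nearby), and Hamilton--Cheeger--Gromov compactness extracts a pointed limit Ricci flow $(M_\infty,g_\infty(s),x_\infty)$ -- complete, of bounded curvature, and nonflat since $|\Rm_{g_\infty(0)}|(x_\infty)=1$. Passing the squeeze to the limit gives $\tilde V_\infty(\tau_0)=1$ for all $\tau_0\in(0,\alpha)$; the rigidity case of the reduced-volume monotonicity (equality at two parameters forces a gradient shrinking soliton, and reduced volume $1$ forces the Gaussian soliton on flat $\mathbb{R}^m$) shows $M_\infty=\mathbb{R}^m$, contradicting $|\Rm_{g_\infty(0)}|(x_\infty)=1$. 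This proves the theorem; the conclusion on $B_{g(t)}(p,\varepsilon_P)$ comes out because the whole argument was localized over $B_{g(0)}(p,1)$ and $g(t)$-distances differ from $g(0)$-distances by a controlled amount for $t\le\varepsilon_P^2$.

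I expect the genuinely delicate point to be the \emph{localization}. The hypotheses control only one unit ball $B_{g(0)}(p,1)$ together with a scalar-curvature lower bound, so Perelman's global monotonicity cannot be invoked naively: one must arrange, via the point-selection, that every quantity entering the contradiction lives at scales $\ll1$ -- so that in the blow-up limit the unit ball has swelled to all of $M_\infty$ and the monotonicity is genuinely global -- and one must check that the backward $\mathcal L$-geodesics reaching the initial slice stay within the region of curvature control. Equivalently one may replace $\tilde V$ by a cutoff-localized $\mathcal W$-entropy $\mu_{\mathrm{loc}}$ supported in a slightly shrunk ball and verify that the error terms generated by $|\nabla\chi|$ and $|\Delta\chi|$ for the cutoff $\chi$ -- of size $O(1)$, hence negligible against the blowing-up curvature scale and further tamed by the scalar-curvature lower bound -- do not destroy the monotonicity in the limit. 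The other two spots needing real work are the quantitative ``almost-Euclidean isoperimetric constant $\Rightarrow$ reduced volume near its Euclidean value'' estimate on the initial slice, and the soliton rigidity identifying the extremal limit as $\mathbb{R}^m$; in the entropy route the final step instead invokes the rigidity case of the Euclidean logarithmic Sobolev inequality, whose extremizers are precisely Gaussians.
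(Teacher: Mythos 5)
The paper itself does not prove this theorem: it is stated as a survey result, attributed to Perelman (Theorem~10.1 of his entropy paper) for condition~(1) on closed manifolds, to Chau--Tam--Yu for the complete non-compact case, and to Tian--Wang and Huang--Wang for condition~(2); all of these follow Perelman's original point-selection plus localized $\mathcal{W}$-entropy argument. Your proposal reproduces the correct skeleton (contradiction, point selection, rescaling), and your reduction of condition~(2) to condition~(1) via almost-rigidity of the isoperimetric inequality under $\Rc\ge-\delta$ and almost-maximal volume is reasonable in spirit, but the engine you put at the center --- reduced-volume monotonicity and its rigidity --- has a genuine gap. The point selection (Perelman's Claims~1 and~2) only yields the curvature bound $\le 4Q_k$ on a region whose \emph{backward time extent is of order} $\tfrac{1}{2}\alpha Q_k^{-1}$, i.e., of order $\alpha$ after rescaling; it does not control the spacetime region between the original time $0$ and $t_k-\tfrac{1}{2}\alpha Q_k^{-1}$, which may contain points of arbitrarily large curvature (indeed the selected point is chosen among curvature-bound violators). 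Consequently your claim of ``uniform curvature bounds on parabolic balls about $(x_k,0)$ of radius $A_k\to\infty$'' is false in the time direction, and the key estimate $\tilde V_k(\tau_k)\ge 1-\Psi\big(k^{-1}|m\big)$ is unjustified: to bound the reduced volume from below at the initial slice you must construct $\mathcal{L}$-geodesics from $(x_k,0)$ back to time $0$ with controlled $\mathcal{L}$-length and controlled $\mathcal{L}$-Jacobian, and both require curvature control precisely on the uncontrolled intermediate region. The almost-Euclidean isoperimetric constant at time $0$ by itself gives no handle on the reduced length there. The same issue undercuts the passage to $\tilde V_\infty(\tau_0)=1$ for $\tau_0$ up to $\alpha$, since the limit flow is only obtained on a backward interval of length about $\tfrac{1}{2}\alpha$ and the reduced volumes of the approximating flows are not known to converge on the scales you need.

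This is exactly the difficulty that Perelman's actual argument is designed to avoid: instead of reduced volume he couples the conjugate heat kernel based at the selected point with Hamilton's Harnack-type quantity $v\le 0$ and a cutoff function $h$ supported where the point selection \emph{does} give curvature (hence distance-distortion) control, so that the monotonicity of $\int vh\,{\rm d}V$ is a pointwise maximum-principle computation needing no geometric control of the intervening spacetime; the contradiction at time $0$ is then with the almost-Euclidean \emph{logarithmic Sobolev} inequality, which is what the isoperimetric hypothesis actually feeds into. You mention this entropy route in your last paragraph as an alternative, and that is the route you should take as the main argument (it is also the one followed by every reference the paper cites); the cutoff error terms are controlled by the local curvature bound from the point selection, not merely by the scalar curvature lower bound as you suggest. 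As written, the reduced-volume version cannot be completed without supplying curvature control back to the initial time, which the hypotheses do not provide.
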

The theorem originates from Perelman's work for closed manifolds satisfying
condition~(1) above; see \cite[Theorem 10.1]{Perelman}. Later a version for
complete non-compact manifolds was obtained by Chau, Tam and Yu; see
\cite[Theorem~8.1]{CTY11}. The theorem with condition~(2) was proven by Tian
and the third-named author for closed manifolds in \cite[Proposition~3.1]{TW15}, and its counterpart for complete non-compact data appears in
the recent work of the authors' in \cite[Proposition~6.1]{HW20a}. We point out
that all the later works essentially follow Perelman's original idea and
arguments.

In the proofs of Theorems~\ref{thm: HW20b1},~\ref{thm: HKRX18b} and
\cite[Theorem~7.2]{Kapovitch19}, the almost locally Euclidean property for the
local covering spaces checked before allows us to apply the pseudo-locality
theorem to the covering flow and obtain a uniform lower bound on the existence
time of the Ricci flow started via Lemma~\ref{lem: HW20b0}: if the existence
time~$T$ of the Ricci flow were shorter than $\varepsilon_P^2$, then for some
sequence $t_i\nearrow T$ we could observe points $x_i\in M$ such that
$\lim\limits_{t_i\to T} |\Rm_{h(t_i)} |_{h(t_i)}(x_i)= \infty$; especially,
we will get $ |\Rm_{h(t_i)} |_{h(t_i)}(x_i)>2\alpha
T^{-1}+\varepsilon_P^{-2}$ for all $i$ large enough, contradicting the
conclusion (\ref{eqn: pseudolocality_Rm}) since $T>0$ is fixed.

Heuristically speaking, Perelman's pseudo-locality theorem tells that the Ricci
flow locally ``preserves'' the almost Euclidean parts of the manifolds. And it
is natural to wonder if the initial data locally approaches lower dimensional
Euclidean spaces, whether a pseudo-locality type theorem still holds. In fact,
after proving a version of the pseudo-locality theorem \cite[Theorem~10.3]{Perelman}, Perelman asked:

\emph{``A natural question is whether the assumption on the volume of the ball
is superfluous.''}

We notice however, that there are examples (see, e.g., \cite{Lu10, HKRX18}) that
show the direct removal of the initial local volume non-collapsing assumption
is fatal:
\begin{Example}[Topping]
Let $M_{\delta}$ denote the smooth manifold obtained from capping off the
$\delta$-thin cylinder $\delta \mathbb{S}^1\times [-1,1]$ ($\mathbb{S}^1$ is
identified with the unit circle in $\mathbb{C}$ with base point $1\in
\mathbb{C}$) by two discs of radius approximately~$\frac{\pi}{2}\delta$ and
slightly smoothing near the ends of the cylinder. The natural metric~$g_{\delta}$ can be easily made to have non-negative sectional curvature. It is
also obvious that around the base point $p_{\delta}=(\delta,0)$ of $M_{\delta}$,
the geodesic ball $B_{g_{\delta}}\big(p_{\delta},\frac{1}{2}\big)$ is flat and is
$\delta$-Gromov--Hausdorff close to $\big({-}\frac{1}{2},\frac{1}{2}\big)$.
 However, the Ricci flow starting from~$M_{\delta}$ exits only for a period
 determined by the area of $M_{\delta}$, which is proportional to~$\delta$.
 Therefore, as $\delta\searrow 0$, a~curvature bound of the form~(\ref{eqn:
 pseudolocality_Rm}) cannot be obtained for any uniform $\varepsilon>0$.
\end{Example}

Fortunately, in many natural settings, the scalar curvature is indeed uniformly
bounded along the Ricci flow, and here we raise the following
\begin{Conjecture}
Given $\alpha\in (0,1)$, there are positive constants
$\delta=\delta(m,\alpha)$ and $\varepsilon=\varepsilon(m,\alpha)$ such that
if $(M,g)$ is an $m$-dimensional Ricci flow solution on $[0,T]$ with each of whose
time slices being complete, and for some $p\in M$ it satisfies
\begin{align*}
\sup_{B_{g(0)}(p,1)} |\Rm_{g(0)} |_{g(0)} \le 1,\qquad
d_{\rm GH}\big(B_{g(0)}(p,1),\mathbb{B}^k(1)\big) \le
\delta,\qquad \sup_{M\times [0,T]} |\Sc_{g(t)} | \le 1,
\end{align*}
then we have for any $t\in \big[0,\varepsilon^2\big]$, the curvature bound
\begin{align*}
\sup_{B_{g(t)}(p,\varepsilon)} |\Rm_{g(t)} |_{g(t)} \le \alpha
t^{-1} + \varepsilon^{-2}.
\end{align*}
\end{Conjecture}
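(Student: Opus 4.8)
The plan is to reduce the assertion, by passing to the Riemannian universal covering of a fixed‑size ball centred at $p$, to Perelman's pseudo‑locality theorem in the form that requires a Ricci lower bound and an almost‑Euclidean volume at the initial time, using the scalar curvature bound $|\Sc_{g(t)}|\le1$ only to keep the lifted flow under control on a definite time interval. In outline: (i) the collapsed initial data on $B_{g(0)}(p,1)$, together with $|\Rm_{g(0)}|\le1$ and $d_{\rm GH}(B_{g(0)}(p,1),\mathbb{B}^k(1))\le\delta$, unwinds on the local universal cover to a metric that is almost Euclidean at a definite scale; (ii) the given solution pulls back to a Ricci flow on that cover, since Ricci flow respects local isometries; (iii) the scalar curvature bound controls distance and volume distortion, so the region of interest stays inside the covered set and the cover's geometry does not degenerate on a definite time interval; (iv) pseudo‑locality applied on the cover yields the desired curvature bound there, and it descends because $|\Rm|$ is invariant under local isometries.

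\emph{Step 1 (unwinding the initial data).} For $\delta$ small, $\big(B_{g(0)}(p,\tfrac12),g(0)\big)$ falls into the Cheeger--Fukaya--Gromov local structure regime (cf.\ Theorems~\ref{thm: Fukaya1} and~\ref{thm: CFG}): the image of $\pi_1(B_{g(0)}(p,\sigma),p)$ in $\pi_1(B_{g(0)}(p,\tfrac12),p)$, for a fixed small $\sigma=\sigma(m)$, is almost nilpotent of rank $m-k$, and on the Riemannian universal covering $\pi\colon(\widetilde V,\widetilde g(0),\widetilde p)\to\big(B_{g(0)}(p,\tfrac12),g(0)\big)$ the $(m-k)$‑dimensional infranil fibres are unwound to the full simply connected nilpotent Lie group, restoring the collapsed directions, so that $\widetilde g(0)$ is non‑collapsed near $\widetilde p$. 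Concretely one obtains a definite $r_0=r_0(m)\in(0,1)$ and a lower injectivity radius bound at $\widetilde p$; since $|\Rm_{\widetilde g(0)}|\le1$ on $B_{\widetilde g(0)}(\widetilde p,r_0)$, Bishop--G\"unther comparison gives $|B_{\widetilde g(0)}(\widetilde p,r)|\ge\big(1-\Psi(\delta|m)-C(m)r^2\big)\omega_mr^m$ for $r\le r_0$. Rescaling by a factor $r(m,\delta_P)^{-1}$ one may then arrange $\Rc_{\widetilde g(0)}\ge-\delta_P$ on $B_{\widetilde g(0)}(\widetilde p,1)$ and $|B_{\widetilde g(0)}(\widetilde p,1)|\ge(1-\delta_P)\omega_m$, i.e.\ condition~(2) of the pseudo‑locality theorem holds for $\widetilde g(0)$ at $\widetilde p$.

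\emph{Step 2 (lifting and controlling the flow).} The restriction of the given solution to $B_{g(0)}(p,\tfrac12)$ is a Ricci flow there, and its pull‑back $\widetilde g(t):=\pi^{*}g(t)$ is a Ricci flow on $\widetilde V$ with initial data $\widetilde g(0)$ and $|\Sc_{\widetilde g(t)}|\le1$. To run pseudo‑locality for $\widetilde g(t)$ one must know that $B_{\widetilde g(t)}(\widetilde p,L)\subset\widetilde V$ for $t\in[0,\tau]$, with $L,\tau$ definite; this is where the scalar bound enters. By the distance‑distortion estimates for Ricci flows with bounded scalar curvature (Bamler--Zhang), $d_{\widetilde g(t)}$ and $d_{\widetilde g(0)}$ on $B_{\widetilde g(0)}(\widetilde p,\tfrac12)$ are comparable up to errors controlled by $\sqrt t$, so $B_{\widetilde g(t)}(\widetilde p,L)\subset\widetilde V$ for $t\le\tau$; combined with $\partial_t\,dV_{\widetilde g(t)}=-\Sc_{\widetilde g(t)}\,dV_{\widetilde g(t)}$ and an $\varepsilon$‑regularity statement for bounded‑scalar‑curvature flows, this also rules out a Topping‑type degeneration of the covering geometry on $[0,\tau]$. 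One then applies Perelman's pseudo‑locality theorem (condition~(2), in the complete--non‑compact form of \cite[Proposition~6.1]{HW20a}, or in a localized form valid once the flow is defined on the slightly larger parabolic neighbourhood just produced) to $(\widetilde V,\widetilde g(t),\widetilde p)$, obtaining for $t$ in a definite interval $\sup_{B_{\widetilde g(t)}(\widetilde p,\varepsilon_P)}|\Rm_{\widetilde g(t)}|\le\alpha t^{-1}+\varepsilon_P^{-2}$. Undoing the rescaling and pushing down by the local isometry $\pi$ -- under which $|\Rm|$ is invariant and, by the distance control, metric balls based at $p$ of radius $\le\varepsilon_P$ lie in its image -- yields $\sup_{B_{g(t)}(p,\varepsilon)}|\Rm_{g(t)}|\le\alpha t^{-1}+\varepsilon^{-2}$ for $t\in[0,\varepsilon^2]$, with $\varepsilon=\varepsilon(m,\alpha)$ chosen suitably small.

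\emph{The main obstacle.} The delicate point is Step~2: with only the scalar curvature bounded along the flow -- not $|\Rm|$, nor even $\Rc$ from below -- one must show that the almost‑locally‑Euclidean covering geometry at $\widetilde p$ established at $t=0$ persists on a definite time interval, i.e.\ that no curvature floods into $\widetilde V$ from outside and no collapse of the cover occurs. Topping's example shows that the bare Gromov--Hausdorff collapsing hypothesis is not preserved by the flow (there the scalar curvature blows up), so the scalar bound must be leveraged precisely here, presumably through a combination of the Bamler--Zhang distance and heat‑kernel estimates, the volume evolution $\partial_t\,dV=-\Sc\,dV$, and an $\varepsilon$‑regularity theorem for Ricci flows with bounded scalar curvature; making this robust enough to feed the pseudo‑locality machinery on the cover -- in particular, producing a genuinely localized pseudo‑locality statement that tolerates the incompleteness of $\widetilde V$ -- is, to our knowledge, the crux of the conjecture.
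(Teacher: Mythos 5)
The statement you are asked about is not a theorem of the paper: it is posed there as an open \emph{Conjecture} (motivated by Perelman's question on removing the volume hypothesis and by Topping's example), and the paper supplies no proof of it. Your text is accordingly a strategy sketch rather than a proof, and by your own closing paragraph the decisive step is left open. Concretely, the gap is in Step~2. Perelman's pseudo-locality theorem, in every form quoted in the paper (\cite[Theorem~10.1]{Perelman}, \cite[Theorem~8.1]{CTY11}, \cite[Proposition~6.1]{HW20a}), requires the flow to which it is applied to have complete time slices (or at least a genuinely localized substitute, which is not available), whereas the pulled-back flow $\widetilde g(t)=\pi^{*}g(t)$ lives on the incomplete cover $\widetilde V$ of a fixed ball. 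To legitimize the application you would need to show that on a definite time interval no curvature and no collapsing ``floods in'' from outside the covered region, and the only tool the hypotheses give you is the global bound $|\Sc_{g(t)}|\le 1$. The Bamler--Zhang distance-distortion and $\varepsilon$-regularity results \cite{BZ17,BZ19} you invoke are proved for closed (or complete, suitably non-collapsed) flows and rely on global inputs such as Perelman's no-local-collapsing; they do not apply as stated to an incomplete local cover, and the assertion that they ``rule out a Topping-type degeneration of the covering geometry'' is precisely the content of the conjecture, not something that follows from citing them. In other words, the argument is circular at its core: the persistence of the almost-Euclidean covering geometry under a scalar-curvature bound alone is assumed in order to run pseudo-locality, but it is exactly what one must prove.

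Step~1 is essentially fine (bounded sectional curvature plus strong collapsing puts you in the Cheeger--Fukaya--Gromov regime, so the local universal cover is non-collapsed and, after rescaling, almost Euclidean in the sense of condition~(2) of the pseudo-locality theorem), and the reduction ``unwind, apply pseudo-locality upstairs, push $|\Rm|$ back down'' is the natural plan -- indeed it mirrors how Theorems~\ref{thm: HW20b1} and~\ref{thm: HKRX18b} are proved, where however a Ricci lower bound \emph{for all time slices is not needed because the flow is constructed} via Lemma~\ref{lem: HW20b0} from almost-Euclidean covering data. Here the flow is given, its later-time geometry is constrained only through $\Sc$, and Shi-type or Hochard-type localizations give you nothing about it. Until you produce a localized pseudo-locality statement (or a distance/curvature propagation estimate) valid for incomplete flows with merely bounded scalar curvature, the proof is incomplete; as written, it restates the conjecture's difficulty rather than resolving it.
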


\subsection{Distance distortion estimates}\label{section4.3}
Once the Ricci flow exists for a definite amount of time, for the purpose of
smoothing, it is of key importance to compare the initial metric with the
evolved metric. In general, the distance distortion estimate for Ricci flows
is of central importance in the understanding of the geometry along the Ricci
flows, and we refer the readers to \cite{BZ17, BZ19,CW12,CW19,CW14,H95,Foxy1808, TW15} for previous works on this topic in various settings.
Very recently, based on the previous contributions, especially the local
entropy theory developed in \cite{Wang18}, the distance distortion estimate for
collapsing initial data \cite{Foxy1808}, and the H\"older distance estimate for
non-collapsing initial data in \cite{HKRX18}, we obtain the following H\"older
distance estimate for collapsing initial data \cite[Theorem~A.1]{HW20b}:
\begin{Theorem}\label{thm: HW20b2}
Given a positive integer $m$, positive constants $\bar{C}_0$, $C_R$, $T\le 1$
and $\alpha\in (0,1)$, there are constants
$C_D\big(\bar{C}_0,C_R,m\big)\ge 1$ and $T_D\big(\bar{C}_0,C_R,m\big)\in (0,T]$ such that for an
$m$-di\-men\-sio\-nal complete Ricci flow $(M,g(t))$ defined for $t\in [0,T]$, if
for some $x_0\in M$ and any $t\in [0,T]$ we have
\begin{gather*}
\Sc_{g(0)}\ge -C_R\quad \text{in}\ B_{g(0)}(x_0,10),\\ 
 |\Rc_{g(t)} |_{g(t)}\le \alpha t^{-1}\quad \text{in}\
B_{g(t)}\big(x_0,10+\sqrt{t}\big),
\end{gather*}
and the initial metric has a uniform bound $\bar{C}_0$ on the doubling and
Poincar\'e constant for the geodesic ball $B_{g(0)}(x_0,10)$, then
for any $x,y\in B_{g(0)}\big(x_0,\sqrt{T_D}\big)$ and $t\in [0,T_D]$, we
have
\begin{gather*}
C_D^{-1}d_{g(0)}(x,y)^{1+2\alpha}d_{g(0)}(x,y) \le d_{g(t)}(x,y) \le
C_Dd_{g(0)}(x,y)^{1-2\alpha}.
\end{gather*}
\end{Theorem}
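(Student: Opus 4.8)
The plan is to establish the two inequalities separately, each resting on a different mechanism: the upper bound $d_{g(t)}(x,y)\le C_D d_{g(0)}(x,y)^{1-2\alpha}$ follows from an integration of the Ricci flow equation combined with a heat-kernel-type length control, while the lower bound uses the same ingredients applied to the backward direction together with the local entropy / Poincaré machinery of \cite{Wang18, Foxy1808}. First I would fix the short time $T_D$ so that all estimates take place inside $B_{g(t)}(x_0,10+\sqrt{t})$ where the bound $|\Rc_{g(t)}|_{g(t)}\le \alpha t^{-1}$ holds; the factor $10$ in the hypotheses is deliberately generous so that any minimizing $g(t)$-geodesic between points of $B_{g(0)}(x_0,\sqrt{T_D})$ stays well inside this region — this requires a preliminary \emph{a priori} containment argument, using that the distance cannot distort too fast on $[0,T_D]$, which one bootstraps from a crude version of the estimate itself. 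This bootstrapping is routine but must be set up carefully.

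For the upper bound, I would proceed as in \cite{HKRX18}: for $x,y$ close in the $g(0)$-metric, one does \emph{not} integrate $\partial_t d_{g(t)}(x,y)=-\int_\gamma \Rc_{g(t)}(\gamma',\gamma')$ directly (the $\alpha t^{-1}$ bound is not integrable as $t\to 0$ in a way that keeps the constant dimension-free), but instead one compares $d_{g(t)}(x,y)$ at a well-chosen \emph{positive} time $t_0\asymp d_{g(0)}(x,y)^2$ — below scale $t_0$ the flow has not yet ``seen'' the separation, so one controls $d_{g(t_0)}(x,y)$ by a fixed multiple of $d_{g(0)}(x,y)$ using the scalar lower bound and the doubling/Poincaré constants (this is where Perelman-type reduced-distance or Wang's local entropy monotonicity enters, giving a on-diagonal heat kernel bound and hence a distance comparison at the single scale $t_0$); then from $t_0$ up to $t$ one integrates $\Rc\le \alpha s^{-1}$ to get $d_{g(t)}(x,y)\le \big(t/t_0\big)^{\alpha}d_{g(t_0)}(x,y)\lesssim d_{g(0)}(x,y)^{1-2\alpha}$, since $t_0^{-\alpha}\asymp d_{g(0)}(x,y)^{-2\alpha}$. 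The lower bound is the mirror image: one shows $d_{g(0)}(x,y)\le C\, d_{g(t)}(x,y)^{1-2\alpha}$ by running the same comparison backward, bounding how much two points that are $g(t)$-far can have been $g(0)$-close, again via the parabolic scaling $t_0\asymp d_{g(t)}(x,y)^2$ and the lower Ricci bound implied on a slightly larger ball; rearranging yields the stated $C_D^{-1}d_{g(0)}(x,y)^{1+2\alpha}\le d_{g(t)}(x,y)$.

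The technical heart — and the main obstacle — is the single-scale comparison $d_{g(t_0)}(x,y)\asymp d_{g(0)}(x,y)$ at $t_0\asymp d_{g(0)}(x,y)^2$ under only a \emph{scalar} curvature lower bound and doubling/Poincaré bounds on the initial ball: with collapsing initial data one has no lower volume bound, so classical heat-kernel lower bounds fail, and one must substitute the local entropy estimates of \cite{Wang18} (which survive collapse because they are normalized by the intrinsic volume scale) together with the distance distortion input of \cite{Foxy1808}. Concretely, I expect to need: (i) a local $\kappa$-noncollapsing statement at scale $\sqrt{t_0}$ relative to the collapsed volume, coming from entropy monotonicity and the scalar bound; (ii) the resulting two-sided heat kernel bounds at time $t_0$ localized in $B_{g(0)}(x_0,10)$; and (iii) the standard fact that such heat kernel bounds force $d_{g(t_0)}$ and $d_{g(0)}$ to be comparable up to multiplicative constants depending only on $\bar{C}_0, C_R, m$. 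Assembling (i)--(iii) into a clean statement valid uniformly as the injectivity scale shrinks is where the real work lies; once that single-scale estimate is in hand, the interpolation across $[t_0,t]$ using $|\Rc|\le\alpha s^{-1}$ and the choice $T_D(\bar C_0,C_R,m)$ small enough to keep everything inside the good region completes the proof.
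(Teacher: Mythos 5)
Your overall architecture coincides with the route the paper points to: the survey itself contains no proof of Theorem~\ref{thm: HW20b2} but cites \cite[Theorem~A.1]{HW20b}, which is obtained precisely by combining a single-scale distance-distortion estimate for collapsing initial data (\cite{Foxy1808}, resting on the local entropy theory of \cite{Wang18}) with the H\"older-type interpolation of \cite{HKRX18} across $[t_0,t]$ using $|\Rc_{g(s)}|_{g(s)}\le \alpha s^{-1}$. Your reduction~--- compare $d_{g(t_0)}$ with $d_{g(0)}$ at the parabolic scale $t_0\asymp d_{g(0)}(x,y)^2$, then integrate the $\alpha s^{-1}$ Ricci bound~--- is exactly this mechanism, and you correctly identify the comparison with the initial time slice as the key difficulty (as the paper itself remarks, the estimate is easy between positive times).

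Two points, however, keep the proposal from being a proof. First, the ``technical heart'' cannot be dispatched by your items (i)--(iii): the two-sided comparison $d_{g(t_0)}\asymp d_{g(0)}$ at $t_0\asymp d_{g(0)}(x,y)^2$ under only doubling/Poincar\'e bounds and a scalar curvature lower bound, with no volume lower bound, is precisely the content of the cited distance-distortion theorem of \cite{Foxy1808}; in the collapsed setting one does not get Gaussian lower bounds for the heat kernel in terms of $d_{g(t_0)}$, so your step (iii) (``heat kernel bounds force $d_{g(t_0)}$ and $d_{g(0)}$ to be comparable'') is the whole difficulty rather than a standard consequence, and a self-contained argument would have to reproduce that analysis or cite it outright. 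Second, your lower bound as written does not deliver the stated exponent: rearranging $d_{g(0)}\le C\,d_{g(t)}^{1-2\alpha}$ gives $d_{g(t)}\ge C'\,d_{g(0)}^{1/(1-2\alpha)}$, and since $1/(1-2\alpha)>1+2\alpha$ this is strictly weaker for small distances (and meaningless as $\alpha\to \tfrac12$); the intended bound is $C_D^{-1}d_{g(0)}(x,y)^{1+2\alpha}\le d_{g(t)}(x,y)$ (the extra factor of $d_{g(0)}(x,y)$ in the displayed statement is evidently a typo). The correct route is the forward one: from the single-scale estimate get $d_{g(t_0)}(x,y)\ge c\,d_{g(0)}(x,y)$ at $t_0\asymp d_{g(0)}(x,y)^2$, then integrate the Dini inequality $\partial_s d_{g(s)}\ge -\alpha s^{-1}d_{g(s)}$ on $[t_0,t]$ to obtain $d_{g(t)}\ge c\,d_{g(0)}^{1+2\alpha}t^{-\alpha}\ge c\,d_{g(0)}^{1+2\alpha}$ for $t\le T_D\le 1$; this also avoids the circularity of choosing $t_0$ in terms of the unknown quantity $d_{g(t)}(x,y)$.
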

Notice that the curvature assumption is natural (in view of the pseudo-locality
theorem) and the comparison with the initial time slice is the key difficulty
-- for positive time slices the Ricci curvature bound makes the estimate
trivial. Another handy distance distortion estimate for the application of
smoothing the collapsing initial is the following
\begin{Lemma}\label{lem: dis_dis1}
For any $\alpha\in (0,1)$, there is a positive quantity $\Psi_D(\alpha|m)$
with $\lim\limits_{\alpha\to 0}\Psi_D(\alpha|m)=0$, such that under the assumption of
Theorem~{\rm \ref{thm: HW20b1}} or Theorem~{\rm \ref{thm: HKRX18b}}, for any $x,y\in
B_g(p,2)$ and any $t\in \big(0,\varepsilon^2_P(m,\alpha)\big]$, if $d_g(x,y)\le
\sqrt{t}$, then we have
\begin{gather*}
|d_{g(t)}(x,y)- d_g(x,y)| \le \Psi_D(\alpha|m)\sqrt{t}.
\end{gather*}
\end{Lemma}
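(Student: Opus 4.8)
The plan is to derive the two-sided estimate $|d_{g(t)}(x,y) - d_g(x,y)| \le \Psi_D(\alpha|m)\sqrt{t}$ from the Ricci flow equation together with the curvature control already provided by Theorems \ref{thm: HW20b1} and \ref{thm: HKRX18b}, namely $|\Rm_{g(t)}|_{g(t)} \le \alpha t^{-1} + \varepsilon_O^{-2}$ (respectively $\le \alpha t^{-1}$) for $t \in (0,\varepsilon_P^2]$. The upper bound on $d_{g(t)}(x,y)$ is the easy direction: since $\partial_t g = -2\Rc_{g(t)}$, along any fixed $g(0)$-minimizing geodesic $\gamma$ from $x$ to $y$ of length $d_g(x,y) \le \sqrt{t}$ one has, writing $L(s)$ for its $g(s)$-length, $|\partial_s L(s)| \le \sup |\Rc_{g(s)}| \cdot L(s) \le (\alpha s^{-1} + \varepsilon_O^{-2}) L(s)$; integrating the $\alpha s^{-1}$ term from $0$ to $t$ would diverge, so instead one integrates only on the range where the geodesic is short and uses the bound $\int_0^t \alpha s^{-1} \cdot (\text{comparison factor})\, ds$ after first killing the logarithmic divergence by a dyadic argument, exactly as in the distance distortion estimates cited in Section \ref{section4.3}. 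The cleaner route is to invoke Theorem \ref{thm: HW20b2}: the hypotheses of Theorems \ref{thm: HW20b1}/\ref{thm: HKRX18b} supply a scalar curvature lower bound on a fixed ball (from the conformal construction of Lemma \ref{lem: HW20b0}, which preserves a scalar lower bound) and, via the almost-Euclidean covering geometry already established, uniform doubling and Poincaré constants on $B_g(p,10)$; hence Theorem \ref{thm: HW20b2} yields $C_D^{-1} d_g(x,y)^{1+2\alpha} \le d_{g(t)}(x,y) \le C_D d_g(x,y)^{1-2\alpha}$ on a slightly smaller ball and for $t \le T_D$.

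From the Hölder-type bound the lemma follows by a short computation once we specialize to the regime $d_g(x,y) \le \sqrt{t}$. Indeed, when $r := d_g(x,y) \le \sqrt{t} \le \varepsilon_P$, both $r^{2\alpha}$ and $r^{-2\alpha}$ are pinched near $1$: precisely, $r^{-2\alpha} - 1 \le (\sqrt{t})^{-2\alpha}\cdot(\text{something})$ is the wrong direction, so instead one writes $|d_{g(t)}(x,y) - r| \le |C_D r^{1-2\alpha} - r| + |r - C_D^{-1} r^{1+2\alpha}| \le r\,(C_D r^{-2\alpha} - 1) + \dots$, which still carries a bad factor $r^{-2\alpha}$. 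The fix is to rescale: apply the estimate at the parabolic scale $\sqrt t$. Setting $\tilde g(s) = t^{-1} g(ts)$, the rescaled flow satisfies the same curvature bounds with $\varepsilon$ replaced appropriately, the rescaled distance $\tilde d(x,y) = t^{-1/2} d_g(x,y) \le 1$, and Theorem \ref{thm: HW20b2} (or a direct integration of $|\Rc| \le \alpha s^{-1}$ over $s \in [\tfrac12, 1]$ in rescaled time, where the logarithmic term is harmless) gives $|\tilde d_1(x,y) - \tilde d_0(x,y)| \le \Psi(\alpha|m)$ with $\Psi \to 0$ as $\alpha \to 0$; unscaling multiplies through by $\sqrt t$, which is exactly the claimed bound. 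The role of the hypothesis $d_g(x,y) \le \sqrt t$ is precisely to guarantee $\tilde d_0(x,y) \le 1$ so the Hölder exponent distortion contributes only a bounded multiplicative constant that tends to $1$ with $\alpha$.

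The main obstacle is the interaction of the $\alpha t^{-1}$ curvature blow-up with the comparison to the \emph{initial} time slice $g = g(0)$. For positive times the Ricci curvature bound is genuine (not just singular), so distortion between $g(t_1)$ and $g(t_2)$ with $0 < t_1 < t_2$ is controlled by $\exp\big(\int_{t_1}^{t_2}\alpha s^{-1}\,ds\big) = (t_2/t_1)^\alpha$, which is fine; the trouble is letting $t_1 \to 0$, where one must use that the initial metric is not collapsed on the local covering (hence satisfies doubling/Poincaré) to run the heat-kernel / local-entropy comparison of \cite{Wang18, Foxy1808} that underlies Theorem \ref{thm: HW20b2}. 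Once that input is granted — and it is, since it is exactly what Theorem \ref{thm: HW20b2} packages — the remaining steps are the elementary rescaling and pinching argument sketched above, and the function $\Psi_D(\alpha|m)$ is extracted from the constant $C_D$ together with the $2\alpha$ Hölder defect, both of which degenerate to the identity as $\alpha \to 0$.
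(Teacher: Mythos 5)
Your argument does not close the real gap, and the failure is quantitative. The content of Lemma~\ref{lem: dis_dis1} is not that $|d_{g(t)}-d_g|\le C\sqrt t$, but that the coefficient $\Psi_D(\alpha|m)$ tends to $0$ as $\alpha\to 0$, and neither of your two mechanisms can produce that smallness. First, the constant $C_D$ in Theorem~\ref{thm: HW20b2} depends only on $\big(\bar C_0,C_R,m\big)$; it is a fixed constant $\ge 1$ and does not ``degenerate to the identity as $\alpha\to 0$'' as you assert. Hence even after the parabolic rescaling $\tilde g(s)=t^{-1}g(ts)$, which correctly normalizes $\tilde d_0=t^{-1/2}d_g\le 1$, the H\"older estimate only yields $\big|\tilde d_1-\tilde d_0\big|\le C\big(\bar C_0,C_R,m\big)$, i.e., $|d_{g(t)}-d_g|\le C\sqrt t$ with $C$ independent of $\alpha$ -- a true but strictly weaker statement than the lemma. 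Second, the ``direct integration of $|\Rc|\le\alpha s^{-1}$ over $s\in[1/2,1]$ in rescaled time'' compares two positive time slices only; the whole difficulty, which you yourself identify, is the comparison with the initial slice, where naive integration produces the multiplicative factor $(t/s)^{c\alpha}\to\infty$ as $s\to 0$, and no dyadic decomposition removes a multiplicative divergence. So the expansion direction $d_{g(t)}\le d_g+\Psi_D(\alpha|m)\sqrt t$ -- which is the hard direction, not the ``easy'' one -- is never established with an $\alpha$-small error. (The contraction direction is indeed accessible, via Perelman-type endpoint estimates using only the Ricci \emph{upper} bound $\alpha t^{-1}+\varepsilon^{-2}$ near $x$ and $y$; no analogous local mechanism exists for expansion.) Note also that the doubling and Poincar\'e constants you invoke hold automatically under $\Rc_g\ge-(m-1)g$ at bounded scales, so they cannot be where the Reifenberg/maximal-rank hypotheses of Theorems~\ref{thm: HW20b1} and~\ref{thm: HKRX18b} enter.

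For comparison, the paper does not reprove the lemma: it records it as a rewording of \cite[Lemma~1.11]{HKRX18} and refers to \cite[Lemma~4.1]{HW20b} for a proof. As the paper indicates for the variant Lemma~\ref{lem: dis_dis} (whose proof follows \cite[Lemma~2.10]{CRX19}), the underlying argument is a contradiction/parabolic-rescaling argument: if the estimate failed with a definite defect for a sequence with $\alpha_i\to 0$, the rescaled flows would converge -- using pseudo-locality together with the non-collapsed, almost-Euclidean local covering geometry supplied by the hypotheses -- to the static Euclidean flow, on which distances are undistorted. The smallness in $\alpha$ comes from this compactness step, not from a H\"older estimate with a fixed constant; supplying that compactness argument (or an equivalent) is exactly what is missing from your proposal.
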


This lemma is a slight re-wording of \cite[Lemma~1.11]{HKRX18}, which concerns
non-collapsing initial data. See also \cite[Lemma~4.1]{HW20b} for a~proof.

\subsection{Applications of the Ricci flow local smoothing technique}\label{section4.4}
With the Ricci flow smoothing tool kit at hand (the flow existence time lower
bounds and the distance distortion estimates), we could in many cases reduce our
consideration of collapsing manifolds with Ricci curvature bounded below to the
classical collapsing geometry with bounded sectional curvature.

Locally, one could obtain infranil fiber bundle structure around points where
the Ricci flow smoothing results (Theorems~\ref{thm: HW20b1} and~\ref{thm: HKRX18b}) apply:
\begin{Theorem}\label{thm: local_infranil}
There is a positive constant $\delta=\delta(m)$ such that if $(M,g)$ is an
$m$-dimensional complete Riemannian manifold with $\Rc_g\ge -(m-1)g$, then for
any $p\in M$ which has a geodesic ball satisfying
\begin{align*}
d_{\rm GH}\big(B_g(p,2),\mathbb{B}^k(2)\big) < \delta
\end{align*}
and one of the following
conditions:
\begin{enumerate}\itemsep=0pt
 \item[$1)$] $p$ is a $(\delta,2)$-Reifenberg point~{\rm \cite{HKRX18}}, or
 \item[$2)$] $\rank \Gamma_{\delta}(p)=m-k$~{\rm \cite{HW20b}},
\end{enumerate}
there is an open neighborhood $U$ of $p$ such that $B_g(p,1-2\delta )\Subset
U\Subset B_g(p,1+2\delta)$, and $U$ is diffeomorphic to an infranil fiber
bundle over~$\mathbb{B}^k(1)$, with the extrinsic diameter of the fibers
bounded above by~$2\delta$.
\end{Theorem}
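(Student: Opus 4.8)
The plan is to use the Ricci flow local smoothing theorems to replace $g$, on a ball slightly larger than $B_g(p,1)$, by a nearby metric with bounded sectional curvature, and then invoke the classical theory of collapsing with bounded sectional curvature. \textbf{Step 1 (smoothing).} Fix a small auxiliary constant $\alpha\in(0,10^{-1})$, to be chosen at the end in terms of $m$. Under condition~(2) the collapsing model $\mathbb{B}^k(2)$ is a smooth manifold, so the orbifold group appearing in Theorem~\ref{thm: HW20b1} is trivial: its hypothesis~(1) is vacuous, its hypothesis~(2) is Gromov--Hausdorff closeness of small balls to Euclidean balls, which every $q\in B_g(p,R)$ inherits from $B_g(p,2)$ as soon as $R$ is small enough that $B_g(q,R/4)\subset B_g(p,2)$, and its hypothesis~(3) is maximality of the rank of the pseudo-local fundamental group, which I would propagate from $p$ to all such $q$ using that the rank is pinched between the uniform upper bound $m-k$ (Theorem~\ref{thm: NZ18}) and the value forced near $p$ by the hypothesis. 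Applying Theorem~\ref{thm: HW20b1} with $K=\overline{B_g(p,3/2)}$ and $R$ small then produces a Ricci flow $g(t)$ on $B_g(p,3/2)$, $t\in[0,\varepsilon_O^2]$, with $g(0)=g$ and $\sup|\Rm_{g(t)}|\le\alpha t^{-1}+\varepsilon_O^{-2}$. Under condition~(1) the same conclusion follows from Theorem~\ref{thm: HKRX18b}: I would cover $\overline{B_g(p,3/2)}$ by a finite net of Reifenberg points, start the flow around each by that theorem with a fixed small $\rho$, and patch the pieces by uniqueness of bounded-curvature Ricci flow, obtaining a single flow on $B_g(p,3/2)$ with $\sup|\Rm_{g(t)}|\le\alpha t^{-1}$.

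\textbf{Step 2 (geometry at a fixed time).} Freeze the flow at a definite positive time $t_0$ within its existence interval and set $g_1:=g(t_0)$, so that $\sup_{B_g(p,3/2)}|\Rm_{g_1}|\le\Lambda(m,\alpha)$; rescaling $g_1$ by the constant $\Lambda$ — which only dilates radii — I may assume $|\Rm_{g_1}|\le1$ on a ball of radius at least $1$ about $p$. To carry the Gromov--Hausdorff hypothesis through the flow I would use the distance distortion estimates: Lemma~\ref{lem: dis_dis1} controls the change of distance at scale $\sqrt{t_0}$ with an error $\Psi_D(\alpha|m)\sqrt{t_0}$ depending only on $\alpha$, and chaining it over a $\sqrt{t_0}$-net, together with the H\"older estimate of Theorem~\ref{thm: HW20b2}, shows that the identity map $(B_g(p,3/2),g)\to(B_g(p,3/2),g_1)$ distorts distances by a factor lying in $[1-\Psi,1+\Psi]$ with $\Psi=\Psi(\alpha|m)\to0$ as $\alpha\to0$. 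Hence $(B_{g_1}(p,1),g_1)$ is $(\delta+\Psi)$-Gromov--Hausdorff close to the flat ball $\mathbb{B}^k(1)$.

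\textbf{Step 3 (classical collapsing and transfer back).} Now $g_1$ has bounded sectional curvature, hence locally $(\rho(m),v(m))$-bounded Ricci covering geometry, and it collapses with Gromov--Hausdorff limit the flat $k$-ball $\mathbb{B}^k(1)\in\mathcal{M}_{\rm Rm}(k,D,v)$. Applying the local infranil fiber bundle theorem — Theorem~\ref{thm: HR20}, equivalently Fukaya's Theorem~\ref{thm: Fukaya1} in the localized form valid over an open subset of a $k$-manifold — yields an open set $U'$ with $B_{g_1}(p,1-\Psi)\Subset U'\Subset B_{g_1}(p,1+\Psi)$ and a fiber bundle $f\colon U'\to\mathbb{B}^k(1)$ whose fibers are $(m-k)$-dimensional infranil manifolds, with structure group a generalized torus group as in Theorem~\ref{thm: singular_fibration}, and which is a $\Psi$-Gromov--Hausdorff approximation; in particular each fiber has $g_1$-extrinsic diameter $\le\Psi$. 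Since by Step~2 the identity $(B_g(p,3/2),g)\to(B_{g_1}(p,3/2),g_1)$ is a near-isometry and the underlying smooth structure is unchanged, a slight shrinking $U$ of $U'$ is an open neighborhood of $p$ with $B_g(p,1-2\delta)\Subset U\Subset B_g(p,1+2\delta)$, $f|_U$ remains a smooth infranil fiber bundle over $\mathbb{B}^k(1)$, and the $g$-extrinsic diameter of each fiber is $\le2\delta$ — provided $\delta=\delta(m)$, and then $\alpha=\alpha(m)$, are fixed small enough, where $\delta$ governs the initial Gromov--Hausdorff error and $\alpha$ (independently) governs the flow distortion.

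\textbf{The main obstacle} is the comparison in Step~2 of $g_1$ with $g$ \emph{across the initial time slice}, where only a lower Ricci bound is available and the best control is the H\"older-type estimate of Theorem~\ref{thm: HW20b2} rather than a bi-Lipschitz one; consequently $\alpha$ must be taken small enough that the exponents $1\pm2\alpha$ are close to $1$ for the Gromov--Hausdorff closeness and the fiber-diameter bound both to survive the flow, and all the smallness thresholds (curvature bound after rescaling, existence time, distortion) must be kept mutually compatible. A secondary difficulty, present only under condition~(2), is the propagation of the maximal-rank hypothesis from the single point $p$ to a full ball $B_g(p,R)$, which is where the $\varepsilon$-regularity Theorem~\ref{thm: NZ18} is essential and which, in the genuinely orbifold collapsing setting, becomes considerably more delicate.
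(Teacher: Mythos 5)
Your overall route is the paper's: run the local Ricci flow smoothing (Theorem~\ref{thm: HW20b1} under condition~(2), Theorem~\ref{thm: HKRX18b} under condition~(1)), carry the Gromov--Hausdorff hypothesis to a definite positive time slice via the distance distortion estimate (Lemma~\ref{lem: dis_dis1}), apply the classical bounded-sectional-curvature fibration theory to the smoothed metric, and pull the bundle back through the near-isometric identity map; the paper invokes \cite[Theorem~2.6]{CFG92} where you invoke Theorem~\ref{thm: HR20} (localized Fukaya), which is an immaterial difference, as is your additional use of Theorem~\ref{thm: HW20b2}.

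The one step that does not work as written is your treatment of condition~(1): covering $\overline{B_g(p,3/2)}$ by Reifenberg points, starting a flow around each by Theorem~\ref{thm: HKRX18b}, and ``patching the pieces by uniqueness of bounded-curvature Ricci flow''. There is no such uniqueness for local solutions: the flows produced by Theorem~\ref{thm: HKRX18b} are restrictions of different complete auxiliary flows built via Hochard's conformal extension (Lemma~\ref{lem: HW20b0}), and two local Ricci flow solutions agreeing at time zero on an overlap need not agree for $t>0$~-- Chen--Zhu type uniqueness requires complete bounded-curvature solutions with identical complete initial data, and Ricci flow has no domain-of-dependence principle that would localize it. The fix is to not patch at all: since $p$ itself is assumed to be a $(\delta,2)$-Reifenberg point, a single application of Theorem~\ref{thm: HKRX18b} at scale $\rho$ close to $1$ already yields one flow on a ball about $p$ large enough to carry the fibration over $\mathbb{B}^k(1)$; equivalently, one starts a single flow by Lemma~\ref{lem: HW20b0} and verifies the pseudo-locality hypothesis pointwise along that one flow, which is how the cited proofs in \cite{HKRX18,HW20b} proceed. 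Apart from this (and the slightly overstated ``bi-Lipschitz factor $1\pm\Psi$'' in your Step~2, where the additive estimate of Lemma~\ref{lem: dis_dis1} at scales at least $\sqrt{t_0}$ is what is available and is all the Gromov--Hausdorff comparison needs), your argument matches the paper's.
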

\begin{Remark}\label{rmk: orbifold_fibration}
In fact, as shown in \cite[Theorem~1.4]{HW20b}, there is
a positive constant $\delta=\delta(m,l)<1$ such that if $p\in M$ has a geodesic
ball satisfying $d_{\rm GH}\big(B_g(p,2),\mathbb{B}^k(2)\slash G\big) < \delta$
for some $G<{\rm O}(k)$ with $|G|\le l$, $\rank \Gamma_{\delta}(p)=m-k$, and there
exists a \emph{surjective} $\phi \in \operatorname{Hom}(\Gamma_{\delta}(p), G)$, then the same
infranil fiber bundle structure over the orbifold neighborhood
$\mathbb{B}^k(1) \slash G$ can be obtained. One can of course replace the
assumption on the nilpotency rank with the Reifenberg property as in item~(1).
See also \cite[Section~7]{Fukaya89} for related concepts.
\end{Remark}

Theorem~\ref{thm: local_infranil} with condition~(2) generalizes a local fiber
bundle result due to Naber and Zhang \cite[Proposition~6.6]{NaberZhang} from
the case of manifolds with bounded Ricci curvature to manifolds with Ricci
curvature only bounded from below. It is also a localization of \cite[Theorem~B]{HKRX18}. To prove this theorem, we first notice that for any $\alpha \in
(0,1)$ sufficiently small the assumptions enable us to run a Ricci flow with
the local initial data for a definite period of time, and obtain a smoothing
metric $g\big(\varepsilon^2\big)$ which is regular; by the distance distortion estimate
Lemma~\ref{lem: dis_dis1}, we know that up to scale $\varepsilon$, the original
metric structure defined by $g$ is $\Psi(\alpha)$-Gromov--Hausdorff close to the
metric structure defined by $g(\varepsilon)$; therefore, since the domain
$(B_g(p,2),g)$ is $\delta$-Gromov--Hausdorff close to~$\mathbb{B}^k(1)$, we know
that the domain $\big(B_g\big(p,\frac{3}{2}\big),g\big(\varepsilon^2\big)\big)$ is
$\delta+\Psi(\alpha)\varepsilon$-Gromov--Hausdorff close to~$\mathbb{B}^k(1)$ on
scales up to $\varepsilon$; but then the regularity of the metric
$g\big(\varepsilon^2\big)$ allows us to appeal to the classical theory of collapsing
geometry \cite[Theorem~2.6]{CFG92} with bounded sectional curvature to obtain
the infranil fiber bundle structure over~$\mathbb{B}^k(1)$.

Here we would like to emphasize that the classical theorems (e.g.,
Theorems~\ref{thm: Fukaya1} and~\ref{thm: CFG}, as well as \cite[Theorem~2.6]{CFG92}) on collapsing with bounded sectional curvature essentially
describe a gap phenomenon, rooted back in Gromov's almost flat manifold theorem
(Theorem~\ref{thm: Gromov78}): when the manifold is \emph{sufficiently}
Gromov--Hausdorff close to a lower dimensional space, then the manifold itself
already acquires some non-trivial symmetry. Such a gap phenomenon allows us to
slightly perturb the given metric locally to one with much better regularity,
but remains to be sufficiently collapsed (in the metric sense) so that the
symmetry structure could still be observed.

The Ricci flow local smoothing results can also help with proving global results
when the collapsing limit is singular. In particular, we make the following
\begin{Conjecture}
Given $D\ge 1$, $m,l\in \mathbb{N}$ and $\iota>0$ there is an
$\varepsilon(m,l,\iota)>0$ such that if $(M,g)\in \mathcal{M}_{\rm Rc}(m,D)$ and an
$(l,\iota)$-controlled $k$-dimensional Riemannian orbifold $(X,d_X)$ satisfy the
conditions $d_{\rm GH}(M,X)<\varepsilon$ and $b_1(M)-b_1(X)=m-k$, then $M$ is a
torus bundle over $X$.
\end{Conjecture}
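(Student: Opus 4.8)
The plan is to run the program behind Theorem~\ref{thm: HW20a}, replacing the smooth base by the orbifold $X$ and inserting the orbifold refinements of the local Ricci flow smoothing results. Since the hypotheses are local and uniform, I would first localize. The $(l,\iota)$-control should mean that $X$ is covered by orbifold charts $\mathbb{B}^k(\iota)\to\mathbb{B}^k(\iota)\slash G_x$ with $|G_x|\le l$ and a definite lower bound on the chart scale; after fixing a $\Psi(\varepsilon)$-Gromov--Hausdorff approximation $F\colon M\to X$ and choosing $\varepsilon$ small, every $p\in M$ then has $B_g(p,r)$ that is $\Psi(\varepsilon)$-close to $\mathbb{B}^k(r)\slash G_{F(p)}$ up to a definite scale. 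Moreover, from the deck-group description of the orbifold charts together with the local topology recovered as in \cite{KW11, NaberZhang}, I expect to extract a surjective $\phi_p\in\operatorname{Hom}\big(\pi_1(B_g(p,r),p),G_{F(p)}\big)$. This puts us, apart from the nilpotency-rank condition, in the setting of Theorem~\ref{thm: HW20b1} (equivalently of Remark~\ref{rmk: orbifold_fibration}), so the local Ricci flow smoothing becomes available as soon as the rank is maximal.

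Next I would decode the Betti numbers. Following the $H_1^{\delta}$-bookkeeping in the proof of Theorem~\ref{thm: HW20a} and the Colding--Naber estimates \cite{CN12}, I expect $b_1(M)-b_1(X)=\rank H_1^{\delta}(M;\mathbb{Z})$ for $\varepsilon$ small, with $b_1(X)$ the first Betti number of the underlying topological space of $X$; and then, by the generalized Margulis lemma \cite{KW11} together with the natural orbifold analogue of the Naber--Zhang rank estimate for balls collapsing onto a $k$-dimensional model \cite{NaberZhang}, $\rank H_1^{\delta}(M;\mathbb{Z})\le\rank\tilde{\Gamma}_{\varepsilon(m)}(p)\le m-k$ for every $p$. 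The hypothesis $b_1(M)-b_1(X)=m-k$ then forces $\rank\tilde{\Gamma}_{\varepsilon(m)}(p)=m-k$, i.e.\ the maximal nilpotency rank at every point.

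With maximal rank in hand, two routes are available. Locally, Remark~\ref{rmk: orbifold_fibration} (the orbifold form of Theorem~\ref{thm: local_infranil}) yields, for each $p$, a neighborhood that is an infranil fiber bundle over the orbifold chart $\mathbb{B}^k(1)\slash G_{F(p)}$; over the regular part one may also appeal to Theorem~\ref{thm: HR20}. Alternatively one can patch the local Ricci flows of Theorem~\ref{thm: HW20b1} using Lemma~\ref{lem: HW20b0}, Theorem~\ref{thm: HW20b2} and Lemma~\ref{lem: dis_dis1} into a global metric $g(t)$ of bounded sectional curvature that still $\Psi(\varepsilon)$-collapses to $X$, and then invoke the classical collapsing theory over orbifold bases \cite{CFG92, Fukaya89} to obtain a singular infranil fibration $f\colon M\to X$. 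In either case the maximality of $b_1(M)-b_1(X)$ should be promoted, exactly as in Theorem~\ref{thm: HW20a}, to the statement that the infranil fibers are flat tori and that the structure group of $f$ reduces to a torus group, so that $M$ is a torus bundle over $X$.

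The main obstacle I foresee is global rather than local: assembling the local orbifold (infra)nil, then torus, fiber bundles into a single torus bundle over $X$ that is well behaved across the singular strata. Concretely one must (i) pin down the correct notion of $b_1(X)$ and verify $b_1(M)-b_1(X)=\rank H_1^{\delta}(M;\mathbb{Z})$ when $X$ genuinely has orbifold points -- the comparison of $\pi_1(M)$ with the orbifold base is more delicate than in the manifold case -- and (ii) check that the surjections $\phi_p$ onto the $G_x$ are compatible on overlaps, so that the local fiber bundles glue over the orbifold transition maps. The local smoothing and local fibration steps are essentially already supplied by Theorem~\ref{thm: HW20b1}, Theorem~\ref{thm: local_infranil} and Remark~\ref{rmk: orbifold_fibration}; the genuinely new work is the orbifold-topological bookkeeping near the singularities.
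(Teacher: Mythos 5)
The statement you are addressing is not proved in the paper at all: it is stated there as an open conjecture, so there is no proof to compare your proposal against, and your proposal does not close the points that make it conjectural. Two of the steps you only ``expect'' are genuine gaps rather than bookkeeping. First, the surjective homomorphism $\phi_p\in\operatorname{Hom}\big(\pi_1(B_g(p,r),p),G_{F(p)}\big)$ is a \emph{hypothesis} of Theorem~\ref{thm: HW20b1} and of Remark~\ref{rmk: orbifold_fibration}, not a consequence of Gromov--Hausdorff closeness to $\mathbb{B}^k(r)\slash G_{F(p)}$: metric closeness to a quotient carries no a priori information about the local fundamental group of $M$, and producing such surjections (compatibly on overlaps, as you yourself note) is precisely the topological input that has to be established rather than ``extracted as in \cite{KW11,NaberZhang}''. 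Second, the rank bound $\rank\tilde{\Gamma}_{\varepsilon(m)}(p)\le m-k$ used in the proof of Theorem~\ref{thm: HW20a} comes from the Naber--Zhang strengthening of Theorem~\ref{thm: KW11}, which requires $B_g(p,2)$ to be close to a genuine Euclidean ball $\mathbb{B}^k(2)$; near a singular point of $X$ the ball is only close to $\mathbb{B}^k(2)\slash G_x$, whose tangent cones are not $\mathbb{R}^k$, so the blow-up argument does not apply verbatim -- the paper itself flags the orbifold collapsing limit as ``considerably more difficult''. Likewise, the identity $b_1(M)-b_1(N)=\rank H_1^{\delta}(M;\mathbb{Z})$ was established in \cite{HW20a} for a smooth base with bounded geometry, and its orbifold analogue (with which notion of $b_1(X)$, and how classes of $H_1(X)$ lift to ``long'' classes in $M$ across the singular strata) is exactly part of what is unknown.

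So what you have written is a sensible research plan that correctly locates the difficulty, but as a proof it is circular at the decisive points: the local smoothing and fibration results you invoke take as hypotheses precisely the data (surjections onto the orbifold groups $G_x$, maximal nilpotency rank at every point including those over singular points of $X$) that the conjecture's assumptions are not known to supply, and the final gluing of the local fibrations into a single torus bundle across the singular strata is asserted, not carried out. Establishing these items is the content of the conjecture; none of it is available in the paper, which records the statement only as a conjecture following Theorem~\ref{thm: local_infranil}.
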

Here by saying the Riemannian orbifold $(X,d_X)$ is
$(l,\iota)$-\emph{controlled} we mean that for any $x\in X$,
$B_{d_X}(x,\iota)\equiv \mathbb{B}^k\slash G_x$ with the order
of the orbifold group $G_x<{\rm O}(k)$ bounded above by $l$.

\section{Collapsing Ricci-flat K\"ahler metrics with bounded curvature}\label{section5}
Combining the classical theory of collapsing geometry with bounded curvature
and the Ricci flow smoothing technique, we make some attempts to understand open Question~\ref{qst: question}.

If $\big(M,\bar{g},\bar{J}\big)$ is a closed Calabi--Yau manifold with $\bar{g}$ being a
Ricci-flat K\"ahler metric, then by the work of Dai, Wang and Wei \cite{DWW07},
there is a $C^k$-\emph{stability radius} (in the space of Riemannian metrics on
$M$) $\bar{\eta}_1>0$ determined by $\bar{g}$, such that the Ricci
flow initiated from any Riemannian metric $g\in
B_{C^k(M,\bar{g})}(\bar{g},\bar{\eta}_1)$ converges to another Ricci-flat
K\"ahler metric in $B_{C^k(M,\bar{g})}(\bar{g},\bar{\eta}_0)$, with
$\bar{\eta}_1<\bar{\eta}_0<1$. On the other hand, if $(M,\bar{g})$ is
sufficiently volume collapsing with bounded sectional curvature, i.e., the
volume $|M|_{\bar{g}}<\varepsilon(m)$ as in Theorem~\ref{thm: CFG}, then we are
able to perturb~$\bar{g}$ to some nearby metric which is invariant under the
extra symmetry that causes collapsing. If now $|M|_{\bar{g}}<\varepsilon$ with
$\varepsilon$ so small that the constant $\Psi_{\rm CFG}(\varepsilon|m)$ is small
enough to guarantee the convergence of the Ricci flow starting from the
approximating metric, then the flow will enable us to find a Ricci-flat
K\"ahler metric compatible with the symmetry. This is the content of the
following theorem.
\begin{Theorem}\label{thm: main}
Given a closed K\"ahler manifold $\big(M,\bar{g},\bar{J}\big)$ equipped with a
Calabi--Yau metric $\bar{g}$ such that
$\max_{\wedge^2TM} |\mathbf{K}_{\bar{g}} |\le 1$, there is a constant
$\beta(\bar{g})>0$ such that if $\beta(\bar{g})<1$, then
\begin{enumerate}\itemsep=0pt
 \item[$1)$] there is a Ricci-flat K\"ahler metric $g$ $($together with a compatible
 complex structure $J)$, such that $ \|g-\bar{g}
 \|_{C^k(M,\bar{g})} <\bar{\eta}_0$ for some $\bar{\eta}_0\in (0,1)$
 solely determined by $\bar{g}$;
 \item[$2)$] there are a Ricci-flat orbifold $X$ and a Riemannian submersion $f\colon M\to
 X$ with respect to $g$, such that the fibers are totally geodesic tori $($see
{\rm \cite[Section~7]{Fukaya89}} for related definitions$)$, and~$g$ is invariant under the
 trous action.
\end{enumerate}
\end{Theorem}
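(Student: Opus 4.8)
The plan is to combine three ingredients that have all been recalled above: the nilpotent Killing structure of Cheeger--Fukaya--Gromov (Theorem~\ref{thm: CFG}), the dynamical stability of Ricci-flat K\"ahler metrics under the Ricci flow (the result of Dai--Wang--Wei with its $C^k$-stability radius $\bar\eta_1$), and the local smoothing philosophy that the Ricci flow respects isometries. First I would fix the quantities: let $\bar\eta_1<\bar\eta_0<1$ be the stability radii attached to $\bar g$, and choose $\varepsilon=\varepsilon(\bar g)>0$ so small that $\varepsilon<\varepsilon(m)$ (the dimensional threshold of Theorem~\ref{thm: CFG}) and, in addition, $\Psi_{\rm CFG}(\varepsilon|m)<\bar\eta_1$. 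Then I would \emph{define} $\beta(\bar g):=|M|_{\bar g}/\varepsilon(\bar g)$ (or any monotone rescaling of the collapsed volume that is $<1$ precisely when $|M|_{\bar g}<\varepsilon(\bar g)$); the hypothesis $\beta(\bar g)<1$ thus means exactly that $\bar g$ is collapsed enough to enter the regime of Theorem~\ref{thm: CFG}. Note we are free to rescale $\bar g$ so that $\sup_M|\mathbf{K}_{\bar g}|\le 1$ already holds, which is assumed.

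Next, apply Theorem~\ref{thm: CFG} to $(M,\bar g)$: it produces a regular $(\rho,k)$-round metric $g_\varepsilon$ with $\|\bar g-g_\varepsilon\|_{C^1(M,\bar g)}<\Psi_{\rm CFG}(\varepsilon|m)$, and a nilpotent Killing structure $\mathfrak N$ of positive rank compatible with $g_\varepsilon$. Because the collapsing has bounded diameter, $\mathfrak N$ is \emph{pure} by Fukaya's theorem, so $\mathfrak N$ partitions $M$ into nilmanifold orbits forming a singular Riemannian foliation. Since $\|\bar g-g_\varepsilon\|_{C^1}<\Psi_{\rm CFG}(\varepsilon|m)<\bar\eta_1$, the metric $g_\varepsilon$ lies in the $C^k$-stability ball $B_{C^k(M,\bar g)}(\bar g,\bar\eta_1)$ -- here I would need to promote the $C^1$-closeness coming from Theorem~\ref{thm: CFG} to $C^k$-closeness; this is where one uses the regularity of the round metric $g_\varepsilon$ together with Shi-type interior estimates (running the Ricci flow briefly from $g_\varepsilon$, or invoking the uniform higher-derivative bounds built into the CFG construction) so as to land inside the stability radius $\bar\eta_1$ in the $C^k$-norm. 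Granting this, Dai--Wang--Wei gives a Ricci flow $g(t)$ with $g(0)=g_\varepsilon$ converging as $t\to\infty$ to a Ricci-flat K\"ahler metric $g=g_\infty$, together with a parallel complex structure $J$, and $\|g-\bar g\|_{C^k(M,\bar g)}<\bar\eta_0$. This proves item~(1).

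For item~(2), the key observation is that the Ricci flow respects local isometries: each Killing vector field of $g_\varepsilon$ tangent to an $\mathfrak N$-orbit remains a Killing vector field of every $g(t)$, hence of the limit $g$. Concretely, the $N_j$-actions on the local covers $F(U_j)$ act isometrically on $g_\varepsilon$, and since the Ricci flow is equivariant under isometries of the initial data, these actions stay isometric along the flow and in the limit; descending back to $M$, the $\mathfrak N$-orbits are $g$-Killing orbits. Thus $\mathfrak N$ is a nilpotent Killing structure compatible with the Ricci-flat K\"ahler metric $g$, and -- being pure, of positive rank, on a manifold of bounded diameter -- the orbits are totally geodesic tori: purity gives the common nilmanifold $N$, while the Ricci-flatness combined with the Killing and submersion structure forces the fibers to be flat tori (a Bochner-type argument on the leaves, or directly from the O'Neill formulas since the base is an orbifold of nonnegative and hence, by Ricci-flatness of the total space, Ricci-flat geometry). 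The quotient $X=M/\mathfrak N$ is then a Ricci-flat Riemannian orbifold and $f\colon M\to X$ is a Riemannian submersion with totally geodesic torus fibers, invariant under the torus action, as claimed.

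The main obstacle I anticipate is the \textbf{$C^1$-to-$C^k$ upgrade} needed to feed $g_\varepsilon$ into the Dai--Wang--Wei stability machinery: Theorem~\ref{thm: CFG} only guarantees $C^1$-closeness of $g_\varepsilon$ to $\bar g$, whereas dynamical stability is formulated with respect to a higher $C^k$-norm. Resolving this requires either extracting uniform higher-order control of $g_\varepsilon$ from the CFG construction (the round metric is regular with curvature and covariant-derivative bounds at the collapsed scale), or a short preliminary Ricci flow to smooth $g_\varepsilon$ while keeping it $C^k$-close to $\bar g$ and compatible with $\mathfrak N$ -- the compatibility being preserved precisely by the isometry-equivariance of the flow. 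A secondary technical point is ensuring the stability radius $\bar\eta_1$ (which depends on $\bar g$) is compatible with the smallness threshold $\varepsilon(\bar g)$ defining $\beta(\bar g)$; this is handled by simply choosing $\varepsilon(\bar g)$ after $\bar\eta_1$ so that $\Psi_{\rm CFG}(\varepsilon(\bar g)|m)<\bar\eta_1$, which is possible since $\Psi_{\rm CFG}(\varepsilon|m)\to 0$ as $\varepsilon\to 0$.
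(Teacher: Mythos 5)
Your outline for part~(1) follows the same route as the paper (Cheeger--Fukaya--Gromov approximation, then Ricci flow into the Dai--Wang--Wei stability basin, with isometry-equivariance of the flow preserving the nilpotent Killing structure), but the step you flag as the ``main obstacle'' is left genuinely unproven, and it is the core technical content here. The assertion that $\|\bar g-g_\varepsilon\|_{C^1}<\bar\eta_1$ puts $g_\varepsilon$ in the \emph{$C^k$}-stability ball is false as written, and neither of your proposed fixes closes the gap as stated: Shi's estimates for a brief flow from $g_\varepsilon$ control $\|\nabla^l\Rm_{g(t)}\|$ but say nothing about closeness of $g(t)$ to the \emph{fixed} metric $\bar g$ in $C^k$, and interpolating uniform higher-derivative bounds from the CFG construction against the $C^1$-closeness is problematic because the manifold is collapsed, so there is no uniform coordinate or harmonic-radius scale in which to compare with $\bar g$. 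What is actually needed is a $C^0$-to-$C^k$ stability improvement: the paper proves (Theorem~\ref{thm: main2}) that any metric $C^0$-close to $\bar g$ flows in a short definite time into $B_{C^k(M,\bar g)}(\bar g,\bar\eta_1)$, and this requires a nontrivial contradiction/rescaling argument combining the $\alpha t^{-1}$ curvature bound, the distance-distortion estimate of Lemma~\ref{lem: dis_dis}, and Cheeger--Gromov compactness in the two regimes $\varepsilon_i\to\varepsilon>0$ and $\varepsilon_i\to 0$. Your proposal gestures at this but supplies no argument for why the evolved metric stays $C^k$-close to $\bar g$.

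Part~(2) of your proposal is essentially a restatement of the conclusion rather than a proof. Three things must be established and are missing. First, the orbit space of the pure nilpotent Killing structure is a priori a singular space with \emph{corner points} (positive-dimensional isotropy), not an orbifold; the paper rules these out by introducing the central $F$-structure $\mathcal F$ inside $\mathcal N$, its central density $\det G$ (which vanishes exactly on the preimage of the corner set), and running a maximum-principle argument on the traced O'Neill formula for the Ricci-flat metric to force $\det G$ to be a positive constant. Second, the fibers of a pure nilpotent Killing structure are nilmanifolds, not tori; to get tori one needs the further Bochner argument in which $G$ is viewed as a map into the nonpositively curved symmetric space ${\rm SL}(k',\mathbb{R})/{\rm SO}(k')$, yielding $\nabla^{\perp}G\equiv 0$, hence totally geodesic central leaves and a local splitting, and then the Lie-algebra step (a nontrivial nilpotent algebra has nontrivial center) forcing $\mathcal N=\mathcal F$, i.e.\ abelian fibers. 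Third, the Ricci-flatness of the base only falls out of O'Neill's formula \cite{ONeill} \emph{after} one knows $|A|\equiv 0$ and $\nabla^{\perp}G\equiv 0$; your claim that the base has ``nonnegative and hence Ricci-flat geometry'' is not supported by anything you wrote. Without these steps the statement that $X=M/\mathfrak N$ is a Ricci-flat orbifold with totally geodesic torus fibers is an assertion, not a consequence of your argument.
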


Here we have $\beta(\bar{g}):=|M|_{\bar{g}}\bar{\eta}_3^{-1}$, with
$\bar{\eta}_3$ to be determined as following: notice that Cheeger, Fukaya and
Gromov's approximating metric is only in a $C^1$ neighborhood of $\bar{g}$, but
Dai, Wang and Wei's stability result requires much higher regularity for the
neighborhood; therefore we develop a regularity improvement tool
(Theorem~\ref{thm: main2}), which finds a $C^0$ neighborhood of $\bar{g}$,
denoted by $B_{C^0(M,\bar{g})}(\bar{g}, \bar{\eta}_2)$, where the Ricci flow
exists forever and converges to a Ricci-flat K\"ahler metric; here
$\bar{\eta}_3$ is defined so that if $|M|_{\bar{g}}<\bar{\eta}_3$, then
$\Psi_{\rm CFG}(\bar{\eta}_3|m)\le \frac{1}{2}\bar{\eta}_2$.

While the assumption on the volume collapsing is rather strong, in that
$\beta(\bar{g})$ depends on the specific K\"ahler manifold, the
existence of an invariant critical metric drastically reduces the topological
complexity of the manifold: it is a torus bundle over a Ricci-flat orbifold. The
invariant metric allows us to apply the O'Neill's formula~\cite{ONeill},
together with the central density (see \cite[Section~5]{Foxy1908}) of the collapsing
structure to rule out the so-called \emph{corner singularities} of the
collapsing limit space, and following the arguments in~\cite{Lott10, NT18}
we can show that the collapsing fibers must be tori and the fibration must
locally be a Riemannian product, implying the Ricci-flatness of the collapsing
limit.

\subsection{Existence of invariant Ricci-flat K\"ahler metric}\label{section5.1}
In this sub-section we prove the first claim in Theorem~\ref{thm: main}.
By the stability result \cite{DWW07} of Dai, Wang and Wei for Ricci-flat
K\"ahler metrics, we know that for the Calabi--Yau manifold $\big(M,\bar{g},
\bar{J}\big)$, there are some positive constants $\bar{\eta}_1
<\bar{\eta}_0<1$, both determined by $\bar{g}$, such that if $g$ is
another smooth Riemannian metric with $ \|g-
\bar{g} \|_{C^k(M,\bar{g})} <\bar{\eta}_1$, then the
Ricci flow with initial data $g$ exists for all time and converges to a
Ricci-flat K\"ahler metric in $B_{C^k(M,\bar{g})}(\bar{g},
\bar{\eta}_0)$. Here $k:=\big\lceil \bar{\eta}_1^{-1}
\big\rceil$ is solely determined by $\bar{g}$. Since $k$ may be a very large
number, our first priority is to prove the following regularity improvement
result.
\begin{Theorem}\label{thm: main2}
There is a constant $\bar{\eta}_2\in (0,\bar{\eta}_1)$
determined by $\bar{g}$, such that if $\|g-\bar{g}
\|_{C^0(M,\bar{g})} <\bar{\eta}_2$, then the Ricci
flow starting from $g$ exists for all time and converges to a Ricci-flat
K\"ahler metric in $B_{C^{k}(M,\bar{g})}(\bar{g},\bar{\eta}_0)$.
\end{Theorem}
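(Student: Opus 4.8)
The plan is to close the regularity gap between the $C^0$-smallness hypothesis and the $C^k$-smallness required by the Dai--Wang--Wei stability result \cite{DWW07} by using the smoothing effect of the Ricci flow itself. First I would observe that since $\bar g$ is a fixed smooth metric on the closed manifold $M$, the condition $\|g-\bar g\|_{C^0(M,\bar g)}<\bar\eta_2$ for $\bar\eta_2$ small forces $g$ to be uniformly equivalent to $\bar g$, hence $(M,g)$ has bounded geometry at time $0$ only in the $C^0$ sense --- but one still gets a definite injectivity radius lower bound and two-sided volume bounds on all unit balls. The subtlety is that without curvature control at $t=0$ one cannot directly invoke Shi's estimates \eqref{eqn: Shi_estimate}; however, a short-time existence theorem for Ricci flow with merely continuous (uniformly equivalent) initial metric on a closed manifold is available (e.g., by Koch--Lamm-type analysis, or by first mollifying and passing to a limit), and, more to the point, I would use \emph{pseudo-locality}: the $C^0$-closeness to the smooth metric $\bar g$ gives, at every point $p\in M$, that $B_{g}(p,1)$ is $\Psi(\bar\eta_2)$-close to a unit Euclidean ball with almost-Euclidean volume and scalar curvature nearly bounded below (since these are continuous-metric-stable up to the distortion controlled by $\bar\eta_2$). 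Perelman's pseudo-locality theorem (condition (2) of its statement in the excerpt) then yields a uniform time $\tau_0>0$ and a curvature bound $|\Rm_{g(t)}|\le \alpha t^{-1}+\tau_0^{-2}$ on $(0,\tau_0]$.

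Next I would run Shi's higher-derivative estimates \eqref{eqn: Shi_estimate} on the time interval $(0,\tau_0]$ starting from the curvature bound just obtained: at time, say, $t_1:=\tfrac12\tau_0$ one gets $\sup_M|\nabla^l\Rm_{g(t_1)}|\le C_l(m,\tau_0)$ for all $l\le k+2$. The key point is that these bounds depend only on $m$ and $\tau_0$ (hence only on $\bar g$ through $\bar\eta_2$), not on any a priori smoothness of $g$. Simultaneously I would use a distance-distortion estimate of the type in Lemma~\ref{lem: dis_dis1} (or the cruder fact that for $t\le\tau_0$ the metrics $g(t)$ and $g$ are mutually $e^{\Psi(\alpha)}$-bi-Lipschitz on the relevant scale, since the Ricci curvature bound $|\Rc_{g(t)}|\le\alpha t^{-1}$ integrates to a $t$-independent distortion on a closed manifold) to conclude that $g(t_1)$ is still $C^0$-close to $g$, hence $C^0$-close to $\bar g$. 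Combining the $C^0$-closeness of $g(t_1)$ to $\bar g$ with the now-uniform $C^{k+2}$ bounds on $g(t_1)$ and interpolation inequalities (Gagliardo--Nirenberg between $C^0$ and $C^{k+2}$) upgrades $C^0$-closeness to $C^k$-closeness: by choosing $\bar\eta_2$ small enough relative to the interpolation constants and the fixed bounds $C_l(m,\tau_0)$, one obtains $\|g(t_1)-\bar g\|_{C^k(M,\bar g)}<\bar\eta_1$.

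With this in hand, I would apply the Dai--Wang--Wei stability theorem to the metric $g(t_1)$: since $g(t_1)$ lies in $B_{C^k(M,\bar g)}(\bar g,\bar\eta_1)$, the Ricci flow restarted from $g(t_1)$ exists for all time and converges to a Ricci-flat K\"ahler metric in $B_{C^k(M,\bar g)}(\bar g,\bar\eta_0)$. But by uniqueness of the Ricci flow on a closed manifold, the flow restarted from $g(t_1)$ is simply the time-translate of the original flow $g(t)$; therefore the original flow $g(t)$, $t\ge0$, exists for all time and converges to the same Ricci-flat K\"ahler limit in $B_{C^k(M,\bar g)}(\bar g,\bar\eta_0)$, which is exactly the assertion of Theorem~\ref{thm: main2}. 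Finally I would set $\bar\eta_2$ to be the minimum of the thresholds arising in (i) the pseudo-locality application, (ii) the distance-distortion/bi-Lipschitz estimate, and (iii) the interpolation step, each of which depends only on $m$, on $\bar g$ (through its fixed geometry), and on the fixed constant $\bar\eta_1$.

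The step I expect to be the main obstacle is making the jump from a \emph{merely $C^0$} initial metric to a well-posed Ricci flow with the pseudo-locality hypotheses verified: a priori one has no curvature bound at $t=0$, so one must either appeal to an existence theory for Ricci flow from rough initial data or regularize first and track constants carefully, and one must check that the scalar-curvature-lower-bound and volume-ratio hypotheses of pseudo-locality survive the $C^0$-perturbation (they do, since both are controlled by $\|g-\bar g\|_{C^0}$ together with the fixed smooth geometry of $\bar g$, but this requires a small argument because scalar curvature involves two derivatives of the metric). Once the flow exists on a definite interval with the pseudo-locality curvature bound, the remaining steps --- Shi smoothing, bounded distance distortion, interpolation, and invoking \cite{DWW07} --- are routine given the results already stated in the excerpt.
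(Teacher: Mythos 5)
Your plan has two genuine gaps, both at steps the paper is specifically structured to avoid. First, the pseudo-locality step does not get off the ground: neither hypothesis of Perelman's theorem is verifiable from $\|g-\bar g\|_{C^0(M,\bar g)}<\bar\eta_2$ alone. The scalar (or Ricci) curvature of $g$ involves two derivatives of the metric and is \emph{not} controlled by $C^0$-closeness to $\bar g$ -- a $C^0$-tiny but oscillatory perturbation can have scalar curvature unbounded below -- and, worse, the volume hypothesis $|B_{g(0)}(p,1)|\ge(1-\delta_P)\omega_m$ fails outright in the intended application, since in Theorem~\ref{thm: main} the metric $\bar g$ is volume-collapsed, so unit balls of $\bar g$ (hence of $g$) are nowhere near Euclidean; this is exactly the obstruction illustrated by Topping's example in Section~\ref{section4.2}. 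Consequently you do not obtain a uniform time $\tau_0$ determined by $\bar g$ on which $|\Rm_{g(t)}|\le\alpha t^{-1}+\tau_0^{-2}$, and the subsequent Shi estimates with constants ``depending only on $m$ and $\tau_0$'' have no foundation. Second, even granting a curvature bound $|\Rc_{g(t)}|\le\alpha t^{-1}$ on some interval, your passage to tensor-level $C^0$-closeness of $g(t_1)$ to $g$ is unjustified: $\int_0^{t_1}\alpha t^{-1}\,{\rm d}t$ diverges, so the Ricci bound does \emph{not} integrate to a $t$-independent bi-Lipschitz bound between $g(t_1)$ and $g(0)$, and Lemma~\ref{lem: dis_dis1} only gives closeness of \emph{distances} at scale $\sqrt t$ (Gromov--Hausdorff closeness), not smallness of $\|g(t_1)-\bar g\|_{C^0}$ as tensors, which is what your interpolation step requires.

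For comparison, the paper assumes $g$ is a smooth metric on the closed $M$ (it is applied to the smooth Cheeger--Fukaya--Gromov approximating metric), so short-time existence is free and, for each fixed $g$, the bound $\|\Rm_{g(t)}\|_{C^0}\le\alpha t^{-1}$ holds on some interval $(0,\varepsilon^2]$ with $\varepsilon$ allowed to depend on $g$ -- no uniform existence time is ever needed. The heart of the proof is then a compactness/contradiction claim: if metrics $g_i$ with $\|g_i-\bar g\|_{C^0}\to0$ and curvature bounds $\alpha_i t^{-1}$ on $(0,\varepsilon_i^2]$ had $\|g_i(\varepsilon_i^2)-\bar g\|_{C^k}\ge\bar\eta_1$, one rescales $\varepsilon_i\mapsto2$, uses Shi's estimates at the rescaled time $1$ together with the distance-distortion lemma to get Gromov--Hausdorff closeness realized by the identity map, upgrades to Cheeger--Gromov convergence by the regularity of the evolved metrics, and derives a contradiction in both regimes $\varepsilon_i\to\varepsilon>0$ and $\varepsilon_i\to0$ (the latter blow-up limit being Euclidean space). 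This rescaling argument is precisely what replaces your pseudo-locality plus interpolation scheme and circumvents both gaps; after the claim, the conclusion follows from Dai--Wang--Wei applied to $g(\varepsilon^2)$, as in your final step.
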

Before proving Theorem~\ref{thm: main2}, we state the following slight variant
of Lemma~\ref{lem: dis_dis1}:
 \begin{Lemma} \label{lem: dis_dis}
 There is a positive constant $\bar{\eta}_2'<1$ determined by $\bar{g}$ such
 that if $g\in B_{C^0(M,\bar{g})}(\bar{g},\bar{\eta}_2')$, and the Ricci flow
 $g(t)$ with initial data $g$ satisfies $\max_M\left|\Rc_{g(t)} \right|_{g(t)}
 <\alpha t^{-1}$ for $t\in [0,T)$ and some $\alpha\in (0,1)$, then there is a
 $\Psi_D'(\alpha|\bar{g})>0$ with $\lim\limits_{\alpha\to 0}\Psi_D'(\alpha|\bar{g})=0$
 such that for any $x,y\in M$ with $d_g(x,y)\le \sqrt{t}$, we have
 \begin{align*}
 |d_g(x,y)-d_{g(t)}(x,y) | \le \Psi_D'(\alpha|\bar{g})\sqrt{t}.
 \end{align*}
\end{Lemma}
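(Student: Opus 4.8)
The plan is to deduce this from the distance distortion estimate for non-collapsing Ricci flow initial data, \cite[Lemma~1.11]{HKRX18} (whose proof is recalled in \cite[Lemma~4.1]{HW20b}), the point being that, although $(M,\bar g)$ is globally very collapsed, every metric $g$ which is $C^0$-close to $\bar g$ is automatically non-collapsed \emph{on a fixed small scale} $\rho_0(\bar g)>0$; and such a scale-$\rho_0$ non-collapsing, together with the ambient hypothesis $\max_M|\Rc_{g(t)}|_{g(t)}<\alpha t^{-1}$, is all that the cited argument consumes at the initial time.

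\textbf{Step 1 (a scale from $\bar g$).} Since $(M,\bar g)$ is a fixed closed manifold with $\max_{\wedge^2TM}|\mathbf{K}_{\bar g}|\le 1$, it has positive injectivity radius, so put $\rho_0:=\rho_0(\bar g):=\frac{1}{10}\min\{1,\operatorname{inj}(M,\bar g)\}$. Inside such a radius, bounded curvature together with the lower injectivity-radius bound yields a \emph{dimensional} constant $v(m)>0$ with $|B_{\bar g}(x,r)|_{\bar g}\ge v(m)r^m$ for all $x\in M$ and $r\le\rho_0$, and a dimensional bound $C(m)$ on the volume-doubling and $(1,1)$-Poincar\'e constants of the balls $B_{\bar g}(x,\rho_0)$ (Bishop--Gromov and Buser's inequality). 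Now choose $\bar\eta_2'=\bar\eta_2'(\bar g)\in(0,1)$ so small that $\|g-\bar g\|_{C^0(M,\bar g)}<\bar\eta_2'$ forces $g$ and $\bar g$ to be $2$-bi-Lipschitz; for such $g$ the metric balls of $g$ and $\bar g$ are uniformly comparable, so $g$ is non-collapsed at scale $\frac12\rho_0$ with a dimensional constant and has dimensionally bounded doubling and Poincar\'e constants on the balls $B_g(x,\frac12\rho_0)$. The only dependence on $\bar g$ in these bounds is through the scale $\rho_0(\bar g)$.

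\textbf{Step 2 (rescale and invoke).} Put $\lambda:=\rho_0(\bar g)$ and consider the parabolically rescaled flow $\hat g(t):=\lambda^{-2}g(\lambda^2t)$, a Ricci flow with $\max_M|\Rc_{\hat g(t)}|_{\hat g(t)}<\alpha t^{-1}$ on $[0,\lambda^{-2}T)$ whose initial slice $\hat g$ is non-collapsed at scale $\frac12$ with dimensional doubling and Poincar\'e constants. These are exactly the hypotheses of \cite[Lemma~1.11]{HKRX18}: there is a $\Psi(\alpha|m)>0$ with $\lim_{\alpha\to 0}\Psi(\alpha|m)=0$ such that if $0<t\le t_0(m,\alpha)$ and $d_{\hat g}(x,y)\le\sqrt t$ then $|d_{\hat g}(x,y)-d_{\hat g(t)}(x,y)|\le\Psi(\alpha|m)\sqrt t$. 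Since the rescaling leaves this inequality invariant, it translates, for $0<s\le\min\{T,\rho_0(\bar g)^2t_0(m,\alpha)\}$ and $d_g(x,y)\le\sqrt s$, into $|d_g(x,y)-d_{g(s)}(x,y)|\le\Psi(\alpha|m)\sqrt s$; one sets $\Psi_D'(\alpha|\bar g):=\Psi(\alpha|m)$, the dependence on $\bar g$ surviving only in the admissible range of times, which is harmless for the use of this lemma in the proof of Theorem~\ref{thm: main2}, where only a short-time comparison is needed.

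\textbf{The main obstacle}, which is already the heart of \cite{HKRX18,HW20b}, is controlling the distance function as $t\searrow 0$. The pointwise bound $|\Rc_{g(t)}|_{g(t)}\le\alpha t^{-1}$ by itself only gives a \emph{multiplicative} comparison $d_{g(t)}\le(t/\tau)^{\alpha}d_{g(\tau)}$ between two positive time slices, which degenerates as $\tau\to 0$, while Perelman-type estimates for $\partial_t d_{g(t)}$ leave a residual term that does not vanish as $\alpha\to 0$; neither suffices to compare $d_{g(t)}$ with $d_{g(0)}$ to error $o(\sqrt t)$. The remedy, which I would follow verbatim from the cited works, is to feed the scale-$\rho_0(\bar g)$ non-collapsing of $g$ into the local-entropy monotonicity of \cite{Wang18}, propagate it along the flow, and thereby obtain two-sided Gaussian-type heat-kernel bounds on the parabolic $\sqrt t$-scale, from which the time-$t$ and time-$0$ distances can be compared with the asserted $o(\sqrt t)$ error uniformly down to $t=0$. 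The sole role of the $C^0$-closeness to $\bar g$ is to supply the fixed scale $\rho_0(\bar g)$ below which $(M,g)$ still looks non-collapsed even though $M$ is globally collapsed; verifying that the bounds of Step~1 are precisely those consumed by the argument of \cite[Lemma~1.11]{HKRX18} is routine, and I would not reproduce it.
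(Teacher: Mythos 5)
Your key observation -- that $C^0$-closeness to the \emph{fixed} metric $\bar g$ supplies non-collapsing (indeed, almost Euclidean local geometry) at a fixed scale $\rho_0(\bar g)>0$, even though $(M,\bar g)$ is globally collapsed -- is the same insight the paper exploits; the paper, however, does not reduce to Lemma~\ref{lem: dis_dis1} as a black box, but re-runs the contradiction/rescaling argument of \cite[Lemma~2.10]{CRX19}, the dependence on $\bar g$ entering only through the fact that the rescaling limit is then exactly $\mathbb{R}^n$. The concrete gap in your reduction is the time range. After rescaling by $\rho_0(\bar g)$ you obtain the estimate only for $t\le \rho_0(\bar g)^2\,t_0(m,\alpha)$, where $t_0(m,\alpha)$ is the time threshold of the non-collapsed estimate (in the paper's formulation, Lemma~\ref{lem: dis_dis1}, it is $\varepsilon_P^2(m,\alpha)$ from pseudo-locality), and this threshold degenerates as $\alpha\to 0$ at a rate that is not explicit, since those constants come from compactness arguments. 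The lemma is asserted for all $t\in(0,T)$, and -- contrary to your claim that the restriction is harmless -- its use in the proof of Theorem~\ref{thm: main2} is inside a contradiction argument with $\alpha_i\to 0$ where the lemma is applied at $t=\varepsilon_i^2$ and $\varepsilon_i$ may converge to a \emph{positive} limit; in that branch your admissible window $\rho_0^2\,t_0(m,\alpha_i)$ shrinks to $0$ and eventually excludes $\varepsilon_i^2$. The obvious patch -- apply your estimate at $\tau=\rho_0^2 t_0(m,\alpha)$ and propagate to $t$ using $|\Rc_{g(s)}|_{g(s)}\le\alpha s^{-1}$, which only yields the multiplicative distortion $(t/\tau)^{\alpha}$ -- is not routine, because one cannot show $(t/\tau)^{\alpha}\to 1$ as $\alpha\to 0$ without a quantitative lower bound on $t_0(m,\alpha)$, which is unavailable.

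A secondary problem is your assertion that scale-$\rho_0$ non-collapsing plus doubling and Poincar\'e bounds ``are exactly the hypotheses'' consumed by the cited estimate. As used in this paper, the $\Psi\sqrt t$-additive estimate (Lemma~\ref{lem: dis_dis1}) is stated under the hypotheses of Theorem~\ref{thm: HW20b1} or Theorem~\ref{thm: HKRX18b}, i.e., Reifenberg-type or maximal-rank conditions with a closeness threshold $\delta(m,\alpha,\cdot)$ that shrinks with $\alpha$; a fixed $C^0$-perturbation of $\bar g$ of size $\bar\eta_2'$ is only about $(C\bar\eta_2'+\Psi(r))$-close to Euclidean at scale $r$, so an $\alpha$-dependent almost-Euclidean threshold cannot be met uniformly in $\alpha\in(0,1)$ with $\bar\eta_2'$ depending on $\bar g$ alone. (Doubling and Poincar\'e bounds are the hypotheses of Theorem~\ref{thm: HW20b2}, which gives only the H\"older-type distortion bound, not the $o(\sqrt t)$ bound with constant tending to $0$ as $\alpha\to 0$.) This mismatch is precisely why the paper proves the present lemma by repeating the contradiction argument of \cite[Lemma~2.10]{CRX19} with constants depending on $\bar g$, rather than quoting the non-collapsed lemma off the shelf; to make your route work you would need to isolate a version of the non-collapsed distance estimate whose initial-time hypotheses and time range are uniform in $\alpha$, and you have not done so.
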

The proof of this lemma is the same as that of \cite[Lemma~2.10]{CRX19}: in
the contradiction argument involved, the rescaling limit will be exactly
$\mathbb{R}^n$ as the quantity $\Psi_D'(\alpha|\bar{g})$ depends on~$\bar{g}$.
With such distance distortion estimate at our disposal, we now prove the $C^0$
stability around a~stable Ricci-flat K\"ahler metric.
 \begin{proof}[Proof of Theorem~\ref{thm: main2}]
 We notice that since $M$ is a closed manifold, for
 any smooth Riemannian metric $g$ there is a Ricci flow solution with initial
 data~$g$. Moreover, for any $\alpha>0$, there is alway some $\varepsilon>0$
 (depending on $\alpha$ and $(M,g)$) such that the following curvature bound is
 valid for $t\in \big(0,\varepsilon^2\big]$:
\begin{align*}
 \|\Rm_{g(t)} \|_{C^0(M,g(t))} \le \alpha t^{-1}.
\end{align*}
We now make the following claim regarding the Ricci flow solution:
\begin{Claim}
There are some $\alpha \in (0,1)$ and $\delta\in (0,\bar{\eta}_2')$ depending on
$(M,\bar{g})$ such that if
\begin{align*} \|g-\bar{g} \|_{C^0(M,\bar{g})} < \delta\qquad
\text{and}\qquad \forall\, t\le
\varepsilon^2,\quad \|\Rm_{g(t)} \|_{C^0(M,g(t))} \le \alpha t^{-1},
\end{align*}
then $\big\|g\big(\varepsilon^2\big)-\bar{g}\big\|_{C^k(M,\bar{g})}<\bar{\eta}_1$.
\end{Claim}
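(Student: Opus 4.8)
The plan is to prove the Claim by a contradiction/compactness argument, exactly in the spirit of the distance distortion lemmas quoted earlier. First I would fix the stability radius $\bar\eta_1$ and the exponent $k=\lceil\bar\eta_1^{-1}\rceil$ coming from \cite{DWW07}, and recall that the target Ricci-flat K\"ahler metric $\bar g$ is smooth, so all its geometric quantities are bounded and its harmonic radius at every point is bounded below. The statement to be shown is that running the Ricci flow for a \emph{short} definite time $\varepsilon^2$ from a $C^0$-close initial metric already lands inside the $C^k$-ball of radius $\bar\eta_1$ around $\bar g$; the point is that the short-time flow is a smoothing operator that upgrades $C^0$-closeness to $C^k$-closeness, while the curvature hypothesis $\|\Rm_{g(t)}\|_{C^0}\le\alpha t^{-1}$ keeps the flow from moving the metric too far in the metric (Gromov--Hausdorff) sense. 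So the proof naturally splits into two halves: (i) a \emph{metric stability} half, showing $(M,g(\varepsilon^2))$ is Gromov--Hausdorff (indeed bi-H\"older, via Lemma~\ref{lem: dis_dis}) very close to $(M,\bar g)$; and (ii) a \emph{regularity} half, converting that closeness plus the Shi-type estimate~(\ref{eqn: Shi_estimate}) into $C^k$-closeness to $\bar g$.

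For step (i), I would argue by contradiction: suppose there are sequences $\alpha_i\to 0$, $\delta_i\to 0$, metrics $g_i$ with $\|g_i-\bar g\|_{C^0}<\delta_i$ and Ricci flows $g_i(t)$ satisfying $\|\Rm_{g_i(t)}\|_{C^0}\le\alpha_i t^{-1}$ on $(0,\varepsilon_i^2]$, yet $\|g_i(\varepsilon_i^2)-\bar g\|_{C^k}\ge\bar\eta_1$ for every admissible $\varepsilon_i$. Since $g_i\to\bar g$ in $C^0$, the initial doubling and Poincar\'e constants converge to those of $\bar g$, so a uniform bound $\bar C_0$ holds; Lemma~\ref{lem: dis_dis} (equivalently the rescaled argument of \cite[Lemma~2.10]{CRX19}, whose blow-up limit is exactly $\mathbb R^n$ because $\bar g$ is smooth) then gives $|d_{g_i}(x,y)-d_{g_i(t)}(x,y)|\le\Psi_D'(\alpha_i|\bar g)\sqrt t$ for $d_{g_i}(x,y)\le\sqrt t$, so $d_{g_i(t)}\to d_{\bar g}$ uniformly as $i\to\infty$ for any fixed small $t$. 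Thus $(M,g_i(t_i))\xrightarrow{\mathrm{GH}}(M,\bar g)$ for any choice $t_i\to 0$ slowly.

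For step (ii), I would use Shi's estimate~(\ref{eqn: Shi_estimate}) on the flow: the bound $|\Rm_{g_i(t)}|\le\alpha_i t^{-1}$ together with the local existence from Lemma~\ref{lem: HW20b0} (or directly the compactness of $M$) gives $|\nabla^l\Rm_{g_i(t)}|\le C_l t^{-1-l}$, so for $t$ in a fixed window $[\tau,\varepsilon^2]$ with $\tau>0$ the metrics $g_i(t)$ have uniformly bounded geometry of all orders; by Hamilton--Cheeger--Gromov compactness a subsequence converges in $C^\infty_{\mathrm{loc}}$ to a limit Ricci flow $g_\infty(t)$ whose underlying metric space at each time is $(M,\bar g)$ by step (i). Since $\alpha_i\to 0$, the limit flow is flat in the sense $\Rm\equiv 0$, hence stationary, and being isometric to $(M,\bar g)$ forces $g_\infty(t)\equiv\bar g$ (here one uses that the identity component of the isometry group acts, and that a Ricci flow whose metric space is constantly $(M,\bar g)$ with $\bar g$ smooth must be $\bar g$ itself). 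Therefore $g_i(\varepsilon_i^2)\to\bar g$ in $C^\infty$, in particular in $C^k$, so $\|g_i(\varepsilon_i^2)-\bar g\|_{C^k}<\bar\eta_1$ for large $i$ — contradicting the assumption. This yields the desired $\alpha,\delta$; after fixing them, $\delta<\bar\eta_2'$ is arranged by shrinking, and we set $\bar\eta_2:=\delta$.

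The main obstacle I anticipate is the passage from Gromov--Hausdorff closeness back to the \emph{fixed smooth manifold} $(M,\bar g)$ at the \emph{initial} time $t=0$ of the limit flow: one must ensure the $C^\infty_{\mathrm{loc}}$ limit flow $g_\infty(t)$ extends smoothly down to $t=0$ with $g_\infty(0)=\bar g$, rather than merely converging to $\bar g$ as $t\searrow 0$ in a weaker sense. This is where the uniform $C^0$ control $\|g_i-\bar g\|_{C^0}<\delta_i$ on the \emph{initial} data is essential: combined with the distance distortion estimate it pins the limit's metric structure at every positive time to $(M,\bar g)$, and then uniqueness of Ricci flow with bounded curvature (the curvature bound $|\Rm|\le\alpha t^{-1}$ is the hypothesis needed for Chen--Zhu type uniqueness once the flow is shown to emanate continuously from $\bar g$) closes the gap. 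A secondary technical point, easily handled, is keeping the various radii commensurate: $\delta<\bar\eta_2'$ so that Lemma~\ref{lem: dis_dis} applies, and $\bar\eta_1$ small enough that $\bar\eta_0<1$, so that the subsequent appeal to \cite{DWW07} — giving eternal existence and convergence of the flow from $g(\varepsilon^2)$ to a Ricci-flat K\"ahler metric inside $B_{C^k(M,\bar g)}(\bar g,\bar\eta_0)$ — is legitimate, completing the proof of Theorem~\ref{thm: main2} once the Claim is established.
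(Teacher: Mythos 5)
Your overall skeleton (contradiction with $\alpha_i,\delta_i\to 0$, the distance distortion estimate of Lemma~\ref{lem: dis_dis} to pin down the metric structure, Shi's estimates plus compactness to upgrade to smooth convergence) is the same as the paper's, but there is a genuine gap: in step (ii) you implicitly treat $\varepsilon$ as a fixed positive scale. In the Claim, $\varepsilon$ is just the time up to which the curvature hypothesis $\|\Rm_{g(t)}\|\le\alpha t^{-1}$ happens to hold, and it depends on the varying initial metric $g$; in the contradiction argument one therefore has a sequence $\varepsilon_i$ which may (and, when $\bar g$ is not flat, essentially must) tend to $0$: at a fixed positive time $t$ the hypothesis gives $\|\Rm_{g_i(t)}\|\le\alpha_i t^{-1}\to 0$, which is also why your step (ii) is internally inconsistent -- you conclude simultaneously that the limit flow is flat and that it equals $\bar g$, impossible for a general non-flat Ricci-flat K\"ahler metric (e.g., on a K3 surface). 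When $\varepsilon_i\to 0$ your ``fixed window $[\tau,\varepsilon^2]$'' does not exist: the curvature control lives only on the shrinking interval $(0,\varepsilon_i^2]$, so neither Hamilton--Cheeger--Gromov compactness at a fixed positive time nor the Chen--Zhu-type uniqueness you invoke (which is not needed at all) can be applied. The paper closes exactly this gap by parabolically rescaling the flows and the background metric ($\bar h_i:=4\varepsilon_i^{-2}\bar g$) so that the final time becomes a definite scale, choosing points $p_i$ where the $C^k$ deficit is attained, and splitting into the two cases $\varepsilon_i\to\varepsilon>0$ and $\varepsilon_i\to 0$: in the second (main) case the rescaled background balls converge to the Euclidean ball $\mathbb{B}^m(1)$ and the contradiction is obtained against $g_{\rm Euc}$, not against $\bar g$.

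A secondary point: even in the case $\varepsilon_i\to\varepsilon>0$, contradicting $\|g_i(\varepsilon_i^2)-\bar g\|_{C^k}\ge\bar\eta_1$ requires convergence to $\bar g$ \emph{as tensors at the points $p_i$}, not merely convergence modulo diffeomorphisms; the paper handles this by noting that the Gromov--Hausdorff approximations furnished by the distance distortion estimate are realized by the identity map, so the Cheeger--Gromov limit is compared with $\lim_i\bar h_i$ at $p=\lim_i p_i$. Your remark about ``the identity component of the isometry group'' does not address this gauge-fixing issue. To repair the proof, extract base points $p_i$ realizing the $C^k$ deficit, rescale as above, and treat the two cases separately; with those changes the rest of your outline (distance distortion plus Shi regularity upgrading pointed Gromov--Hausdorff to pointed smooth convergence) goes through as in the paper.
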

\begin{proof}[Proof of the claim]
We now prove this claim via a contradiction argument: if the theorem fails,
 we may find a sequence of smooth Riemannian metrics~$g_i$ on~$M$ and sequences
 of positive numbers $\alpha_i\to 0$ and $\delta_i\to 0$, such that
 $ \|g_i-\bar{g} \|_{C^0(M,\bar{g})} =\delta_i\to 0$ as $i\to \infty$,
 the Ricci flow solutions $g_i(t)$ satisfy
\begin{align}\label{eqn: Rm_bound_claim}
\forall\, t\le \varepsilon_i^2,\quad \|\Rm_{g_i(t)} \|_{C^0(M,g_i(t))}
\le \alpha_i t^{-1}.
\end{align}
for some $\varepsilon_i\in (0,1)$, but $\big\|g_i\big(\varepsilon_i^2\big)
-\bar{g}\big\|_{C^k(M,\bar{g})} \ge \bar{\eta}_1$ for all $i$ sufficiently
large.

From the contradiction hypothesis we may find points $p_i\in M$ such that for
all $i$ large enough,
\begin{align}\label{eqn: C3_contradiction}
\big|g_i\big(\varepsilon_i^2\big)-\bar{g}\big|_{C^k}(p_i) \ge \bar{\eta}_1.
\end{align}

On the other hand, by the curvature control (\ref{eqn: Rm_bound_claim}) and
Shi's estimates we have
\begin{align}\label{eqn: regularity}
\forall\, l\in \mathbb{N},\quad
\big\|\nabla^l\Rm_{g_i(\varepsilon_i^2)}\big\|_{C^0(M,g_i(\varepsilon_i^2))}
\le C_l \varepsilon_i^{-2-2l},
\end{align}
with $C_l>0$ depending only on $m$ for $l>1$, and $C_0=\alpha_i$.
Moreover, applying Lemma~\ref{lem: dis_dis} with~(\ref{eqn: Rm_bound_claim}) we
have the distance distortion estimate for any $x,y\in M$ with $d_{g_i}(x,y)\le
\varepsilon_i$:
\begin{align*}
\big|d_{g_i}(x,y)-d_{g_i(\varepsilon_i^2)}(x,y)\big| \le
\Psi_D'(\alpha_i|\bar{g})\varepsilon_i.
\end{align*}

We now rescale the metrics so that $\varepsilon_i\mapsto 2$, setting
$h_i(t):=4\varepsilon_i^{-2}g_i\big(\varepsilon_i^{-2} t\big)$ for $t\in
\big[0,\varepsilon_i^2\big]$ and $\bar{h}_i:=4\varepsilon_i^{-2}\bar{g}$, and
estimate for all $x,y\in M$ with $d_{h_i(0)}(x,y)\le 2$, that
\begin{align*}
 |d_{\bar{h}_i}(x,y)-d_{h_i(1)}(x,y) | & \le
 |d_{\bar{h}_i}(x,y)-d_{h_i(0)}(x,y) |
+ |d_{h_i(0)}(x,y)-d_{h_i(1)}(x,y) |\\
& \le \Psi(\delta_i)+\Psi_D'(\alpha_i|\bar{g}),
\end{align*}
which approaches $0$ as $i\to \infty$.

Consequently, the we have the pointed Gromov--Hausdorff distance estimate
\begin{align}\label{eqn: GH}
d_{p{\rm GH}}\left(B_{h_i(1)}(p_i, 1), B_{\bar{h}_i}(p_i, 1)
\right) < \Psi(\delta_i)+\Psi_D'(\alpha_i|\bar{g}),
\end{align}
and such Gromov--Hausdorff distance bounds are realized by the identity map.

Since $\{\varepsilon_i\}\subset [0,1]$ and $\{p_i\}\subset M$, by the
compactness we may pass to convergent sub-sequences, still denoted by
$\{\varepsilon_i\}$ and $\{p_i\}$, respectively. We denote
$\lim_i\varepsilon_i=\varepsilon$ and \mbox{$\lim_ip_i=p\!\in\! M$}. There are two
possibilities: either $\varepsilon_i\to\varepsilon>0$ or $\varepsilon=0$.

If $\varepsilon>0$, we notice that
$\bar{h}_i=4\varepsilon_i^{-2}\bar{g}\to 4\varepsilon^{-2}\bar{g}$ smoothly as
$i\to\infty$. Moreover, since $M$ is compact, the collection of geodesic balls
$ \{B_{\bar{h}_i}(p_i,1) \}$ has uniformly bounded geometry -- the
geometry is ultimately bounded by that of
$\overline{B_{4\varepsilon^{-2}\bar{g}}(p,2)}\subset M$. Consequently, there is
a smooth limit metric $\bar{h}_{\infty}$ such that as $i\to \infty$,
\begin{align*}
\big(B_{\bar{h}_i}(p_i,1),\bar{h}_i\big) \xrightarrow{p{\rm CG}}
\big(B_{4\varepsilon^{-2}\bar{g}}(p,1),\bar{h}_{\infty}\big).
\end{align*}
Here $p{\rm CG}$ means pointed Cheeger--Gromov (smooth) convergence. On the other hand,
by~(\ref{eqn: GH}), it is clear that as $i\to \infty$,
\begin{align*}
\big(B_{h_i(1)}(p_i,1),h_i(1)\big) \xrightarrow{p{\rm GH}}
\big(B_{4\varepsilon^{-2}\bar{g}}(p,1), \bar{h}_{\infty}\big).
\end{align*}
However, since the metrics $\{h_i(1)\}$ has uniformly controlled regularity~(\ref{eqn: regularity}), this last convergence must also be in the pointed
Cheeger--Gromov sense. Say, the limit metric $h_{\infty}$ is defined on
$B_{4\varepsilon^{-2}\bar{g}}(p,1)$ and we have $h_{\infty}\equiv
\bar{h}_{\infty}$. But the pointed Cheeger--Gromov convergence also implies, by~(\ref{eqn: C3_contradiction}), that{\samepage
\begin{align*}
 \big|h_{\infty}-\bar{h}_{\infty} \big|_{C^k}(p) = \lim_{i\to\infty}
 \big|h_i(1)-\bar{h}_i \big|_{C^k}(p_i) \ge \bar{\eta}_1,
\end{align*}
and this contradicts the conclusion $h_{\infty}\equiv \bar{h}_{\infty}$.}

If $\varepsilon=0$, since $\bar{h}_i$ is nothing but the metric $\bar{g}$ scaled
up, $M$ is compact and $\varepsilon_i\to 0$, it is clear that
$B_{\bar{h}_i}(p_i,1) \xrightarrow{p{\rm CG}} \mathbb{B}^m(1)$ as $i\to \infty$. By~(\ref{eqn: GH}), we then have $B_{h_i(1)}(p_i,1) \xrightarrow{p{\rm GH}}
\mathbb{B}^m(1)$ as $i\to \infty$. However, the uniform regularity control~(\ref{eqn: regularity}) improves the convergence also to pointed smooth
Cheeger--Gromov convergence, i.e., as $i\to \infty$ we in fact have
\begin{align*}
\big(B_{h_i(1)}(p_i,1),h_i(1)\big) \xrightarrow{p{\rm CG}} \big(\mathbb{B}^m(1),h_{\infty}\big),
\end{align*}
with the limit smooth metric $h_{\infty}\equiv g_{\rm Euc}$.
But by (\ref{eqn: C3_contradiction}) and (\ref{eqn: regularity}) we have
\begin{align*}
 \big|h_{\infty}-g_{\rm Euc} \big|_{C^k}(o) = \lim_{i\to\infty}
 \big|h_i(1)-\bar{h}_i \big|_{C^k}(p_i) \ge \bar{\eta}_1,
\end{align*}
which is a contradiction.
\end{proof}
We now return to the proof of the theorem. Fix the $\alpha$ and
$\delta=:\bar{\eta}_2$ determined by $\bar{g}$ through the claim,
then for any smooth Riemannian metric $g$ with $ \|g-\bar{g}
 \|_{C^0(M,\bar{g})}<\bar{\eta}_2$, we see that the evolved metric
$g\big(\varepsilon^2\big)\in B_{C^k(M,\bar{g})}(\bar{g},\bar{\eta}_1)$, and thus the
Ricci flow continuing from $g\big(\varepsilon^2\big)$ exists for all time and converges
to a Ricci-flat K\"ahler metric in $B_{C^k(M,\bar{g})}(\bar{g},\bar{\eta}_0)$.
\end{proof}

The proof of the first claim in Theorem~\ref{thm: main} is now a simple
combination of Theorems~\ref{thm: main2} and~\ref{thm: CFG}.
 \begin{proof}[Proof of Theorem~\ref{thm: main}(1)]
By Theorem~\ref{thm: CFG} we may find some $\bar{\eta}_3\in (0,\varepsilon(m))$
determined by $\bar{g}$ with $\Psi_{\rm CFG}(\bar{\eta}_3|m)\le
\frac{1}{2}\bar{\eta}_2$, and set $\beta(\bar{g}):= \bar{\eta}_3^{-1}|
M|_{\bar{g}}$. Since $\max_M\left|\mathbf{K}_{\bar{g}} \right|_{\bar{g}}\le 1$,
if we have $\beta(\bar{g})<1$, then the original work of Cheeger, Fukaya and
Gromov enables us to find an approximating metric $g' \in
B_{C^0(M,\bar{g})}\left(\bar{g}, \bar{\eta}_2\right)$ which is $(\rho,k)$-round
and compatible with a nilpotent Killing structure $\mathfrak{N}$ whose orbits
are of diameter less than $\bar{\eta}_2$. Now by Theorem~\ref{thm: main2} and
the fact that $g'\in B_{C^0(M,\bar{g})}(\bar{g}, \bar{\eta}_2)$, the Ricci flow
starting from $g'$ exists for all time and converges to a Ricci-flat K\"ahler
metic $g:=g'(\infty)\in B_{C^k(M,\bar{g})} (\bar{g},\bar{\eta}_0)$. Since the
Ricci flow is intrinsic, the infinitesimal isometries are preserved (i.e., it
preserves the Killing vector fields). Consequently, $\mathfrak{N}$ is still a
nilpotent Killing structure compatible with $g$, which is our desired metric.
\end{proof}

\subsection{Reduction of the diffeomorphism type}\label{section5.2}
In this sub-section we prove the second claim of Theorem~\ref{thm: main}.
Under the assumption that $\beta(\bar{g})<1$ and by the choice of
$\bar{\eta}_3$, we see that $M$ admits a nilpotent Killing structure
$\mathfrak{N}$. Notice that $\mathfrak{N}$ is compatible with $g$ in that
the local nilpotent group actions on $M$ are isometric with respect to $g$. In
fact, since $M$ is very collapsed with bounded diameter, the structure is
\emph{pure}, i.e., there is a single nilpotent Lie algebra $\mathfrak{n}$ such
that the germ of the acting Lie group at every point has its identity component
generated by $\mathfrak{n}$. Moreover, this gives us a singular Riemannian
submersion $f\colon M\to X$ over some collapsing limit space~$(X,d_X)$ whose
topologcial structure is described by Theorem~\ref{thm: singular_fibration}.

In fact, the nilpotent Killing structure $\mathfrak{N}$ determines a (singular)
Riemannian foliation $\mathcal{N}$ (see~\cite{Molino88}), defined by the
distribution of Killing vector fields tangent to the orbits of the structure~$\mathfrak{N}$. Clearly, the leaf of~$\mathcal{N}$ passing through~$p$ is~$\mathcal{O}(p)$, the orbit of~$p$ under the nilpotent Killing structure
$\mathfrak{n}$, and it is also a component of the fiber~$f^{-1}(f(p))$. Within
$\mathcal{N}$ we can define the central distribution~$\mathcal{F}$, consisting
of the center $C(\mathcal{N}_p)\trianglelefteq \mathcal{N}_p$ at every $p\in
M$. This defines another (singular) Riemannian foliation by the Frobinius
theorem -- in fact, this defines an $F$-structure $\mathfrak{F}$ \emph{a la}
Cheeger and Gromov~\cite{CGI, CGII}.

The leaf space of $\mathcal{N}$ (or equivalently the orbit space of
$\mathfrak{N}$) is isometric to the collapsing limit $(X,d_X)$, and the
leaf space of $\mathcal{F}$ is isometric to a metric space $(W,d_W)$. Recalling
our descriptions in Section~\ref{section2.1} (according to \cite[Theorem~0.5]{Fukaya88}), we have
$X=\tilde{\mathcal{R}}\sqcup \tilde{\mathcal{S}}$, where $\tilde{\mathcal{R}}$
is an open (incomplete) Riemannian orbifold, and $\tilde{\mathcal{S}}$ consists
of \emph{corner points}, i.e., those $x\in X$ with $\dim G_x>0$. For such an~$x$,
 $G_x$ has its non-trivial identity component as a torus, and more
 significantly, for any $p\in f^{-1}(x)$, the infinitesimal action of $G_x$ is
 contained in $\mathcal{F}_p=C(\mathcal{N}_p)$; see \cite[Lemma~5.1]{Fukaya88}.
 Let $\mathfrak{g}_p$ denote the Lie algebra of $G_x$ for any $p\in f^{-1}(x)$,
 then $\mathfrak{g}_p\cong \mathbb{R}^d$ as Lie algebras, with $d=0$ when $x\in
 \tilde{\mathcal{R}}$ and $d\in \left\{1,\ldots,m-\dim X\right\}$ when $x\in
 \tilde{\mathcal{S}}$.

Clearly, $\mathcal{N}_p$, the distribution $\mathcal{N}$ located at each $p\in
M$, is isomorphic to $\mathfrak{n}\slash \mathfrak{g}_p$ as Lie algebras, and
$\mathcal{F}_p$ is isomorphic to $C(\mathfrak{n})\slash \mathfrak{g}_p$ since
$\mathfrak{g}_p\trianglelefteq C(\mathfrak{n})$. Consequently, the foliation
$\mathcal{N}$ is a Riemannian foliation (i.e., non-singular) \emph{if and only
if} $\tilde{\mathcal{S}}=\varnothing$ (which is also equivalent to saying that the
nilpotent Killing structure $\mathfrak{N}$ is \emph{polarized}), and the same
conclusion holds for $\mathcal{F}$ and $\mathfrak{F}$. In fact, by
\cite[Theorem~0.5]{Fukaya88}, the possibly singular Riemannian foliations
$\mathcal{N}$ and $\mathcal{F}$ are always linearized; see~\cite{Molino88}. We
will denote $k'=\dim C(\mathfrak{n})\le m-\dim X$.

The $\mathfrak{N}$ (and thus $\mathfrak{F}$) invariant metric $g$, when
restricted to a leaf of $\mathcal{F}$, defines a non-negative definite
$2$-tensor field~$G$: for any two vector fields $\xi,\zeta \in
\Gamma(\mathcal{F},M)$, $G(\xi,\zeta):=g(\xi,\zeta)$, which is left
invariant along the leaves of $\mathcal{F}$. Consequently, the \emph{central
density}~-- $\det G$~-- is a~non-negative basic function for $\mathcal{F}$,
i.e., it descends to a non-negative function on $W$; see also~\cite{Foxy1908}.
Clearly, $\det G$ is smooth around the regular leaves of $\mathcal{F}$, i.e.,
those leaves whose tangents are isomorphic to $C(\mathfrak{n})$. By the
previous discussion on $\mathcal{F}_p$ for $p\in f^{-1}\big(\tilde{\mathcal{S}}\big)$,
we see that $\det G$ vanishes exactly on $f^{-1}\big(\tilde{\mathcal{S}}\big)$, i.e.,
$ \{p\in M\colon \det G(p)=0 \}=f^{-1}\big(\tilde{\mathcal{S}}\big)$. This is
because at $p\in f^{-1}\big(\tilde{\mathcal{S}}\big)$ we can extend $G$ by $0$ on
$\mathcal{F}_p\oplus \mathfrak{g}_p\cong C(\mathfrak{n})$; compare also
\cite[Theorem~0.6]{Fukaya87mGH}. Notice that $G$ induces a left invariant
Riemannian metric on the leaves of $\mathcal{F}$; but since $\mathcal{F}$
consists of commuting vector fields, $G$ is bi-invariant and actually flat
along the leaves of~$\mathcal{F}$.

In the formulas below, we will use the Roman letters $i$, $j$, $k$, $l$ to index the
coordinates along the leaf directions of $\mathcal{F}$, and for directions
perpendicular to a leaf of $\mathcal{F}$, we use the Greek letters
$\alpha$, $\beta$ as indices. Moreover, we employ the Einstein
summation convention, adding the repeated indices. As the basic function
$\det G$ is constant along the leaves of $\mathcal{F}$, if it were not a~constant throughout $M$, then $\max_M \ln \det G=\max_W \ln \det G$ is attained
at some $p\in f^{-1}\big(\tilde{\mathcal{R}}\big)$, since $\det G\ge 0$ and $\det
G|_{f^{-1}(\tilde{\mathcal{S}})}\equiv 0$. By the O'Neill's formula~\cite{ONeill} applied to the (singular) Riemannian foliation $\mathcal{F}$ in a
small enough open neighborhood around $p\in M$ and the flatness of the leaves,
we have some locally defined basic $1$-form $A_{\alpha\beta}^l$ such that
\begin{align}\label{eqn: ONeill_G}
\Rc_{ij} = -\tfrac{1}{2} ( G_{ij;\alpha\alpha}
-G_{ik,\alpha}G_{jk,\alpha} ) -\tfrac{1}{4}G^{kl}G_{kl,\alpha}
G_{ij,\alpha}+\tfrac{1}{4}G_{ik}G_{jl}A^k_{\alpha\beta}A^l_{\alpha\beta}.
\end{align}
Now tracing by $G^{ij}$ on the leaf directions and by the Ricci-flatness of the
metric on $M$, we see that
\begin{align}\label{eqn: detG}
\tfrac{1}{2}\Delta^{\perp}\ln \det G+\tfrac{1}{4}\big|\nabla^{\perp} \ln \det G\big|^2 = |A|^2.
\end{align}
Here $\nabla^{\perp}$ and $\Delta^{\perp}$ denote the derivatives taken
perpendicular to the leaf directions. However, since $\ln \det G(p)=\max_M \ln
\det G$, we must have $\Delta^{\perp}\ln\det G(p)<0$ and $\nabla^{\perp}\ln
\det G(p)=0$ -- this will contradict the non-negativity of the right-hand side
of~(\ref{eqn: detG}). Consequently, we know that $\det G$ is a positive
constant. This implies that $\tilde{\mathcal{S}}=\varnothing$, i.e.,
$X=\tilde{\mathcal{R}}$ is a Riemannian orbifold. Moreover, $\mathcal{F}$ is a~Riemannian foliation -- the orbit space~$W$ of $\mathcal{F}$ is
also a Reimannian orbifold.

Clearly, $G_{ij}$ descends to germs of smooth functions on the regular part of
$W$; if $w\in W$ is a~singularity, it can only be an orbifold point and we may
pull the corresponding quantities back to its local orbifold covering -- the
differentials of $G_{ij}$ are well-defined throughout~$W$. Notice that
$\nabla_W G_{ij}= \nabla^{\perp}G_{ij}$ and $\Delta_W G_{ij}=
\Delta^{\perp}G_{ij}$. Moreover, by~(\ref{eqn: detG}) the constancy of $\ln
\det G$ ensures that $|A|\equiv 0$ throughout $X$, and thus by~(\ref{eqn:
ONeill_G}),
\begin{align*}
\Delta_W G_{ij} = \Delta^{\perp}G_{ij} = G_{ik,\alpha}G_{jk,\alpha}.
\end{align*}
If we further check the O'Neill's formula for the regular part of $W$, the
vanishing of $\big|\nabla^{\perp}\ln \det G\big|$ and~$|A|$, together with
the Ricci-flatness of $M$ tell that
\begin{align}\label{eqn: Rc_Y}
 (\Rc_W )_{\alpha\beta} = \tfrac{1}{4}G_{ij,\alpha}G_{ij,\beta}.
 \end{align}
We notice that the quantity $|\nabla_W G|^2$ is a globally defined non-negative
smooth function on $W$. By the compactness of~$W$, if
$\left|\nabla_W G\right|^2\not\equiv 0$, then we have $\Delta_W |\nabla_W
G |^2(w)<0$ at the maximum point $w\in W$. On the other hand, since $\det
G$ is a constant, we may view the $ [G_{ij} ]$ as a matrix valued map $G\colon
U \to {\rm SL}(k',\mathbb{R}) \slash {\rm SO}(k')$, with $U\subset W$ being an open
 neighborhood of $w$ where we can write down $G$ in coordinates. We now notice
 that the codomain $S_{k'}:={\rm SL}(k',\mathbb{R}) \slash {\rm SO}(k')$ is a negatively
 curved symmetric space. Now by the Bochner formula, we can calculate at $w\in
 U$ to see
 \begin{align*}
\Delta_W |\nabla_W G |^2(w)& =
2 |\nabla_W\nabla_W G |^2(w)+
 (\Rc_W )_{\alpha\beta}G_{ij,\alpha}G_{ij,\beta}(w)
- (G^{\ast}\Rm_{S_{k'}} )_{\alpha\beta\beta\alpha}(w)\\
 & >\tfrac{1}{4} |\nabla_W G |^4(w) > 0,
 \end{align*}
 which is impossible. Therefore, we see the tensor $\nabla^{\perp}G\equiv 0$
 on~$M$. Consequently, we see that~(\ref{eqn: Rc_Y}) reduces to
 $\Rc_W \equiv 0$. Moreover, as $\nabla^{\perp}G$ stands for the second
 fundamental form of the leaves of $\mathcal{F}$, its vanishing tells that the
 the leaves of $\mathcal{F}$ are totally geodesic. So the Riemannian
 metric $g$ locally splits, and by the left invariance of $g$ with respect to
 $\mathfrak{N}$, this implies the splitting of the nilpotent Killing structure,
 i.e., passing through each point $p\in M$, we have the splitting of Lie algebras
 $\mathcal{N}_p=\mathcal{F}_p\oplus \mathcal{N}_p'$ for some nilpotent Lie
 algebra $\mathcal{N}_p'$. This however leads to $\mathcal{F}=\mathcal{N}$:
 otherwise, $\mathcal{N}_p'\not= 0$ at some $p\in M$, and by the nilpotency it
 has to have a~\emph{non-trivial} center $C(\mathcal{N}_p')$, but the above
 splitting shows that $\mathcal{F}_p =C(\mathcal{N}_p)=\mathcal{F}_p\oplus
 C(\mathcal{N}_p')$, which is absurd. Consequently, we have
 $\mathcal{F}=\mathcal{N}$, $X\equiv W$ as Ricci-flat orbifolds, and that
 $M$ fibers over $X$ by flat tori with totally geodesic fibers (the leaves of~$\mathcal{N}$).

\subsection*{Acknowledgements}
The second author was partially supported by NSFC Grant 11821101, Beijing
Natural Science Foundation Z19003, and a research fund from Capital Normal
University. The third author is partially supported by the General Program of
the National Natural Science Foundation of China (Grant No.~11971452) and a
research fund of USTC. The authors would like to thank anonymous referees for
their careful proofreading and helpful comments of the paper.

\pdfbookmark[1]{References}{ref}
\LastPageEnding

\end{document}